\newtheorem{lemma}{Lemma}[section]
\newtheorem{theorem}[lemma]{Theorem}
\newtheorem{proposition}[lemma]{Proposition}
\newtheorem{corollary}[lemma]{Corollary}
\theoremstyle{definition}
\newtheorem{definition}[lemma]{Definition}
\newtheorem*{remark}{Remark}
\numberwithin{equation}{section}
\newcommand{\comment}[1]{}
\newcommand{\Z}{{\mathbb Z}}
\newcommand{\R}{{\mathbb R}}
\newcommand{\N}{{\mathbb N}}
\newcommand{\Deg}{{\mathrm {deg}}}
\newcommand{\av}[1]{\left\vert #1\right\vert}
\newcommand{\aV}[1]{\left\Vert #1\right\Vert}
\newcommand{\aVd}{\left\Vert \cdot\right\Vert}
\newcommand{\Hm}[1]{\leavevmode{\marginpar{\tiny%
$\hbox to 0mm{\hspace*{-0.5mm}$\leftarrow$\hss}%
\vcenter{\vrule depth 0.1mm height 0.1mm width \the\marginparwidth}%
\hbox to 0mm{\hss$\rightarrow$\hspace*{-0.5mm}}$\\\relax\raggedright
#1}}}
\begin{document}
\title[Recurrent and (strongly) resolvable graphs]{Recurrent and (strongly) resolvable graphs}

\author[Lenz]{Daniel Lenz}
\address{D. Lenz, Institut für Mathematik \\Friedrich-Schiller-Universität Jena \\07743 Jena, Germany
} \email{daniel.lenz@uni-jena.de}

\author[Puchert]{Simon Puchert}
\address{S. Puchert, Institut für Mathematik \\Friedrich-Schiller-Universität Jena \\07743 Jena, Germany } \email{simon.puchert@uni-jena.de}

\author[Schmidt]{Marcel Schmidt}
\address{M. Schmidt, Mathematisches Institut \\ Universität Leipzig \\04109 Leipzig, Germany } \email{marcel.schmidt@math.uni-leipzig.de}

\begin{abstract}
We develop a new  approach to    recurrence and the existence
of non-constant harmonic functions on infinite weighted graphs. The
approach is   based on the capacity of subsets of metric boundaries
with respect to intrinsic metrics. The main tool is a connection
between polar sets in such boundaries and null sets of paths. This
connection relies on suitably diverging functions of finite energy.
\end{abstract}


\maketitle
\section{Introduction}

Potential theory on infinite weighted graphs (sometimes called networks) studies the induced graph energy functional and derived quantities (e.g. harmonic functions, random walks, resistances, Laplacians,...). As already noted in the pioneering works by Yamasaki \cite{Yam75,NY,Yam77}, so-called {\em null sets} of infinite paths play an important role in this theory. For a comprehensive account of Yamasaki's work (and beyond) we refer to the book \cite{Soa}.

Another approach to potential theory on weighted graphs is via  Dirichlet forms. The graph energy induces a Dirichlet form, which in turn leads to the notion of capacity of sets. In this language sets of capacity zero, so-called {\em polar sets}, are key to understanding properties of the Dirichlet form. In contrast to the situation on graphs, for general Dirichlet forms the concept of a path may be meaningless. Instead, for many applications it turned out fruitful to  consider intrinsic metrics  as geometric input, see e.g.  \cite{Stu1, Stu2, Stu3, FLW,Kel}.

In this paper we follow the intrinsic metric line of thinking. Our
main observation relates polar sets in certain metric boundaries of
graphs to null sets of paths, see Theorem~\ref{theorem:capacity vs
paths}. This sheds a new light on classical results formulated in
terms of null sets of paths and  allows us to prove new theorems. In
this text we focus on consequences for  recurrence and the existence
of non-constant harmonic functions of finite energy:

We prove a new characterization of recurrence in terms of the 
existence of  intrinsic metrics with finite balls, see Theorem~\ref{recurrence}. This in turn leads to an alternative proof of a
classical characterization of recurrence due to Yamasaki, see
Corollary~\ref{coro:yamasaki} and a characterization of recurrence
in terms of metric boundaries being polar, see
Corollary~\ref{coro:polarity boundary} and
Theorem~\ref{cap-recurrence}.

We then turn to   the problem of  the existence of
non-constant harmonic functions. Again, we tackle this problem by
means of capacity on the boundary. Specifically, we introduce the
notion of strong resolvability, which is essentially a path-free and
capacity-based version of the notion of resolvability studied in
\cite{BS}. We show that strongly resolvable transient graphs admit
non-constant harmonic functions of finite energy, see
Corollary~\ref{harmonic} and Corollary~\ref{coro:existence of harmonic
funcitons}.  Since strong resolvability is stronger than
resolvability (this is a consequence of our main observation
mentioned previously), we also prove that locally finite planar
graphs of bounded geometry, the main class of examples of resolvable
graphs, are even strongly resolvable. This allows us to recover one
of the main results from \cite{BS} that transient locally finite
planar graphs of bounded geometry have non-constant harmonic
functions of finite energy. Note that recently more precise
descriptions of the space of harmonic functions of planar graphs
were obtained, see \cite{ABGM,Geo,HP}, which are beyond the scope of
our theory. Another consequence of planar graphs of bounded geometry
being strongly resolvable is  that they are never canonically
compactifiable, see Theorem~\ref{theorem:not canonically compactifiable}. The latter class of graphs was introduced and
studied in \cite{GHKLW}.

Our  results show that two basic issues in the theory of
recurrence viz characterization of  recurrence and existence of
non-constant harmonic functions can naturally be understood in terms
of capacities of (suitable) metric boundaries: Recurrence means that
metric boundaries are negligible in the sense of having capacity
zero whereas existence of non-constant harmonic functions is implied
by some richness in the structure of such a boundary in the sense of
the positive capacity of the whole boundary not being concentrated
on a single point.

Since our methods do not rely on paths but only on intrinsic
metrics, they are not limited to locally finite graphs as is
sometimes the case in the classical setting. Moreover, they can be
adapted to more general Dirichlet spaces and even non-linear
energies. Both directions will be investigated in upcoming works.

Parts of this text are based on Simon Puchert's master's thesis.

{\bf Acknowledgments:} Partial support of DFG, in particular,
within the Priority programme 'Geometry at infinity'  is gratefully
acknowledged.


%

\section{Preliminaries}
In this section we introduce the notation and the objects that are used throughout the text.  For $a,b \in \R$ we let $a \wedge b = \min\{a,b\}$ and $a \vee b = \max\{a,b\}$. Moreover, $a_+ = a \vee 0$ and $a_- = (-a)_+$. We extend this notation pointwise to real-valued functions.


\subsection{Graphs and Dirichlet energy} \label{subsection:graphs and intrinsic metrics}
Our study of graphs is based on an analytic tool given by the
Dirichlet energy.


A \emph{graph} $G=(X,b)$ consists of a nonempty countable set $X$,
whose elements are called \emph{nodes} or \emph{vertices}, and a
symmetric \emph{edge weight function} $b: X\times X \rightarrow
[0,\infty)$, satisfying the following conditions: The edge weight
$b$ vanishes on the diagonal, i.e. $b(x,x) = 0$ for all $x\in X$,
and the \emph{weighted vertex degree}
$$ \mathrm{deg}(x) := \sum\limits_{y\in X} b(x,y)$$
must be finite for all $x\in X$.  If the weighted vertex degree is
bounded, we say that the graph has \emph{bounded geometry}. 
If the function $b$ takes only values in $\{0,1\}$, the graph
$(X,b)$ is called  \emph{combinatorial}.

Two vertices $x,y \in X$ are said to be connected by the \emph{edge}
$(x,y)$ if $b(x,y) > 0$. In this case, we write $x \sim y$.
Note that since $b$ is symmetric, we have $x \sim y$ if and only if
$y \sim x$.  The set of all (oriented) edges of $G$ is denoted by
$$E := \{(x,y)  \in X \times X \mid x \sim y\}.$$
We say that a graph is \emph{locally finite} if for all $x\in X$ the
{\em set of its neighbors} $\{y \in X\mid x\sim y\}$ is finite. A
locally finite graph is called a \emph{bounded valence graph} if the
cardinality of the set of neighbors of each vertex is bounded by a
universal constant.


A \emph{path} in $G$ is a (finite or infinite) sequence
$(x_1,x_2,\ldots)$ of nodes such that $x_i \sim x_{i+1}$, for $i =
1,2,\ldots$. We say that two points $x,y\in X$ are {\em connected}
if there is a finite path $(x=x_1,\ldots,x_n=y)$. This defines an
equivalence relation on the set of vertices and the resulting
equivalence classes are called \emph{connected components}. From now
on we will \textbf{generally assume that the graph $G$ is
connected without mentioning this explicitly.} This is not a real
restriction because  all our considerations in this text can be
reduced to connected components.

We equip $X$ with the discrete topology and write $C(X)$ for all
real-valued functions on $X$ (the continuous functions on $X$) and
$C_c(X)$ for the finitely supported real-valued functions on $X$ (the continuous functions of compact support).
Any function $m:X \to (0,\infty)$ induces a Radon measure of full
support on all subsets of $X$ via
$$m(A) := \sum_{x \in A} m(x),\quad A \subseteq X.$$
In what follows we do not distinguish between such measures and strictly positive functions and simply call them \emph{measures} on $X$.

Every weighted graph $G = (X,b)$ gives rise to a quadratic form $Q \colon C(X)\rightarrow [0,\infty]$ that assigns to any function $f \colon X\rightarrow \R$ its \emph{Dirichlet energy}
$$ Q(f) := \frac{1}{2}\sum\limits_{x,y\in X} b(x,y)|f(x)-f(y)|^2. $$
The space of {\em functions of finite energy} is
$$\mathfrak{D}(G) := \{f\colon X\rightarrow \mathbb{R} \mid Q(f) < \infty\}, $$
on which $Q$ acts as a bilinear form by polarization, namely
$$ Q(f,g) = \frac{1}{2}\sum\limits_{x,y\in X} b(x,y)(f(x)-f(y)) (g(x)-g(y)). $$
Here we abuse notation so that $Q(f) = Q(f,f)$ for every $f
\in \mathfrak{D}(G)$.

The form $Q$ has the following fundamental semi-continuity
property,  which is a direct consequence of Fatou's lemma.

\begin{proposition}[Semicontinuity of $Q$] Let $(X,b)$ be a graph.
Let $(f_n)$ be a sequence of functions on $X$ converging pointwise
to the function $f$. Then,
$$Q(f)\leq \liminf_{n\to\infty} Q(f_n),$$
where the value $\infty$ is allowed.
\end{proposition}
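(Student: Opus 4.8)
The plan is to prove the semicontinuity of $Q$ directly from Fatou's lemma, as the statement itself suggests. The key observation is that $Q(f)$ is a sum (over pairs of vertices) of nonnegative terms, and each individual term behaves nicely under pointwise convergence.

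First I would fix a sequence $(f_n)$ converging pointwise to $f$, and consider each summand. For a fixed pair $(x,y) \in X \times X$, pointwise convergence $f_n(x) \to f(x)$ and $f_n(y) \to f(y)$ gives
$$
b(x,y)\av{f(x)-f(y)}^2 = \lim_{n\to\infty} b(x,y)\av{f_n(x)-f_n(y)}^2,
$$
simply because the map $(s,t)\mapsto b(x,y)|s-t|^2$ is continuous. In particular the liminf of the right-hand side equals this limit.

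Next I would interpret the sum over $x,y \in X$ as an integral with respect to counting measure on the countable set $X \times X$. Since $X$ is countable, $X \times X$ is countable, and the nonnegative functions $g_n \colon X\times X \to [0,\infty)$ defined by $g_n(x,y) = \frac12 b(x,y)\av{f_n(x)-f_n(y)}^2$ are measurable with respect to the counting measure. Fatou's lemma for the counting measure then yields
$$
\int_{X\times X} \liminf_{n\to\infty} g_n \,\le\, \liminf_{n\to\infty} \int_{X\times X} g_n.
$$
The left-hand integral is exactly $\sum_{x,y} \liminf_n g_n(x,y) = \sum_{x,y}\frac12 b(x,y)\av{f(x)-f(y)}^2 = Q(f)$ by the pointwise computation above, while the right-hand side is $\liminf_{n\to\infty} Q(f_n)$. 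This gives the claimed inequality, with the value $\infty$ allowed throughout since all quantities are nonnegative.

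There is no serious obstacle here: the only point to be slightly careful about is that Fatou's lemma requires nonnegativity of the integrands, which holds since $b \ge 0$ and each summand is a square, and that the pointwise liminf of the summands is genuinely the limit (hence equals the value built from $f$). Everything reduces to applying Fatou's lemma to the counting measure on the countable index set $X \times X$, so I would expect the proof to be a single short paragraph.
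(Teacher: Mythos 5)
Your proof is correct and takes exactly the approach the paper intends: the paper gives no detailed argument, stating only that the proposition "is a direct consequence of Fatou's lemma," and your application of Fatou's lemma to the nonnegative summands $g_n(x,y) = \tfrac12 b(x,y)|f_n(x)-f_n(y)|^2$ viewed as functions on $X\times X$ with counting measure is the standard way to make that precise.
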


A map $C \colon \mathbb{R}\rightarrow\mathbb{R}$ is called  \textit{contraction} if $|C(s) - C(t)|\leq |s-t|$ holds for all $s,t\in \mathbb{R}$.  The Dirichlet energy has the important property that it is reduced by contractions. Specifically, the following proposition is a direct consequence of the definition.

\begin{proposition}[Fundamental contraction property]\label{theorem:fundamental contraction property}
Let $(X,b)$ be a graph. For all $f\in\mathfrak{D}(G)$ and all contractions $C \colon \mathbb{R}\rightarrow\mathbb{R}$ we have $C\circ f \in \mathfrak{D}(G)$ and
$$ Q(C\circ f) \leq Q(f). $$
\end{proposition}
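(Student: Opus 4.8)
The plan is to prove the fundamental contraction property directly from the definition of $Q$, exploiting the contraction inequality $|C(s)-C(t)| \le |s-t|$ termwise in the defining sum. The statement has two parts: first, that $C\circ f$ again has finite energy (so that $C \circ f \in \mathfrak{D}(G)$), and second, the energy inequality $Q(C\circ f) \le Q(f)$. Both will follow from a single pointwise estimate.

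\medskip

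First I would fix $f \in \mathfrak{D}(G)$ and a contraction $C$, and write out the defining sum for $Q(C\circ f)$, namely
$$ Q(C\circ f) = \frac{1}{2}\sum_{x,y\in X} b(x,y)\,|C(f(x)) - C(f(y))|^2. $$
For each ordered pair $(x,y)$ I would apply the contraction hypothesis with $s = f(x)$ and $t = f(y)$, giving $|C(f(x)) - C(f(y))| \le |f(x) - f(y)|$, and hence, after squaring (both sides are nonnegative),
$$ |C(f(x)) - C(f(y))|^2 \le |f(x) - f(y)|^2. $$
Multiplying by the nonnegative weight $b(x,y)$ preserves the inequality termwise.

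\medskip

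Next I would sum these pointwise inequalities over all $x,y \in X$. Since every summand on the left is dominated by the corresponding summand on the right and all terms are nonnegative, the series for $Q(C\circ f)$ is bounded above term-by-term by the series for $Q(f)$; monotonicity of summation of nonnegative terms then yields
$$ Q(C\circ f) = \frac{1}{2}\sum_{x,y\in X} b(x,y)\,|C(f(x)) - C(f(y))|^2 \le \frac{1}{2}\sum_{x,y\in X} b(x,y)\,|f(x) - f(y)|^2 = Q(f). $$
Because $f \in \mathfrak{D}(G)$ means $Q(f) < \infty$, this inequality simultaneously shows $Q(C\circ f) < \infty$, i.e. $C\circ f \in \mathfrak{D}(G)$, and establishes the desired energy bound, so both assertions follow at once.

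\medskip

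I do not anticipate a genuine obstacle here, as the proposition is essentially immediate from the definition: the only point requiring any care is the bookkeeping of summing a family of nonnegative terms indexed by $X \times X$, where one should note that the monotonicity $\sum a_{xy} \le \sum c_{xy}$ for $0 \le a_{xy} \le c_{xy}$ holds for arbitrary (possibly infinite) nonnegative sums valued in $[0,\infty]$, so no convergence issue arises before the final bound. The contraction property is used purely at the level of a single pair, and everything else is monotonicity of the sum.
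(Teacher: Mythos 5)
Your proof is correct and follows exactly the argument the paper has in mind: the paper states this proposition without proof, calling it ``a direct consequence of the definition,'' and your termwise application of the contraction inequality followed by monotonicity of nonnegative sums is precisely that direct consequence, carried out carefully.
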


\begin{remark}  Two useful families of contractions that will be of use later are the following: For $a,b\in\R$ we define the \emph{clamping function} $C_{[a,b]} \colon \R \to  \R$   by
 $$C_{[a,b]}(x) := (x \wedge b) \vee a$$
 and for $c\in\R$ we define the \emph{slicing function} $S_c \colon \R \to  \R $ by
 $$S_c(x) := (x - c)_+ \wedge 1.$$
\end{remark}
 For $o\in X$ we define the (pseudo-)norm $\aVd_o \colon \mathfrak{D}(G) \to [0,\infty)$ by
 $$\aV{f}_o := \sqrt{Q(f) + |f(o)|^2}.$$
 We denote the  closure of the space of functions of compact support $C_c(X)$ with respect to $\aVd_o$  by $\mathfrak{D}_0(G)$. The following well-known lemma shows that $\aVd_o$ is indeed a norm and that the space $\mathfrak{D}_0(G)$ does not depend on the choice of $o\in X$, see e.g. \cite[Proposition~1.6]{Schmi1}. Its proof relies on the  connectedness of $G$, which we always assume in this text (see above).
\begin{lemma}\label{lemma:properties of energy space}
 Let $G = (X,b)$ be a graph and let $o\in X$.
 \begin{enumerate}[(a)]
  \item $\aVd_{o}$ is a norm and $(\mathfrak{D}(G),\aVd_{o})$ is a Banach space.
  \item $f_n \to f$ with respect to $\aVd_{o}$ implies $f_n \to f$ pointwise.
  \item For $o' \in X$ the norms $\aVd_{o}$ and $\aVd_{o'}$ are equivalent.
 \end{enumerate}
%
%
\end{lemma}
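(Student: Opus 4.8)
The plan is to prove each of the three statements (a), (b), (c) in turn, with the key analytic input being the pointwise domination of $Q$ by the full norm $\aVd_o$. The central inequality I would establish first is a \emph{Poincaré-type estimate}: for any $x \in X$ there is a constant $c_x > 0$ (depending on $o$, $x$, and the graph) such that $|f(x)| \le c_x \aV{f}_o$ for all $f \in \mathfrak{D}(G)$. To get this, I would use connectedness: fix a finite path $(o = x_0, x_1, \ldots, x_n = x)$ joining $o$ to $x$. Telescoping gives $|f(x) - f(o)| \le \sum_{i=1}^n |f(x_i) - f(x_{i-1})|$, and since each consecutive pair satisfies $b(x_{i-1}, x_i) > 0$, each increment is controlled by $|f(x_i) - f(x_{i-1})| \le b(x_{i-1},x_i)^{-1/2}\bigl(b(x_{i-1},x_i)|f(x_i)-f(x_{i-1})|^2\bigr)^{1/2} \le b(x_{i-1},x_i)^{-1/2}\sqrt{2Q(f)}$. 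Summing and adding $|f(o)|$ yields $|f(x)| \le c_x \aV{f}_o$ with $c_x$ built from the path weights. This single estimate is the engine behind all three parts.

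With the Poincaré estimate in hand, I would prove (a). That $\aVd_o$ is a seminorm follows immediately since $Q^{1/2}$ is a seminorm (the bilinear form $Q$ is positive semidefinite, as its defining sum is a sum of squares) and $|\cdot(o)|$ is a seminorm; the triangle inequality passes through the square-root-of-sum-of-squares. Definiteness is exactly where the Poincaré estimate enters: if $\aV{f}_o = 0$, then $Q(f) = 0$ and $f(o) = 0$, whence $|f(x)| \le c_x \aV{f}_o = 0$ for every $x$, so $f \equiv 0$. For completeness, given a $\aVd_o$-Cauchy sequence $(f_n)$, the estimate $|f_n(x) - f_m(x)| \le c_x \aV{f_n - f_m}_o$ shows $(f_n(x))_n$ is Cauchy in $\R$ for each $x$, hence converges pointwise to some $f$. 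Lower semicontinuity of $Q$ (the first proposition) gives $Q(f) < \infty$, so $f \in \mathfrak{D}(G)$, and a second application of semicontinuity to the differences $f_n - f_m$ shows $\aV{f_n - f}_o \to 0$.

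Statement (b) is then essentially a restatement of the Poincaré estimate: $\aV{f_n - f}_o \to 0$ forces $|f_n(x) - f(x)| \le c_x \aV{f_n - f}_o \to 0$ for each fixed $x$, which is precisely pointwise convergence. For (c), equivalence of the norms $\aVd_o$ and $\aVd_{o'}$, I would note that $\aV{f}_{o'}^2 = Q(f) + |f(o')|^2$ and bound $|f(o')|^2 \le c_{o'}^2 \aV{f}_o^2$ using the Poincaré estimate centered at $o$; combined with the trivial bound $Q(f) \le \aV{f}_o^2$ this gives $\aV{f}_{o'} \le C\aV{f}_o$ for a suitable constant, and the reverse inequality follows by symmetry of the argument.

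The main obstacle, and the only place where a genuine idea is required rather than routine manipulation, is the Poincaré estimate itself — specifically the reliance on \emph{connectedness} to produce a finite path from $o$ to every vertex $x$, which is exactly the hypothesis the excerpt flags as essential. Everything else (the seminorm axioms, completeness via semicontinuity, the pointwise-convergence consequence) is a mechanical unwinding once that pointwise bound is secured. One should be mildly careful that the constant $c_x$ depends on the chosen path and hence is not uniform in $x$, but this is harmless: all three claims are about fixed vertices or fixed reference points, so pointwise (non-uniform) control suffices throughout.
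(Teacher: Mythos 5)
Your proof is correct, and all three parts follow cleanly from your Poincar\'e-type estimate $|f(x)|\le c_x\aV{f}_o$ obtained by telescoping along a finite path from $o$ to $x$. The paper itself gives no proof of this lemma --- it cites \cite[Proposition~1.6]{Schmi1} and only remarks that the argument relies on connectedness --- and your route (path telescoping for the pointwise bound, then lower semicontinuity of $Q$ for completeness) is exactly the standard argument behind that reference, so your approach matches the intended one.
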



A measure $m$ on $X$ induces the Hilbert space
$$\ell^2(X,m) = \{f\in C(X)\mid \sum\limits_{x\in X} m(x)|f(x)|^2 < \infty\}$$
with inner product
$$ \langle f,g\rangle_m = \sum\limits_{x\in X} m(x) f(x) g(x) $$
and corresponding norm $\aVd_m$. We denote by  $H^1(G,m) := \ell^2(X,m)\cap \mathfrak{D}(G)$ the corresponding {\em first-order Sobolev space}. Equipped with the inner product
$$ \langle f,g\rangle_{Q,m} := \langle f,g\rangle_{m} + Q(f,g) $$
it is a Hilbert space. We denote the associated norm by $\aVd_{Q,m}$.
For a proof of the completeness of this space we refer the reader to \cite[Section~1.3]{Schmi1}, where the completeness of $H^1(G,m)$ is discussed as  closedness of the Dirichlet form $Q^{(N)}$ (in the notation used there).

\subsection{Intrinsic metrics}
In this section we introduce the (pseudo)metrics relevant for our
considerations and discuss their properties.

A symmetric function $\sigma:X \times X \to [0,\infty)$ is called a
\emph{pseudometric} if it satisfies the triangle inequality, i.e.
if for all $x,y,z \in X$ it satisfies
$$\sigma(x,y) \leq \sigma(x,z) + \sigma(z,y).$$
For $r \geq 0$ and $x \in X$ we denote the corresponding ball of radius $r$ around $x$ by
$$B_r^\sigma(x) := \{y \in X \mid \sigma(x,y) \leq r\}.$$
For $x \in X$ the distance $\sigma_U$ from a nonempty subset $U\subseteq X$ is defined by
$$\sigma_U(x) := \sigma(x, U) := \inf\limits_{y\in U} \sigma(x,y)$$
and the \emph{diameter} of $U$ with respect to $\sigma$ is
$$\mathrm{diam}_\sigma(U) := \sup\limits_{x,y \in U} \sigma(x,y).$$
A function $f$ on $X$ is called \textit{Lipschitz-function} with respect to the pseudometric $\sigma$ if there exists a $C>0$ with $|f(x) - f(y)|\leq C \sigma(x,y)$ for all $x,y\in X$. We then also say that $f$ is a $C$-Lipschitz function.  The set of all Lipschitz-functions with respect to $\sigma$ is denoted by ${\rm Lip}_\sigma(X)$.

For a graph $G = (X,b)$ a pseudometric $\sigma$ is called \emph{intrinsic} with respect to the measure $m$ if for all $x \in X$ it satisfies
$$\frac{1}{2}\sum_{y \in X} b(x,y)\sigma(x,y)^2 \leq m(x).$$

We write $ \mathfrak{M}(G) $ for the set of pseudometrics that are
intrinsic with respect to a finite measure. Clearly, a
pseudometric $\sigma$ belongs to $\mathfrak{M}(G)$ if and only if
$$\sum_{x,y \in X} b(x,y)\sigma(x,y)^2 <\infty$$
holds.

\begin{remark}[Background on intrinsic metrics]
Intrinsic metrics have long proven to be a useful tool in spectral 
geometry of manifolds and, more generally, for strongly local
Dirichlet spaces, see e.g. Sturm's seminal work \cite{Stu1,Stu2}.
For general Dirichlet spaces, including graphs, a systematic
approach was developed in \cite{FLW}. A key point in \cite{FLW} is a
Rademacher type theorem. In the  context of graphs  this theorem
says that a pseudometric $\sigma$ is intrinsic if and only if for
all $1$-Lipschitz functions $f \colon X \to \R$ with respect to
$\sigma$ we have $|\nabla f|^2 \leq 1$. Here, for  $f \in C(X)$ and
$x \in X$ the quantity
 $$|\nabla f|^2 (x): = \frac{1}{2m(x)}\sum_{y \in X} b(x,y)(f(x) - f(y))^2$$
can be interpreted as the square of the norm of the discrete
gradient of $f$ at $x$ (with respect to the measure $m$). For graphs
with measure $m$ for which the scaled degree $\deg /m$ is uniformly bounded the combinatorial metric is an
intrinsic metric (up to a constant). For graphs with unbounded degree
this is not the case anymore. For such graphs,  intrinsic metrics
(rather than the combinatorial metric) have  turned out to be the
right metrics for  various questions, see e.g. the survey
\cite{Kel}. The present article can also be seen as a point in case.
\end{remark}

There are strong ties  between functions of finite Dirichlet energy and intrinsic pseudometrics with respect to a finite measure. These will be of relevance for some of our theorems below.

\begin{lemma}[From $ \mathfrak{M}(G)$  to $ \mathfrak{D}(G)$]
\label{potential} Let $G = (X,b)$ be a graph. Let  $\sigma$ be  an
intrinsic pseudometric with respect to the finite measure $m$. Let
$U$ be a subset of $X$. Then, the following statements hold:
\begin{enumerate}[(a)]

\item  Any function $f$ that is  $C$-Lipschitz with respect to $\sigma$ and constant  on $U$ satisfies
$$Q(f)\leq  C^2 \min\{m(X),2 m(X\setminus U)\}.$$
\item The inequality
 $$ Q(\sigma_U) \leq \min\{m(X), 2 m(X\setminus U)\} $$
 is valid and, in particular,
$\sigma_U $ belongs to $ \mathfrak{D}(G)$.
%
%

\item The inequality  $$Q(f)\leq C^2 m(X)$$ holds for any $C$-Lipschitz function $f$
with respect to $\sigma$. In particular, any Lipschitz function with
respect to $\sigma$ belongs to $\mathfrak{D}(G)$.
\end{enumerate}
\end{lemma}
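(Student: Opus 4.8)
The plan is to prove the three statements in the order (c), (a), (b), since the later parts build on the earlier estimates. The common starting point is to sum the intrinsic condition over all vertices: since $\frac{1}{2}\sum_{y} b(x,y)\sigma(x,y)^2 \leq m(x)$ for each $x$, summing over $x \in X$ gives $\frac{1}{2}\sum_{x,y} b(x,y)\sigma(x,y)^2 \leq m(X)$, which is finite because $m$ is a finite measure. This single inequality is the engine behind all three parts.

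For (c), I would insert the defining Lipschitz estimate $|f(x) - f(y)|^2 \leq C^2 \sigma(x,y)^2$ directly into the energy sum, so that $Q(f) \leq \frac{C^2}{2}\sum_{x,y} b(x,y)\sigma(x,y)^2 \leq C^2 m(X)$. This yields both the stated bound and membership in $\mathfrak{D}(G)$ at once.

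For (a) the bound $Q(f) \leq C^2 m(X)$ is already part (c), so it remains to establish $Q(f) \leq 2 C^2 m(X\setminus U)$. Writing $V := X\setminus U$, the key observation is that, since $f$ is constant on $U$, every pair $(x,y)$ with $x,y \in U$ contributes zero to the energy, so only pairs meeting $V$ survive. Using the symmetry of $b$ and of the summand, I would cover the surviving pairs by $\{x \in V\}$ together with $\{y \in V\}$ and then discard the (nonnegative) contribution of pairs with both endpoints in $V$, obtaining $2 Q(f) \leq 2 \sum_{x \in V}\sum_{y \in X} b(x,y)|f(x)-f(y)|^2$. Applying the Lipschitz estimate and then the intrinsic condition in the form $\sum_y b(x,y)\sigma(x,y)^2 \leq 2 m(x)$ gives $2 Q(f) \leq 2 C^2 \sum_{x \in V} 2 m(x) = 4 C^2 m(V)$, i.e. $Q(f) \leq 2 C^2 m(V)$, and combining with (c) produces the stated minimum. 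The step to get right here is the inclusion–exclusion bookkeeping: the overlap of $\{x \in V\}$ and $\{y \in V\}$ is exactly the pairs inside $V$, and it is precisely discarding this overlap (rather than double counting it) that produces the correct factor of $2$; this is the one place where a careless estimate would cost the sharp constant.

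Finally, (b) follows by applying (a) to $f = \sigma_U$ with $C = 1$, after verifying the two hypotheses. The function $\sigma_U$ is $1$-Lipschitz with respect to $\sigma$: for any $z \in U$ the triangle inequality gives $\sigma_U(x) \leq \sigma(x,z) \leq \sigma(x,y) + \sigma(y,z)$, and taking the infimum over $z \in U$ yields $\sigma_U(x) - \sigma_U(y) \leq \sigma(x,y)$ (and symmetrically). Moreover $\sigma_U$ is constant, equal to $0$, on $U$, since $\sigma(x,x) = 0$ forces $\sigma_U(x) = 0$ for $x \in U$. Hence (a) gives $Q(\sigma_U) \leq \min\{m(X), 2 m(X\setminus U)\}$, and finiteness of $m$ then yields $\sigma_U \in \mathfrak{D}(G)$.
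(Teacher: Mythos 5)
Your proof is correct and follows essentially the same route as the paper: insert the Lipschitz bound into the energy and sum the intrinsic condition for the $m(X)$ estimate, and restrict the sum to pairs meeting $X\setminus U$ (your union bound over $\{x\in V\}\cup\{y\in V\}$ versus the paper's disjoint splitting of $X^2\setminus U^2$ is only a bookkeeping difference, yielding the same factor $2$). The paper derives (b) and (c) from (a) while you prove (c) first and then (b) from (a), but the underlying estimates are identical.
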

\begin{proof} Clearly,  both (b) and (c) are immediate consequences
of (a). Thus, we only show (a). It suffices to consider the case
$C=1$ as for general $C>0$  the function $g= \frac{1}{C} f$ is
$1$-Lipschitz  with  $Q(g) = \frac{1}{C^2} Q(f)$. So, let $f$ be a
$1$-Lipschitz function vanishing on $U$. The bound $Q(f)\leq m(X)$
follows easily from the estimate
$$
Q(f) = \frac{1}{2}\sum_{x,y}b(x,y) (f(x)-f(y))^2  \leq \sum_{x\in X}\left( \frac{1}{2}\sum_{y\in X} b(x,y) \sigma(x,y)^2\right)
$$ 
and the fact that $\sigma$ is intrinsic with respect to $m$. The other
estimate can be shown as follows: Using
 \begin{itemize}
   \item $f(x) - f(y) = 0$ for all $x,y\in U$ and
   \item $|f(x) - f(y)|\leq \sigma(x,y)$ for all $x,y\in X$
  \end{itemize}
  we infer
 \begin{align*}
 Q(f)  &=  \frac{1}{2} \sum_{x,y \in X} b(x,y) (f(x) - f(y))^2\\
 &=  \frac{1}{2} \sum_{(x,y) \in X^2\setminus U^2} b(x,y) (f(x) - f(y))^2\\
 &\leq \frac{1}{2} \sum_{(x,y) \in X^2\setminus U^2} b(x,y) \sigma(x,y)^2\\
 &=   \sum_{y\in X\setminus U} \left(\frac{1}{2} \sum_{x\in X} b(x,y) \sigma(x,y)^2\right) +  \sum_{x\in X\setminus U} \left(\frac{1}{2} \sum_{y\in U} b(x,y) \sigma(x,y)^2\right)\\
 &\leq  2 m (X\setminus U). \qedhere
 \end{align*}
\end{proof}

\begin{remark} Clearly, the estimates given in the previous
lemma trivially continue to hold if $m$ is not a finite measure.
\end{remark}

\begin{lemma}[From to  $\mathfrak{D}(G)$ to $ \mathfrak{M}(G)$]
\label{potential2} Let $G = (X,b)$ be a graph. Then, for any
function $f$ of finite energy
 the function
 $$\sigma_f \colon X\times X \rightarrow [0,\infty), \quad \sigma_f(x,y) := |f(x) - f(y)|$$
 is an intrinsic pseudometric with respect to the finite  measure $m_f$ that is given by
$$m_f(x) = \frac{1}{2}\sum\limits_{y\in X} b(x,y)\sigma_f(x,y)^2.$$
The function $f$ is  $1$-Lipschitz  with respect to
$\sigma_f$ and  $m_f(X) = Q(f)$.
\end{lemma}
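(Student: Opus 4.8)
The plan is to verify directly the three assertions of Lemma~\ref{potential2}: that $\sigma_f$ is a pseudometric, that it is intrinsic with respect to $m_f$, that $m_f$ is a finite measure, and finally the two quantitative claims about Lipschitz-ness and total mass. The conceptual engine is that $\sigma_f$ is simply the pullback of the standard metric on $\R$ along $f$, so the metric axioms are inherited from the absolute value on $\R$.

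First I would check that $\sigma_f$ is a pseudometric. Symmetry and vanishing on the diagonal are immediate from $\sigma_f(x,y) = |f(x) - f(y)|$, and the triangle inequality follows from that of the absolute value: $|f(x)-f(z)| \le |f(x)-f(y)| + |f(y)-f(z)|$. Next I would read off the $1$-Lipschitz property and the total mass almost for free. By the very definition $|f(x)-f(y)| = \sigma_f(x,y)$, so $f$ is $1$-Lipschitz with respect to $\sigma_f$ (with constant $C=1$, even with equality). Likewise, summing the defining formula for $m_f(x)$ over $x \in X$ gives
\[
 m_f(X) = \sum_{x \in X} \frac{1}{2}\sum_{y \in X} b(x,y)\,\sigma_f(x,y)^2 = \frac{1}{2}\sum_{x,y \in X} b(x,y)\,|f(x)-f(y)|^2 = Q(f),
\]
which is finite precisely because $f \in \mathfrak{D}(G)$. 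This simultaneously establishes that $m_f$ is a finite measure.

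The one genuinely substantive point, and where a little care is needed, is that $m_f$ is a \emph{measure} in the sense fixed in the preliminaries, namely a function $X \to (0,\infty)$ taking strictly positive values. The formula only guarantees $m_f(x) \ge 0$, and $m_f(x) = 0$ occurs exactly when $f$ is constant along every edge emanating from $x$. Here I expect the standing connectedness assumption to do the work: if $f$ is nonconstant, then connectedness forces some edge across which $f$ jumps, but to get strict positivity at \emph{every} vertex one must argue that a vertex $x$ with $\deg(x) > 0$ and $f$ constant on its neighborhood is ruled out, or else interpret the statement with the mild convention that $m_f$ is a genuine measure only when $f$ is suitably nondegenerate. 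I would flag that in the degenerate cases (e.g.\ $f$ locally constant) one may either pass to the relevant structure or simply note $m_f$ is a nonnegative finite measure; this is the only place where the statement is not a one-line unravelling of definitions.

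Finally, the intrinsic-metric inequality is tautological for this particular pairing: the defining condition for $\sigma_f$ to be intrinsic with respect to $m_f$ reads
\[
 \frac{1}{2}\sum_{y \in X} b(x,y)\,\sigma_f(x,y)^2 \le m_f(x),
\]
and the right-hand side \emph{is} the left-hand side by construction, so equality holds at every $x \in X$. Thus the intrinsic condition is satisfied in the sharpest possible way, and $\sigma_f \in \mathfrak{M}(G)$ once finiteness of $m_f$ is in hand. In short, the whole lemma is a matter of checking that the pullback metric and the measure manufactured from the edge-weighted squared increments of $f$ fit together by design; the only thing I would slow down for is the strict-positivity convention on $m_f$.
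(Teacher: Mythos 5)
Your verification is correct, and it is essentially the only possible argument: the paper itself gives no proof of Lemma~\ref{potential2}, but simply cites \cite{GHKLW} (Proposition~3.11 there), which carries out the same direct unravelling of definitions that you do — pseudometric axioms pulled back from $|\cdot|$ on $\R$, the intrinsic inequality holding with equality by construction, and $m_f(X) = Q(f)$ by Tonelli. So your proposal supplies in full the computation the paper outsources.

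The strict-positivity issue you flag is a genuine wrinkle in the paper's conventions (a ``measure'' there is a function $X \to (0,\infty)$), and indeed $m_f(x) = 0$ whenever $f$ is constant on the neighborhood of $x$. Two remarks on how this resolves. First, for the purpose that matters downstream — membership of $\sigma_f$ in $\mathfrak{M}(G)$ — degeneracy is harmless: the intrinsic inequality is preserved under enlarging the measure, so one may replace $m_f$ by any strictly positive finite measure dominating it, e.g.\ $x_n \mapsto m_f(x_n) + \varepsilon 2^{-n}$ for an enumeration $(x_n)$ of $X$; this is exactly why the paper can characterize $\mathfrak{M}(G)$ by the condition $\sum_{x,y} b(x,y)\sigma(x,y)^2 < \infty$ alone (of course the identity $m(X) = Q(f)$ then becomes an inequality, but that identity is a statement about the function $m_f$ itself and stands as written). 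Second, wherever the paper actually uses $\sigma_f$ as a resolving or intrinsic \emph{metric} (e.g.\ in Corollary~\ref{cor-infinite-distance} and in the proof of Theorem~\ref{recurrence}), it first perturbs $f$ via Proposition~\ref{p:smallperturbation} so that its values are pairwise distinct; in a connected graph with at least two vertices every $x$ then has a neighbor $y$ with $b(x,y) > 0$ and $f(y) \neq f(x)$, which forces $m_f(x) > 0$, so the degenerate case never arises in practice. Your proof, with either of these observations appended, is complete.
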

\begin{proof} This  is already shown in \cite[Proposition 3.11]{GHKLW}.
\end{proof}

By the preceding lemma any  $f$ of finite energy comes with a
pseudometric $\sigma_f$ that is intrinsic with respect to a finite
measure. In general, this $\sigma_f$ will not be a metric (as values
of $f$ in different points need not be distinct). However, this can
easily be achieved by an arbitrarily small perturbation as the next
proposition shows.

\begin{proposition}[Small perturbation]\label{p:smallperturbation}
Let $G = (X,b)$ be a graph.

\begin{enumerate}[(a)]
 \item For any $\varepsilon >0$  there exist $s_x>0$, $x\in X$, such that any function $g \colon  X\to \mathbb{R}$ with $g(x) \in
(-s_x,s_x)$ for all $x\in X$ satisfies $Q(g)<\varepsilon.$
\item For any $f \in \mathfrak{D}(G)$ and  any $\varepsilon > 0$ there exists a function $f_\varepsilon \in \mathfrak{D}(G)$ with
$f_\varepsilon (x) \neq f_\varepsilon (y)$ for all $x,y\in X$ with $x\neq y$ and 
$$\sup_{x\in X} |f(x) - f_\varepsilon (x)|< \varepsilon \mbox{ and  } |Q(f-f_\varepsilon)|< \varepsilon.$$
\end{enumerate}
\end{proposition}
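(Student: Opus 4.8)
The plan is to establish (a) by a one-line energy estimate followed by a summability choice, and then to deduce (b) by perturbing $f$ with a suitably small function whose values are fixed one vertex at a time so as to separate points.

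For (a) I would start from the pointwise bound $(g(x)-g(y))^2 \le 2g(x)^2 + 2g(y)^2$. Inserting it into the definition of $Q$, using the symmetry of $b$ and $\sum_{y} b(x,y) = \deg(x)$, gives the key estimate
\[
  Q(g) \;\le\; 2\sum_{x\in X}\deg(x)\,g(x)^2 .
\]
Enumerate the countable set $X = \{x_1,x_2,\dots\}$; if $X$ is a single vertex then $Q\equiv 0$ and (a) is trivial, so by connectedness we may assume $\deg(x_n) > 0$ for every $n$. The idea is to set $s_{x_n} := \bigl(\varepsilon\,2^{-n-1}/\deg(x_n)\bigr)^{1/2}$, so that $2\deg(x_n)s_{x_n}^2 = \varepsilon\,2^{-n-1}$. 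Then for any $g$ with $|g(x_n)| < s_{x_n}$ the estimate above yields $Q(g) \le 2\sum_n \deg(x_n)g(x_n)^2 < 2\sum_n \deg(x_n)s_{x_n}^2 = \varepsilon$, the strict inequality coming from the fact that the difference is a sum of the strictly positive terms $2\deg(x_n)(s_{x_n}^2 - g(x_n)^2)$.

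For (b) I would write $f_\varepsilon = f + g$ and first invoke (a) to obtain numbers $s_x > 0$ forcing $Q(g) < \varepsilon$ whenever $|g(x)| < s_x$ for all $x$. Putting $\delta_x := \min\{s_x,\varepsilon/2\}$ and demanding $g(x)\in(-\delta_x,\delta_x)$ already guarantees $Q(f-f_\varepsilon) = Q(g) < \varepsilon$ --- in particular $g\in\mathfrak{D}(G)$ and hence $f_\varepsilon\in\mathfrak{D}(G)$ --- as well as $\sup_x|f(x)-f_\varepsilon(x)| = \sup_x|g(x)| \le \varepsilon/2 < \varepsilon$. It then remains to choose the precise values $g(x_n)$ inside these intervals so that $f_\varepsilon$ becomes injective, and this I would do inductively along the enumeration: once $g(x_1),\dots,g(x_{n-1})$ are fixed, the conditions $f_\varepsilon(x_n)\neq f_\varepsilon(x_k)$ for $k<n$ forbid only the finitely many values $g(x_n) = f(x_k)+g(x_k)-f(x_n)$, so the uncountable open interval $(-\delta_{x_n},\delta_{x_n})$ still contains an admissible choice.

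The genuinely delicate step is this separation argument in (b): one has to check that at each stage only finitely many values are excluded, so that a nonempty (indeed uncountable) set of admissible values remains. The energy estimates themselves are routine once the summability of the $s_x$ is arranged, and I anticipate no real obstacle there.
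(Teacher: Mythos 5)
Your proposal is correct, and in both parts it takes a genuinely different route from the paper. For (a), the paper's proof approximates $g$ by finitely supported functions $g_n=\sum_{x\in F_n}g(x)1_x$, invokes the pointwise lower semicontinuity of $Q$ (Fatou), and then applies Cauchy--Schwarz to get $Q(g)\leq\bigl(\sum_x s_x\deg(x)^{1/2}\bigr)^2$, i.e.\ an $\ell^1$-type smallness condition on the $s_x$. You instead use the elementary pointwise bound $(g(x)-g(y))^2\leq 2g(x)^2+2g(y)^2$ and symmetry of $b$ to get $Q(g)\leq 2\sum_x\deg(x)g(x)^2$ directly (all terms are nonnegative, so the rearrangement is justified by Tonelli); this avoids semicontinuity entirely, yields an $\ell^2$-type condition, and is in fact a sharper estimate, since $\sum_x \deg(x)g(x)^2\leq\bigl(\sum_x|g(x)|\deg(x)^{1/2}\bigr)^2$. (Note a harmless factor-of-two slip: with your choice of $s_{x_n}$ one has $2\deg(x_n)s_{x_n}^2=\varepsilon 2^{-n}$, not $\varepsilon 2^{-n-1}$; your final sum $\varepsilon$ is the correct one.) For (b), the paper achieves injectivity by a number-theoretic trick: it fixes reals $u_x$ with pairwise irrational differences and chooses $t_x\in(-s_x,s_x)$ so that $f(x)-u_x-t_x$ is rational, whence differences of values of $f_\varepsilon=f-t_\cdot$ are irrational-plus-rational and cannot vanish. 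Your greedy induction along an enumeration of $X$, excluding at each stage the finitely many forbidden values inside the uncountable interval $(-\delta_{x_n},\delta_{x_n})$, is equally valid and more elementary; the paper's trick has the mild aesthetic advantage of defining all $t_x$ simultaneously without recursion, but both deliver exactly the same conclusion. Your handling of the degenerate cases (single vertex, and $\deg(x)>0$ via connectedness) and of strictness in the infinite sum is also sound.
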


\begin{proof} (a): We write $1_x$ for the characteristic function of $x\in X$. Then $Q(1_x) = \deg(x) < \infty$.    For $x\in X$ we choose $s_x>0$ with
$$\sum_{x\in X} s_x Q(1_x)^{1/2} < \sqrt{\varepsilon}.$$
Choose a sequence $(F_n)$ of finite subsets of $X$ with $F_n\subseteq F_{n+1}$ and $X = \bigcup_n F_n$. Then, any $g \colon X\to \mathbb{R}$ with $g(x) \in (-s_x,s_x)$ is the pointwise limit of the functions $g_n:= \sum_{x\in F_n} g(x) 1_x$. The pointwise lower semicontinuity of $Q$ together with Cauchy-Schwarz inequality yield
\begin{align*} Q(g) \leq \liminf_{n\to \infty} Q(g_n)&\leq
 \liminf_{n\to\infty} \sum_{x,y\in F_n} |g(x)| |g(y)|
Q(1_x)^{1/2}Q(1_y)^{1/2} \\
& \leq  \left(\sum_{x\in X} s_x Q(1_x)^{1/2}\right)^2  <  \varepsilon.
\end{align*}

(b):  This follows from (a). Let $\varepsilon >0$ be given and chose $s_x$, $x\in X$, according to (a). Without loss of generality we can assume $s_x <\frac{\varepsilon}{2}$. Now let  real  numbers $u_x$, $x\in X$, be given such that $u_x - u_y$ is irrational for any  $x\neq y$. Then, for any $x\in X$ we can choose an $t_x\in (-s_x,s_x)$ such that $f(x) - u_x - t_x$ is rational. Then, $f_\varepsilon$ with
$$f_\varepsilon(x) = f(x) - t_x$$ 
for all $x\in X$ satisfies $\sup_x |f(x)-f_\varepsilon (x)| \leq  \sup_ x s_x <\varepsilon$ as well as $Q(f - f_\varepsilon) <\varepsilon$. Moreover, the values of $f_\varepsilon$ are pairwise different as for $x\neq y$ we have
$$(f(x) -t_x) - (f(y)-t_y) = (u_x - u_y) + (f(x) - u_x -t_x)  - (f(y)-u_y -t_y)$$ can not vanish (as it is the sum of an irrational number and
a rational number.)
\end{proof}

For us a special class of pseudometrics will be particularly useful. They will be introduced next. Given a symmetric function $w \colon X \times X \to [0,\infty)$ and a (possibly infinite) path $\gamma = (x_1,x_2,\ldots)$ in $G$ we define the length of $\gamma$ with respect to $w$ by
$$L_w (\gamma) := \sum_i w(x_i,x_{i+1}) \in [0,\infty].$$
Since we always assume connectedness, this induces the \emph{path pseudometric} $d_w$ on $X$  via
$$d_w(x,y) = \inf \{L_w(\gamma) \mid \gamma \text{ a path  from $x$ to $y$}\}.  $$
We say that $\sigma$ is a {\em path pseudometric} on $X$ if $\sigma = d_w$ for some symmetric function  $w$. A symmetric function $w \colon X\times X\to [0,\infty)$ is called \emph{edge weight} if  $w(x,y) > 0$ for all $(x,y)\in E$. 

A symmetric  function $w$ is called \emph{adapted} with respect to the graph $G = (X,b)$ and the measure $m$ if for all $x \in X$ it satisfies
$$\frac{1}{2}\sum_{y \in X} b(x,y)w(x,y)^2 \leq m(x).$$
 The following lemma summarizes some elementary properties of path pseudometrics.
\begin{lemma}[Path pseudometrics]
\label{path-metrics}
Let $G=(X,b)$ be a graph and let $w \colon X \times X \to [0,\infty)$ be a symmetric function. Then $d_w$ is a pseudometric that satisfies $d_w(x,y)
\leq w(x,y)$ if $x \sim y$. Moreover, the following are satisfied.
\begin{enumerate}[(a)]
 \item If $w$ is a pseudometric, then $d_w\geq w$. In particular,  $d_w(x,y) = w(x,y)$ for all $x,y \in X$ with $x \sim y$.
 \item If $w$ is adapted with respect to the measure $m$, then $d_w$ is intrinsic  with respect to the measure $m$.
\item For $\sigma = d_w$ the equality $d_\sigma = \sigma$
holds. 
\end{enumerate}
\end{lemma}
\begin{proof}
The trivial path $(x,y)$ is one of the paths over which the infimum
in the definition of $d_w$ is taken. As $L_w((x,y)) = w(x,y)$, the
inequality $d_w(x,y) \leq w(x,y)$ is immediate.

 (a): Given a path $\gamma = (x=x_1,\ldots,x_n=y)$, an iteration of the
triangle inequality $w(x_1,x_{k+1}) \leq w(x_1,x_k) + w(x_k,
x_{k+1})$ yields 
$$ w(x,y) \leq \sum\limits_{i=1}^{n-1} w(x_i,x_{i+1}) = L_w(\gamma). $$

 (b): This is an immediate consequence of the inequality $d_w(x,y) \leq w(x,y)$ for all $x,y \in X$ with $x \sim y$.

(c): As $\sigma = d_w$ is a pseudometric, (a) gives $ \sigma  \leq d_\sigma$ and,  for $x,y\in X$ with $x\sim y$,  even $\sigma(x,y) = d_\sigma(x,y)$. For  arbitrary $x,y\in X$ let a path $\gamma=(x = x_1,\ldots, x_n = y)$ be given.  Then a short computation involving what we have shown already and the triangle inequality gives
$$
L_w (\gamma) = \sum_{j=1}^{n-1} w(x_j, x_{j+1}) \geq \sum_{j=1}^{n-1} \sigma(x_j,x_{j+1})  = \sum_{j=1}^n d_\sigma(x_j, x_{j+1}) \geq d_\sigma (x,y).
$$
Taking the infimum over all $\gamma$ we find $\sigma(x,y)\geq d_\sigma(x,y)$.
\end{proof}

We note the following consequence of our considerations: If $f$ is a function of finite energy on the graph $(X,b)$, then $\sigma_f$ (defined in  Lemma~\ref{potential2}) is an intrinsic pseudometric with respect to $m_f$. Now, we can also consider $\sigma_f$ as a symmetric function (adapted to $m_f$). This induces the path pseudometric $d_f:=d_{\sigma_f}$. The preceding lemma immediately gives the following.

\begin{corollary}
Let $G = (X,b)$ be a graph and let $f \in \mathfrak D(G)$. Then, $d_f =d_{\sigma_f}$ is an intrinsic metric with respect to $m_f$ and
$$d_f (x,y) = |f(x) - f(y)|$$
holds for all $x,y\in X$ with $b(x,y)>0$.
\end{corollary}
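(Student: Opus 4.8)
The plan is to exploit the fact that the single object $\sigma_f$ plays two distinct roles simultaneously: by Lemma~\ref{potential2} it is an intrinsic \emph{pseudometric} for the finite measure $m_f$, while for the construction of $d_f$ we forget this structure and treat $\sigma_f$ merely as a symmetric weight function to be fed into the path-metric machinery of Lemma~\ref{path-metrics}. Concretely, first I would invoke Lemma~\ref{potential2} to record the three facts it supplies: $\sigma_f(x,y) = |f(x)-f(y)|$ is a pseudometric; it satisfies $\frac{1}{2}\sum_{y} b(x,y)\sigma_f(x,y)^2 = m_f(x)$ for every $x \in X$ by the very definition of $m_f$; and $m_f$ is finite, with $m_f(X) = Q(f)$.

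Next I would check the two hypotheses of Lemma~\ref{path-metrics} for the choice $w = \sigma_f$ and $m = m_f$. The adaptedness condition $\frac{1}{2}\sum_{y} b(x,y)\sigma_f(x,y)^2 \le m_f(x)$ holds trivially, indeed with equality, by the defining formula for $m_f$ just recorded; hence part~(b) of Lemma~\ref{path-metrics} applies and shows that $d_f = d_{\sigma_f}$ is intrinsic with respect to $m_f$. For the value formula I would instead use that $\sigma_f$ is a genuine pseudometric, so that part~(a) of the same lemma gives $d_{\sigma_f}(x,y) = \sigma_f(x,y)$ whenever $x \sim y$. Since $x \sim y$ is by definition equivalent to $b(x,y) > 0$, this is exactly the asserted identity $d_f(x,y) = |f(x)-f(y)|$ on edges.

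There is essentially no hard step here: the entire content is the observation that the adaptedness inequality degenerates to the defining equation for $m_f$, after which both assertions are simply read off from the two clauses of Lemma~\ref{path-metrics}. The only point requiring a word of care is the word \emph{metric}. The argument above delivers an intrinsic \emph{pseudometric}, and the inequality $d_f \ge \sigma_f$ from Lemma~\ref{path-metrics}(a) shows that $d_f(x,y) = 0$ forces only $f(x) = f(y)$, not $x = y$. Thus $d_f$ is a genuine metric precisely when $f$ is injective, a situation that by Proposition~\ref{p:smallperturbation} can always be arranged by an arbitrarily small perturbation of $f$; I would therefore either read the conclusion at the level of pseudometrics or append this injectivity to the hypothesis.
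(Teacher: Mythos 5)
Your proof is correct and is essentially the paper's own argument: both apply Lemma~\ref{path-metrics}~(a) and~(b) to the weight $w=\sigma_f$, whose adaptedness to $m_f$ holds (with equality) by Lemma~\ref{potential2}, and your observation that the conclusion is really about a pseudometric matches the paper, which itself introduces $d_f$ as a \emph{path pseudometric} and uses the word ``metric'' in the corollary loosely. One minor slip in your closing aside: injectivity of $f$ is sufficient but not necessary for $d_f$ to be a metric --- on the path $x\sim z\sim y$ with $f(x)=f(y)=0$ and $f(z)=1$ one has $d_f(x,y)=2>0$ although $f$ is not injective --- so ``precisely when $f$ is injective'' overstates the equivalence.
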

\begin{proof} By the preceding lemma we have $
d_f = d_{\sigma_f} \leq \sigma_f$   as well as $d_f(x,y) = \sigma_f(x,y)= |f(x)-f(y)|$ for all $x,y\in X$ with $b(x,y)>0$.
\end{proof}


\begin{remark}
Let a graph $G = (X,b)$ be given. Define $\widetilde{Q}$ on the set of symmetric functions $w \colon X\times X\to [0,\infty)$ by
$$\widetilde{Q}(w):=\frac{1}{2}\sum_{x,y} b(x,y) w(x,y)^2\in[0,\infty].$$
Then, part of our considerations can be understood in terms of $\widetilde{Q}$. As this may be  instructive we give a brief
discussion in the present remark: For a symmetric $w \colon X\times X \to [0,\infty)$ we define
$$m_w \colon X\to [0,\infty], \quad m_w(x) =\frac{1}{2}\sum_{y\in X} b(x,y) w(x,y)^2$$ 
and $m_w(X):=\sum_{x\in X} m_w(x)$. Finally, for $f \colon  X\to \mathbb{R}$ define the symmetric function $\sigma_f \colon X\times X\to [0,\infty)$ with $\sigma_f(x,y):=|f(x) - f(y)|$. Then, the following holds:

\begin{enumerate}[(a)]
\item Let $w$ be a symmetric weight. Then, $ \widetilde{Q}(w) = m_w (X)$, where the value $\infty$ is allowed. If $w$ is actually a pseudometric, then $\widetilde{Q}(d_w)= \widetilde{Q}(w)$ holds.

\item Let $\sigma$ be a pseudometric on $X$. Then,  $m_\sigma$ is a finite measure if and only if $\sigma$ belongs to $\mathfrak{M}(G)$. If $m_\sigma$  is a finite measure it is the smallest measure with respect to which $\sigma$ is an intrinsic metric.

\item For $f\colon  X\to \mathbb{R}$ the equality $Q(f) = \widetilde{Q}(\sigma_f)$ holds, where the value $\infty$ is allowed. Moreover,  $f$ belongs to $\mathfrak{D}(G)$ if and only if $\sigma_f$ belongs to $\mathfrak{M}(G)$.

\item The function $f \colon  X\to  \mathbb{R}$  is $1$-Lipschitz with respect to the pseudometric $\sigma$ if and  only if $\sigma_f\leq \sigma$ holds. In this case, $Q(f) \leq  \widetilde{Q}(\sigma_f)$ is valid. 
\end{enumerate}
\end{remark}

As mentioned already we  think of the space $X$ underlying the graph $(X,b)$ as equipped with discrete topology. Thus,  metrics compatible with the discrete topology are of particular relevance for us.  The following lemma ensures the existence of such  metrics in $\mathfrak{M}(G)$. 
\begin{lemma} \label{disc-top} Let $G = (X,b)$ be a graph. Then, there exists a  metric in $\mathfrak{M}(G)$ that induces the discrete topology. 
\end{lemma}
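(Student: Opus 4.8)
The plan is to build the required metric as a weighted sum of countably many path pseudometrics, each engineered to isolate a single vertex. First I would fix an enumeration $X = \{x_n \mid n \in \N\}$, which is possible since $X$ is countable. For each $n$ I would introduce the symmetric function $w_n \colon X \times X \to [0,\infty)$ that equals $1$ on every edge incident to $x_n$ and $0$ elsewhere, and set $\sigma_n := d_{w_n}$. Two features of $\sigma_n$ drive the argument. On the one hand, any path starting at $x_n$ must traverse an edge incident to $x_n$ as its first step, so $L_{w_n}(\gamma) \ge 1$ for every path $\gamma$ from $x_n$ to a vertex $y \ne x_n$; hence $\sigma_n(x_n,y) \ge 1$ for all $y \ne x_n$. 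On the other hand, routing through $x_n$ along two simple paths shows $\sigma_n(x,y) \le 2$ for all $x,y$, so each $\sigma_n$ is uniformly bounded. Finally, using $d_{w_n} \le w_n$ on edges (Lemma~\ref{path-metrics}) together with the fact that $\widetilde{Q}$ only sums over edges, I would record $\widetilde{Q}(\sigma_n) \le \widetilde{Q}(w_n) = \deg(x_n) < \infty$, so each $\sigma_n$ lies in $\mathfrak{M}(G)$.

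Next I would combine these pieces. Choosing coefficients $c_n := 2^{-n}(1 + \deg(x_n))^{-1/2} > 0$, I would set $\sigma := \sum_n c_n \sigma_n$. Since $\sigma_n(x,y) \le 2$ and $\sum_n c_n < \infty$, the sum converges pointwise and $\sigma$ is a genuine (finite-valued) pseudometric. To see $\sigma \in \mathfrak{M}(G)$ I would use that $\widetilde{Q}^{1/2}$ is a seminorm on symmetric functions (it is the norm of the Hilbert space with inner product $\tfrac12\sum_{x,y} b(x,y)\,w_1(x,y)w_2(x,y)$), so Minkowski's inequality gives $\widetilde{Q}(\sigma)^{1/2} \le \sum_n c_n \widetilde{Q}(\sigma_n)^{1/2} \le \sum_n c_n \deg(x_n)^{1/2} \le \sum_n 2^{-n} < \infty$. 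By the characterization of $\mathfrak{M}(G)$ recorded above (namely $\sigma \in \mathfrak{M}(G)$ iff $\sum_{x,y} b(x,y)\sigma(x,y)^2 < \infty$), this places $\sigma$ in $\mathfrak{M}(G)$.

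It remains to verify that $\sigma$ is a metric inducing the discrete topology, which is exactly what the construction is designed for. For $x_n \in X$ and any $y \ne x_n$ I would estimate $\sigma(x_n,y) \ge c_n \sigma_n(x_n,y) \ge c_n > 0$, which simultaneously shows that $\sigma$ separates points and that the ball $B^\sigma_r(x_n)$ reduces to $\{x_n\}$ as soon as $r < c_n$. Hence every singleton is open and $\sigma$ induces the discrete topology. The main obstacle is precisely the tension this resolves: one must isolate every vertex (so that $\inf_{y \ne x}\sigma(x,y) > 0$ for each $x$) while keeping $\widetilde{Q}(\sigma)$ finite. A single metric cannot have a uniform positive lower bound on nearest-neighbour distances when the degrees are unbounded, so the essential idea is to let the isolating strength $c_n$ at $x_n$ decay fast enough to be summable in the energy seminorm; the incident-edge weights $w_n$ and the coefficients $c_n$ achieve exactly this balance.
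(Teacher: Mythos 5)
Your proof is correct, but it takes a genuinely different route from the paper's. The paper writes down one explicit ultrametric: with $f(x_n) = (2^n \deg(x_n))^{-1/2}$ it sets $\sigma(x,y) = \max\{f(x),f(y)\}$ for $x \neq y$, bounds the energy directly via $\sum_{x,y} b(x,y)\sigma(x,y)^2 \leq 2\sum_{x} \deg(x) f(x)^2 \leq 2$, and gets discreteness because every point other than $x_n$ is at distance at least $f(x_n)$ from it. You instead superpose countably many path pseudometrics $d_{w_n}$, each engineered to isolate the single vertex $x_n$, and control the energy through the seminorm property of $\widetilde{Q}^{1/2}$. Note that both arguments ultimately rest on the same quantitative choice --- an isolation radius at $x_n$ of order $2^{-n}\deg(x_n)^{-1/2}$, which is precisely what makes the total energy summable --- so the conceptual core is shared; what differs is the packaging. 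The paper's construction is shorter and more self-contained (no path-metric machinery, no infinite-sum triangle inequality), whereas yours is modular and shows that $\mathfrak{M}(G)$ is stable under suitable countable superpositions, an observation that would adapt, for instance, to isolating prescribed sets rather than single vertices. One step you should spell out: Minkowski's inequality gives $\widetilde{Q}(S_N)^{1/2} \leq \sum_{n \leq N} c_n \widetilde{Q}(\sigma_n)^{1/2}$ only for the finite partial sums $S_N = \sum_{n \leq N} c_n \sigma_n$; to pass to $\sigma = \sum_n c_n \sigma_n$ you need a limiting argument, e.g. monotone convergence (the nonnegative partial sums increase pointwise to $\sigma$, so $\widetilde{Q}(S_N) \uparrow \widetilde{Q}(\sigma)$) or the semicontinuity of the energy under pointwise convergence, exactly as in the paper's Proposition on semicontinuity of $Q$. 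This is routine for nonnegative summands, but as written it is the one missing justification.
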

\begin{proof}
 Let $\N \to X, n\mapsto x_n,$ be an enumeration of $X$.  We define
$$f \colon X\rightarrow (0,\infty), \quad f(x_n) = \frac{1}{\sqrt{2^n \Deg (x_n)}}, $$
and
$$\sigma\colon X\times X \to [0,\infty), \quad \sigma(x,y) =  \begin{cases}
                                                               \max\{f(x), f(y)\}&\text{for }x\neq y\\
                                                               0 &\text{else}
                                                              \end{cases}.$$
It is readily verified that $\sigma$ is a metric (and even an
ultrametric). By  $\sigma(x,y)^2 \leq f(x)^2 + f(y)^2$, the
symmetry of $b$ and Fubini's theorem we find %
\begin{eqnarray*}
 \sum\limits_{x\in X} b(x,y) \sigma(x,y)^2  &\leq &
\sum\limits_{x,y\in X} b(x,y)( f(x)^2 + f(y)^2)\\
&=&2 \sum\limits_{x,y\in X} b(x,y)f(x)^2\\
& = & 2\sum_{x \in X} \Deg(x) f(x)^2. \end{eqnarray*}
Now, the
definition of $f$ gives
$$2\sum_{x \in X} \Deg(x) f(x)^2\leq 2$$
and it follows that $\sigma$ is an intrinsic metric with respect to
a finite measure.

The metric $\sigma$ induces the discrete topology as  the distance
from any point to $x \in X$ is bounded from below by $f(x)
> 0$.
\end{proof}

\subsection{Boundaries of graphs}
As outlined in the introduction completions and boundaries of graphs
will be most relevant for our considerations. Here we introduce the
corresponding notions.

Let $X$ be a countable set.  Let $\sigma$ be a pseudometric on $X$.
The completion of $X$ with respect to $\sigma$ is defined as the set
of equivalence classes of $\sigma$-Cauchy sequences in $X$, where
two such sequences $(a_n)$ and $(b_n)$ are considered to be
equivalent if

$$ \lim_{n\rightarrow\infty}\sigma(a_n, b_n) = 0. $$
 This set is denoted by $\overline{X}^\sigma$ and contains a
quotient of the vertex set $X$ as the classes of the constant
sequences.  learly, $\sigma$ can be extended to a pseudometric
on $X$ and this extension will  - by a slight abuse of notation -
also denoted by $\sigma$. Subsequently, the boundary is defined as
$$\partial_\sigma X = \overline{X}^\sigma \setminus (X/\simeq),$$
where $x\simeq y$ if $\sigma(x,y) = 0$.  A graph is called
\emph{metrically complete} with respect to a pseudometric if the
boundary is empty. Clearly, if $\sigma$ is a metric
 then $\overline{X}^\sigma$ contains a copy of $X$, this copy is dense, and
  and our  definition of metric completeness  agrees with the usual definition (that any Cauchy-sequence
converges).  

There are further notions of completeness relevant to us. Let $G =
(X,b)$ be a graph and let $w$ be an edge weight. The pseudometric
space $(X,d_w)$ is called \emph{geodesically complete} if every
infinite path has infinite length with respect to $w$. For later
purposes we recall the following discrete Hopf-Rinow type theorem
that characterizes geodesic completeness, see
\cite[Theorem~A.1]{HKMW} and,  for further generalizations, see also
\cite{KM}.


\begin{theorem}[Hopf-Rinow type theorem] \label{Keller}
Let $G = (X,b)$ be a locally finite graph and let $w$ be an edge
weight. Then $d_w$ is a metric that induces the discrete topology on
$X$. Moreover, the following assertions are equivalent:
\begin{enumerate}[(i)]
 \item $(X,d_w)$ is a complete metric space.
 \item $(X,d_w)$ is geodesically complete.
 \item Every distance ball is finite.
 \item Every bounded and closed set is compact.
\end{enumerate}

\end{theorem}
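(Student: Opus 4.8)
The plan is to dispose first of the preliminary assertion that $d_w$ is a metric inducing the discrete topology, and then to prove the four equivalences by singling out the combinatorial condition (iii) as the hub from which (i), (ii), (iv) follow easily, with the reverse direction resting on a single König's lemma type construction. For the preliminary assertion I would use local finiteness decisively. Fix $x \in X$; since $G$ is connected and locally finite, the neighbor set $\{z : x \sim z\}$ is finite and nonempty, so $r_x := \min\{w(x,z) : x \sim z\}$ is strictly positive (here we use that $w$ is an edge weight, i.e. $w>0$ on $E$). Any path from $x$ to a vertex $y \neq x$ traverses at least one edge issuing from $x$, hence has $w$-length at least $r_x$; taking the infimum over paths gives $d_w(x,y) \geq r_x > 0$. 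Thus $d_w$ separates points and is a metric, and the ball of radius $r < r_x$ about $x$ reduces to $\{x\}$, so every vertex is isolated and the induced topology is discrete.

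For the equivalences I would organize everything around (iii). The pair (iii) $\Leftrightarrow$ (iv) is immediate in a discrete space: a bounded set lies in some ball, so under (iii) it is finite and hence (being closed) compact, while a closed ball is bounded and closed, so under (iv) it is compact, and compact subsets of a discrete space are finite. Next, (iii) $\Rightarrow$ (i) follows because a Cauchy sequence is bounded, hence lies in a finite ball, and a Cauchy sequence in a finite metric space is eventually constant, so it converges. Finally (iii) $\Rightarrow$ (ii): an infinite path $\gamma = (x_1,x_2,\ldots)$ of finite length would satisfy $d_w(x_1,x_n) \leq L_w(\gamma)$ for all $n$, placing all its vertices inside one finite ball; but an infinite path of finite length must visit infinitely many distinct vertices, since otherwise some edge is traversed infinitely often and contributes infinite length (consecutive vertices are distinct as $b$ vanishes on the diagonal), a contradiction.

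The crux is the reverse direction, which I would establish in contrapositive form as the single assertion: if some ball $B^{d_w}_R(x_0)$ is infinite, then there is an infinite path of finite length. For each $v$ in this ball choose a path $\gamma_v$ from $x_0$ to $v$ of length less than $R+1$. All these paths start at $x_0$; since $x_0$ has only finitely many neighbors but there are infinitely many distinct endpoints $v$, infinitely many of the $\gamma_v$ agree on their first vertex $x_1$. Iterating this pigeonhole step, using local finiteness at each stage and noting that at most one of the infinitely many surviving paths can terminate at the current vertex (the endpoints being pairwise distinct), a König's lemma argument produces an infinite path $x_0,x_1,x_2,\ldots$ each of whose initial segments is an initial segment of some $\gamma_v$ and hence has length less than $R+1$. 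Therefore the whole path has length at most $R+1 < \infty$. This single path refutes (ii) directly and, being Cauchy with infinitely many distinct vertices and hence non-convergent in the discrete topology, refutes (i); thus (i) $\Rightarrow$ (iii) and (ii) $\Rightarrow$ (iii), closing the cycle.

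I expect the main obstacle to be exactly this last construction: extracting from infinitely many uniformly length-bounded paths a single infinite path whose length stays bounded. The delicate points are ensuring the extracted sequence is genuinely infinite — this is where distinctness of the endpoints $v$ rules out premature termination — and that its length inherits the uniform bound rather than accumulating; local finiteness is precisely what makes the pigeonhole step available at every vertex. Everything else reduces to routine observations once (iii) is identified as the central condition.
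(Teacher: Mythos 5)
Your proof is correct, but there is nothing in the paper to compare it against: the paper does not prove this theorem, it imports it from the literature (see \cite[Theorem~A.1]{HKMW}, with generalizations in \cite{KM}), so your blind reconstruction is filling in an argument the authors outsource. The reconstruction itself is sound and complete. The preliminary assertion is handled correctly: local finiteness and positivity of $w$ on edges give $d_w(x,y)\geq r_x:=\min\{w(x,z)\mid x\sim z\}>0$ for $y\neq x$, which yields both the metric property and discreteness of the topology. Organizing the equivalences around (iii) is the natural choice, and the three easy implications (iii)$\Leftrightarrow$(iv), (iii)$\Rightarrow$(i), (iii)$\Rightarrow$(ii) are argued correctly, including the observation that an infinite path confined to finitely many vertices must repeat a fixed edge of positive weight infinitely often. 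The crux, extracting from an infinite ball $B_R^{d_w}(x_0)$ an infinite path of length at most $R+1$, is also handled correctly: you identified the two points where such K\H{o}nig-type arguments usually fail, namely premature termination (ruled out because the chosen paths $\gamma_v$ have pairwise distinct endpoints, so at most one surviving path can end at the current vertex) and accumulation of length (ruled out because every initial segment of the extracted path is an initial segment of some $\gamma_v$, hence of length below $R+1$, so the bound is inherited in the limit rather than summed). The resulting path simultaneously refutes (ii) and, being Cauchy with infinitely many distinct vertices in a discrete space, refutes (i), which closes the cycle. This is, to my knowledge, essentially the standard argument behind the cited result.
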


 Boundaries of graphs can not only arise from metric
completions but also from compactifications. In fact, they can arise
whenever the set $X$ underlying the graph is suitable extended.  We
finish this section with a short discussion of this aspect. Let $X$
be a countable set. Let $Y$ be a topological Hausdorff space.
 We say that $X$
\textit{embeds densely  } in the topological space $Y$ if $Y$
contains a copy of $X$, the restriction of the topology of $Y$ on
$X$ is the discrete topology, and $X$ is dense in $Y$.  Clearly, $Y$ must be separable whenever $X$ embeds
densely in it. Whenever $X$ embeds densely in $Y$ we define the
\textit{boundary} $\partial_Y X$  of $X$ in $Y$ by
$$\partial_Y X:= Y\setminus X.$$
The complement (in $Y$)  of any finite subset of $X$ is
open in $Y$ (as any finite set is compact  and then must be closed
due to Hausdorffness). Hence, any  such a complement is an open
neighborhood of $\partial_Y X$. In particular, any function $h$ with
finite support on $X$ can be extended (by zero) to a continuous
function on $Y$.

Clearly, $X$ embeds densely in  $\overline{X}^\sigma$ whenever
$\sigma$ is a metric on $X$ inducing the discrete topology. This is
what we have discussed above. Then,
$$\partial_\sigma X = \partial_{\overline{X}^\sigma} X$$
holds.

If $X$ embeds densely in a  compact $Y$, then $Y$ is called a
\textit{compactification} of $X$. In this case the  open
neighborhoods of $\partial X$ are exactly given by the complements
of finite sets of $X$ (as the complement of any open  neighborhood
of $\partial_Y X$ must be a closed, and hence, compact subset of
$X$). A particular instance  is given by the
\textit{one-point-compactification}. It is given by the set $Y=X\cup
\{\mbox{pt}\}$, where $\mbox{pt}$ is an arbitrary additional point,
and this set is equipped  with topology given by the family of all
subsets of $Y$ that are either subsets of $X$ or whose complement is
finite. In this case the boundary of $\partial_Y X$ is just
$\mbox{pt}$.

\section{Capacity of sets in the boundary and infinite paths}\label{section:capacity v.s paths}
In this section we introduce the capacity and study the capacity of
sets in the boundary with respect to an intrinsic metric.

Let $G = (X,b)$ be a graph and let $m \colon X \to (0,\infty)$ be a
measure. The \emph{capacity} of a subset $U\subseteq X$ is defined
by
$${\rm cap}_m(U) := \inf \{ \aV{f}_{Q,m}^2 \mid f\in H^1(G,m) \text{ with } f\geq  1 \text{ on } U\},$$
with the convention that ${\rm cap}_m(U) = \infty$ if the set in the above definition is empty. Using the fundamental contraction property, we can assume $0\leq f\leq 1$ in this definition, since $ (f \wedge 1)_+$ satisfies the same constraints  as $f$ but reduces the $\aVd_{Q,m}$-norm compared to $f$.

Whenever $X$ embeds densely in  $Y$ we can extend the capacity
to subsets of $Y$ by setting
$$ {\rm cap}_m(A) := \inf \{ {\rm cap}_m(O\cap X) \mid  O \text{ open in } Y \text{ with }  A \subseteq O\}. $$
%
%
%
%
%
Since by assumption every subset $A \subseteq X$ is open  in $Y$ (as
 the topology of $Y$ induces the discrete topology on $X$)
 both definitions of capacity on $X$  are compatible. This
 definition can in
 particular be applied to the completion $\overline{X}^\sigma$,
 whenever the metric $\sigma$ induces the discrete topology on $X$.

The capacity is an outer measure on the power set of $Y$ with $m(A)
\leq {\rm cap}_m(A)$ for all $A \subseteq X$ and ${\rm cap}_m(Y)
\leq m(X)$, see e.g.   \cite[Theorem ~2.1.1 and Theorem~A.1.2]{FOT}.

Next we discuss how the vanishing of the capacity of subsets of the
boundary can be characterized with limits of functions of finite
energy.

\begin{definition}[Limes inferior]
Let $X$ be a countable set, let $\sigma$ be a metric on $X$ that
induces the discrete topology and let $f\colon X\rightarrow
\mathbb{R}$. For  $A\subseteq \overline{X}^\sigma$ the \emph{limes
inferior of $f$ at $A$ with respect to $\sigma$}   is defined by
 $$\liminf\limits_{x\rightarrow A} f(x) := \sup \{\inf\{f(x) \mid x\in U\cap X\} \mid U\ \mathrm{open}\ \mathrm{in}\ \overline{X}^{\sigma} \text{ with }A\subseteq U \}.$$
 Moreover, we define the {\em limes inferior at infinity} by
 $$ \liminf\limits_{x\rightarrow\infty} f(x) := \sup\{\inf\{f(x)\mid x\in X\setminus F\}\mid F\subseteq X\ \mathrm{finite}\}. $$
\end{definition}

\begin{remark}
 For us the case where $\liminf$ equals $\infty$ and the case of compact $\overline{X}^\sigma$ is
 is particularly relevant. In this context we note the following.
  \begin{enumerate}[(a)]
\item We have  $\liminf_{x\rightarrow A} f(x) = \infty$ if and only if
 $\lim_{n
\to \infty} f(x_n) = \infty$ for each  sequence $(x_n)$ in $X$ with
$\sigma(x_n,A)\to 0$, $n\to\infty$.

   \item We have $\liminf_{x\to\infty} f(x) = \infty$ if and only if
   $\lim_{n\to \infty } f(x_n) =\infty$ for any sequence $(x_n)$
   converging to $\mbox{pt}$ in the one-point-compactification of
   $X$. In fact, this easily shows that $\liminf_{x\to\infty} f(x)
   =\infty$ if and only if $\lim_{n\to\infty} f(x_n) =\infty$ for
   any sequence $(x_n)$ converging to some $y\in \partial_Y X$, where $Y$ is a
   compactification of $X$.

\item   If $\sigma$ induces the discrete topology on $X$ and
$(X,\sigma)$ is pre-compact, then every every open neighborhood $U$
of $\partial_\sigma X$ in $\overline{X}^{\sigma}$  has the form $U =
X \setminus F$ for some finite $F \subseteq X$. Hence, in this case
 $$\liminf_{x\rightarrow\infty} f(x) = \liminf_{x\rightarrow\partial_\sigma X} f(x).$$
In this sense, the limes inferior at infinity is the limes inferior
at the boundary for metric compactifications of $X$.
\end{enumerate}
\end{remark}

 It turns out that  $\liminf_{x\to \infty}$  governs
$\liminf_{x\to \partial_\sigma X}$ in the following sense.

\begin{proposition}\label{p:xtoinftystronger}
Let $X$ be a countable set. Let $f: X\to \mathbb{R}$ be
given. Then, 
$$\liminf_{x\to \partial_\sigma X} f(x) \geq \liminf_{x\to \infty} f(x)$$
for any metric $\sigma$ on $X$ that induces the discrete topology.
\end{proposition}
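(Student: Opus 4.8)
The plan is to read off both sides directly from their definitions as suprema and, for every finite set $F \subseteq X$, to match the complement $X \setminus F$ with the trace on $X$ of a single open neighborhood of $\partial_\sigma X$ in $\overline{X}^\sigma$. Concretely, I would fix a finite set $F \subseteq X$ and consider the set $U := \overline{X}^\sigma \setminus F$, aiming to show that $U$ is admissible in the supremum defining $\liminf_{x\to\partial_\sigma X} f(x)$ and that its trace on $X$ is exactly $X \setminus F$. Once this is established, the desired inequality follows by taking suprema.

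First I would record two facts about the completion. Since $\sigma$ is a metric on $X$ (and not merely a pseudometric), the quotient $X/\simeq$ appearing in the definition of the boundary is just $X$ itself, so $X$ and $\partial_\sigma X$ are disjoint subsets of $\overline{X}^\sigma$; in particular $F \cap \partial_\sigma X = \emptyset$. Moreover, $\overline{X}^\sigma$ carries the extended metric $\sigma$ and is therefore Hausdorff, so the finite set $F$ is closed in $\overline{X}^\sigma$. Consequently $U = \overline{X}^\sigma \setminus F$ is open and contains $\partial_\sigma X$, which makes $U$ one of the open sets over which the supremum in the definition of $\liminf_{x\to\partial_\sigma X} f(x)$ is taken. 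Computing its trace, I would use $X \subseteq \overline{X}^\sigma$ and $F \subseteq X$ to conclude $U \cap X = X \setminus F$, whence $\inf\{f(x) \mid x \in U \cap X\} = \inf\{f(x) \mid x \in X \setminus F\}$. By the definition of the limes inferior at the boundary this gives $\inf_{x \in X \setminus F} f(x) \leq \liminf_{x\to\partial_\sigma X} f(x)$, and taking the supremum over all finite $F \subseteq X$ yields precisely $\liminf_{x\to\infty} f(x) \leq \liminf_{x\to\partial_\sigma X} f(x)$.

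The argument is essentially a matter of matching definitions, and I do not expect a genuine obstacle. The only points requiring care are the verification that $\overline{X}^\sigma$ is Hausdorff, so that the finite set $F$ is closed and its complement is open, and that $\partial_\sigma X$ is disjoint from $X$, so that $U$ actually contains the whole boundary. Both rest on $\sigma$ being a genuine metric inducing the discrete topology rather than a mere pseudometric; this is exactly the hypothesis of the proposition, so it is worth stating explicitly but is not a difficulty. One should note only that the collection of open neighborhoods of $\partial_\sigma X$ used on the left is in general strictly larger than the collection $\{\overline{X}^\sigma \setminus F : F \subseteq X \text{ finite}\}$ produced here, which is precisely why the inequality need not be an equality.
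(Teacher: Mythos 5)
Your proposal is correct and follows essentially the same route as the paper's proof: both observe that for each finite $F \subseteq X$ the set $\overline{X}^\sigma \setminus F$ is an open neighborhood of $\partial_\sigma X$ (the paper via compactness of $F$ plus Hausdorffness, you via finite sets being closed in the metric space $\overline{X}^\sigma$) whose trace on $X$ is $X \setminus F$, and then compare the two suprema. Your extra remarks on the disjointness of $X$ and $\partial_\sigma X$ and on why the inequality can be strict are fine but do not change the argument.
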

\begin{proof}
Any finite set $F$ in $X$ is compact in $\overline{X}^\sigma$ and,
hence, closed. Thus, for any finite set $F$ in $X$ the set
$\overline{X}^\sigma \setminus F$  is an open neighborhood of
$\partial_\sigma X$. We obtain
\begin{align*}
\liminf\limits_{x\rightarrow \partial_\sigma X} f(x)
&= \sup \{ \inf\{f(x) \mid x\in U\cap X\}
\mid U \text{ open  with } \partial_\sigma X\subseteq U \} \\
&\geq  \sup \{\inf\{f(x) \mid x\in X\setminus F \cap X\} \mid  F\subseteq X\text{ finite} \} \\
&\geq  \liminf_{x\to \infty} f(x). \qedhere
\end{align*}
%
\end{proof}

With the help of the limes inferior we can characterize sets of
capacity zero in the boundary.

\begin{lemma}[Characterization of zero capacity sets in the boundary]
 \label{capacity-trick}
 Let $G=(X,b)$ be an infinite graph
and    $\sigma$ be a metric on $G$ that induces the discrete
topology. Further, let $A\subseteq \partial_\sigma X$. The following
assertions are equivalent:
 \begin{enumerate}[(i)]
  \item For one finite measure $m$ on $X$ we have ${\rm cap}_m(A) = 0$.
  \item For all finite measures $m$ on $X$ we have ${\rm cap}_m(A) = 0$.
  \item There exists $f\in\mathfrak{D}(G)$ with  $\liminf\limits_{x\rightarrow A} f(x) = \infty.$
 \end{enumerate}

\end{lemma}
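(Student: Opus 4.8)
The plan is to establish the cycle of implications $(i)\Rightarrow(iii)\Rightarrow(ii)\Rightarrow(i)$. The last implication is essentially free: since $X$ is countable, enumerating $X=\{x_1,x_2,\dots\}$ and setting $m(x_n)=2^{-n}$ produces a finite measure, so the universal statement $(ii)$ specializes to the existential statement $(i)$. The content lies in the other two implications, both of which are driven by the interplay between the fundamental contraction property of $Q$ (Proposition~\ref{theorem:fundamental contraction property}) and its pointwise lower semicontinuity.

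For $(iii)\Rightarrow(ii)$ I would first replace $f$ by $f_+=f\vee 0$, which lies in $\mathfrak{D}(G)$ with $Q(f_+)\le Q(f)$ and still diverges at $A$ since near $A$ the function is large and positive. Fix a finite measure $m$ and set $h_n:=(\tfrac1n f_+)\wedge 1\in[0,1]$. Since $\liminf_{x\to A}f_+=\infty$, for each $n$ there is an open $U_n\supseteq A$ with $f_+>n$, hence $h_n=1$, on $U_n\cap X$; thus $h_n$ is an admissible competitor for ${\rm cap}_m(U_n\cap X)$, lying in $H^1(G,m)$ as it is bounded with $Q(h_n)\le n^{-2}Q(f_+)<\infty$. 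The contraction property gives $Q(h_n)\le n^{-2}Q(f_+)\to 0$, while $h_n\to 0$ pointwise with $0\le h_n\le 1$ and $m$ finite, so $\aV{h_n}_m^2\to 0$ by dominated convergence. Hence $\aV{h_n}_{Q,m}^2\to 0$, and by monotonicity of the capacity ${\rm cap}_m(A)\le {\rm cap}_m(U_n\cap X)\le \aV{h_n}_{Q,m}^2\to 0$. As $m$ was arbitrary, $(ii)$ follows.

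For $(i)\Rightarrow(iii)$ suppose ${\rm cap}_m(A)=0$ for some finite $m$. For each $n$ I would choose, from the definition of capacity on $\overline{X}^\sigma$, an open $O_n\supseteq A$ and a competitor $f_n\in H^1(G,m)$ with $f_n\ge 1$ on $O_n\cap X$ and $\aV{f_n}_{Q,m}^2<4^{-n}$; the contraction property lets me assume $0\le f_n\le 1$. Then set $f:=\sum_{n\ge 1}f_n$. Convergence is controlled by the geometric decay: from $\aV{f_n}_{Q,m}<2^{-n}$ one reads off $f_n(x)\le m(x)^{-1/2}2^{-n}$, so the series converges pointwise to a finite $f$, and the partial sums $g_N=\sum_{n\le N}f_n$ satisfy $Q(g_N)^{1/2}\le\sum_n Q(f_n)^{1/2}<1$, whence $Q(f)\le 1$ by semicontinuity and $f\in\mathfrak{D}(G)$. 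For the divergence, fix $M$ and consider the finite intersection $O:=\bigcap_{n\le M}O_n$, which is an open neighbourhood of $A$; on $O\cap X$ every $f_n$ with $n\le M$ is at least $1$, so $f\ge\sum_{n\le M}f_n\ge M$ there. Thus $\inf\{f(x)\mid x\in O\cap X\}\ge M$ for every $M$, giving $\liminf_{x\to A}f=\infty$.

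The main obstacle is precisely the simultaneous control in $(i)\Rightarrow(iii)$: one must arrange that the summed function has finite energy \emph{and} blows up along $A$. Both hinge on the rapid decay $\aV{f_n}_{Q,m}^2<4^{-n}$, which makes the energy seminorm summable (so that semicontinuity delivers finite total energy) while the nesting of neighbourhoods through finite intersections forces arbitrarily many competitors to be simultaneously $\ge 1$ near $A$. The one point demanding care is that the capacity of $A$ is defined via open supersets in $\overline{X}^\sigma$, so the $f_n$ must be extracted from genuinely open neighbourhoods $O_n$ of $A$ rather than from arbitrary supersets inside $X$; this is exactly what makes the finite-intersection argument produce an \emph{open} neighbourhood on which the partial sum is large, as required by the definition of $\liminf_{x\to A}$.
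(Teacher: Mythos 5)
Your proof is correct, and while it follows the same cycle $(i)\Rightarrow(iii)\Rightarrow(ii)\Rightarrow(i)$ as the paper, the key implication $(iii)\Rightarrow(ii)$ is handled by a genuinely different and more elementary device. The paper replaces $f$ by $|f|$ and decomposes it into the unit slices $f_n = (f-n)_+\wedge 1$, so that each slice is an admissible competitor, and then must show $Q(f_n)\to 0$; this requires the observation that the cross terms $Q(f_n,f_m)$ are nonnegative (the increments of different slices always have equal signs), which yields
$$\sum_{n=0}^{N-1} Q(f_n) \leq Q\left(C_{[0,N]}f\right) \leq Q(f)$$
and hence the desired decay. You instead use the rescaled truncations $h_n = (f_+/n)\wedge 1$, for which the bound $Q(h_n) \leq n^{-2}Q(f_+)$ is immediate from the quadratic homogeneity of $Q$ together with the contraction property --- no cross-term analysis at all. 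Both routes produce competitors equal to $1$ on open neighbourhoods of $A$ whose $\ell^2(X,m)$-norms vanish by dominated convergence, so both close the implication; the paper's slicing buys the stronger quantitative fact that the slice energies are summable (recorded in the remark after the lemma as a superadditivity inequality for sums of monotone contractions of $f$), which your scaling argument does not recover, but for the lemma itself your route is shorter. Your $(i)\Rightarrow(iii)$ matches the paper's argument up to cosmetics: you impose the geometric decay $\aV{f_n}_{Q,m}^2 < 4^{-n}$ at the outset and conclude $Q(f)<\infty$ from pointwise convergence of the partial sums plus lower semicontinuity, where the paper passes to a subsequence with summable norms and sums in the Hilbert space $H^1(G,m)$; both correctly use finite intersections of the open sets $O_n$ to force the divergence at $A$. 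Finally, your explicit construction of a finite measure on the countable set $X$ for $(ii)\Rightarrow(i)$ fills in what the paper dismisses as clear, and is a legitimate (indeed necessary) point, since $(ii)$ would be vacuous if no finite measure existed.
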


\begin{proof}
 (i) $\Rightarrow$ (iii): The statement ${\rm cap}_m(A) = 0$
 implies the existence of sequences of open sets $U_n\supseteq A$ and functions $f_n \geq 1_{U_n}$ that satisfy
 $$\lim\limits_{n\rightarrow \infty}\aV{f_n}_{Q,m} = \lim\limits_{n\rightarrow\infty}\sqrt{Q(f_n) + \aV{f_n}^2_m} = 0.$$
 By restricting to a subsequence we can assume without loss of generality that
 $$\sum\limits_{n\in\mathbb{N}} \aV{f_n}_{Q,m} < \infty.$$
 This implies that the sum $f := \sum_{n\in\mathbb{N}} f_n$
 converges in the Hilbert space $H^1(G,m)$. In particular, $f \in \mathfrak{D}(G)$.  By the choice of the $f_n$ we have $f \geq N$ on the set $\bigcap_{n=1}^N U_n,$ which is an open set that contains $A$. This proves that $\liminf_{x\rightarrow A} f(x)$ is at least $N$. Since this is true for all $N\in\mathbb{N}$, the supremum has to be infinite.

 (iii) $\Rightarrow$ (ii): Let $f \in \mathfrak{D}(G)$ satisfy  (iii). Without loss of generality we  assume $f \geq 0$, for otherwise we can replace $f$ by $|f|$, which also has finite energy due to Theorem~\ref{theorem:fundamental contraction property}.  We slice $f$  into the parts
 $$f_n := (f-n)_+ \wedge 1.$$
First, we observe that $0\leq f_n\leq 1$ and $f = \sum_{n=0}^\infty
f_n $ pointwise. Moreover, Theorem~\ref{theorem:fundamental
contraction property} yields $f_n \in \mathfrak{D}(G)$ and since $m$
is finite  and the $f_n$ are bounded,  we have $f_n \in
H^1(G,m)$. Because $\liminf_{x\rightarrow A} f(x) = \infty$, for
every $n\in\mathbb{N}_0$ there is an open set  $U_n \supseteq A$
such that $f \geq n$ on $U_n \cap X$. By construction, for $x \in X$
the inequality $f(x) \geq n+1$ implies  $f_n(x) = 1$. Combining
these observations we conclude  $f_n \geq 1_{X \cap U_{n+1}}.$

 Altogether, this shows that the functions $f_n$ are usable in the definition of the capacity of $A$ and
 $${\rm cap}_m(A) \leq \aV{f_n}_{Q,m}.$$
 We prove $\aV{f_n}_{Q,m} \to 0$, as $n \to \infty.$

 It is readily verified that for $n \neq m$ and $x,y \in X$ the product $(f_m(x)-f_m(y))(f_n(x)-f_n(y))$ is always nonnegative, so that
$$Q(f_n,f_m) = \frac{1}{2}\sum_{x,y \in X} b(x,y) (f_m(x)-f_m(y))(f_n(x)-f_n(y)) \geq 0.$$
 Recall that for $N \in \N$ we defined $C_{[0,N]}f = f_+ \wedge N $. Using Proposition~\ref{theorem:fundamental contraction property}, the definition of $f_n$ and the previous observation, we obtain
 \begin{align*}
\sum\limits_{n=0}^{N-1} Q(f_n) &\leq    \sum\limits_{n=0}^{N-1} Q(f_n) + \sum\limits_{\substack{0\leq m,n \leq  N-1\\ m\neq n}} Q(f_m,f_n)\\
&= Q\left(\sum\limits_{n=0}^{N-1} f_n\right) = Q(C_{[0,N]}f) \leq Q(f).
 \end{align*}
%
%
%
%
%
Since $N$ was arbitrary and $Q(f) < \infty$, we arrive at $\lim_{n\rightarrow\infty} Q(f_n) = 0$. The convergence
 $\lim_{n\rightarrow\infty}\aV{f_n}_m = 0$
 follows from Lebesgue's dominated convergence theorem.
 This leads to $\lim_{n\rightarrow\infty} \aV{f_n}_{Q,m} = 0$ and thus, ${\rm cap}_m(A) = 0.$

(ii) $\Rightarrow$ (i): This is clear.
\end{proof}

\begin{remark}
\begin{enumerate}[(a)]
 \item The lemma shows that having capacity zero does not depend on the
choice of the finite measure. Indeed, we do not even need to assume
that $m$ is strictly positive. If we only assume that $m(o) > 0$ for
one $o \in X$, the space  $H^1(X,m)$ continuously embeds into
$(\mathfrak{D}(G),\aVd_o)$ and the proof can be carried out in the
space $(\mathfrak{D}(G),\aVd_o)$. The
advantage of working in $H^1(G,m)$ is that it is related to
intrinsic metrics with respect to $m$.
 \item The inequality used in the proof of the implication (iii) $\Rightarrow$  (ii) can be extended to a more general form.
 Let $f:X\rightarrow \mathbb{R}$ be a function of finite energy and let $C_1,C_2:\mathbb{R}\rightarrow \mathbb{R}$ be two monotone increasing $1$-Lipschitz functions. Then
 $$Q(C_1\circ f + C_2 \circ f) \geq Q(C_1\circ f) + Q(C_2\circ f).$$
 %
 In the above proof this observation is applied to the monotone increasing contractions $S_n, n\in\mathbb{N}_0$, with $S_n(x) = (x - n)_+ \wedge 1.$
\end{enumerate}
\end{remark}

One can understand the preceding result also as saying that the sets $ A\subseteq\partial_\sigma X$ with zero capacity are infinitely far away from any finite set. More specifically, the following holds.

\begin{corollary}[Capacity zero sets in the boundary  have infinite distance]\label{cor-infinite-distance} Let $G=(X,b)$ be an infinite graph and $\sigma$ a metric on $G$ that induces the discrete topology. Further, let $A\subseteq \partial_\sigma X$. Then, $\partial_\sigma A$ has zero capacity (with respect to any finite measure)  if and only if there exists an intrinsic metric $\varrho \in \mathfrak{M}(G)$ such that for any finite $F$ in $X$ and any  $r>0$ there exists an open neighborhood $U$ of $A$ with
$$\varrho(U\cap X,F):=\inf \{ \varrho(z,x) \mid z\in U\cap X, x\in F\} \geq r.$$
\end{corollary}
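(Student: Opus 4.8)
The plan is to reduce everything to Lemma~\ref{capacity-trick}, which characterizes the capacity-zero condition on $A$ by the existence of a function $f\in\mathfrak{D}(G)$ with $\liminf_{x\to A}f(x)=\infty$, and then to translate between such functions and intrinsic pseudometrics using Lemma~\ref{potential2} (from $\mathfrak{D}(G)$ to $\mathfrak{M}(G)$) and Lemma~\ref{potential} (Lipschitz functions of an intrinsic metric have finite energy). Once this dictionary is in place, both implications become short.

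For the forward direction (capacity zero $\Rightarrow$ separating metric) I would start from an $f\in\mathfrak{D}(G)$ with $\liminf_{x\to A}f(x)=\infty$ as supplied by Lemma~\ref{capacity-trick}. Replacing $f$ by $|f|$ — which stays in $\mathfrak{D}(G)$ by the fundamental contraction property (Proposition~\ref{theorem:fundamental contraction property}) and whose limes inferior at $A$ is still $\infty$ since $f$ is eventually nonnegative near $A$ — I may assume $f\geq 0$. I then set $\varrho:=\sigma_f$, which by Lemma~\ref{potential2} is an intrinsic pseudometric with respect to the finite measure $m_f$, so $\varrho\in\mathfrak{M}(G)$. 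Given a finite $F\subseteq X$ and $r>0$, put $M:=\max_{x\in F}f(x)$; since $\liminf_{x\to A}f=\infty$, there is an open neighborhood $U$ of $A$ in $\overline{X}^\sigma$ with $f\geq M+r$ on $U\cap X$. Then for $z\in U\cap X$ and $x\in F$ we obtain $\varrho(z,x)=|f(z)-f(x)|\geq f(z)-f(x)\geq r$, whence $\varrho(U\cap X,F)\geq r$, as required.

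For the reverse direction I would fix $o\in X$, apply the hypothesis with $F=\{o\}$, and consider $g:=\varrho(\cdot,o)$. This $g$ is $1$-Lipschitz with respect to $\varrho$, so since $\varrho\in\mathfrak{M}(G)$ is intrinsic with respect to a finite measure, Lemma~\ref{potential}(c) gives $g\in\mathfrak{D}(G)$. The separating property, specialized to $F=\{o\}$, says exactly that for every $r>0$ there is an open neighborhood $U$ of $A$ with $g(z)=\varrho(z,o)\geq r$ for all $z\in U\cap X$, i.e. $\inf\{g(z)\mid z\in U\cap X\}\geq r$; hence $\liminf_{x\to A}g(x)=\infty$. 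Lemma~\ref{capacity-trick} then shows that $A$ has capacity zero with respect to every finite measure.

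The computations are routine, and the main point to watch is the consistency of the two topologies in play: both in the definition of the limes inferior and in the separating condition the neighborhoods $U$ are taken open in $\overline{X}^\sigma$ for the fixed metric $\sigma$ inducing the discrete topology, whereas distances are measured with the auxiliary metric $\varrho$; I would keep these two roles strictly apart. A minor cosmetic issue is that $\sigma_f$ need not be a genuine metric: if one insists on a metric rather than merely an element of $\mathfrak{M}(G)$, I would replace $\varrho$ by $\sigma_f+\sigma_0$, with $\sigma_0$ the discrete-topology metric of Lemma~\ref{disc-top} (alternatively, perturb $f$ to be injective via Proposition~\ref{p:smallperturbation}). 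Since $\sigma_f+\sigma_0\geq\sigma_f$ the separation estimate is preserved, and the sum remains intrinsic with respect to a finite measure because $(\sigma_f+\sigma_0)^2\leq 2\sigma_f^2+2\sigma_0^2$.
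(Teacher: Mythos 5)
Your proof is correct, and the forward direction is essentially the paper's own argument: obtain $f$ with $\liminf_{x\to A}f(x)=\infty$ from Lemma~\ref{capacity-trick}, pass to $\varrho=\sigma_f$ via Lemma~\ref{potential2}, and separate $U\cap X$ from $F$ using $\varrho(z,x)=|f(z)-f(x)|$; your fallback of perturbing $f$ to be injective via Proposition~\ref{p:smallperturbation} to get a genuine metric is precisely what the paper does, while your $\sigma_f+\sigma_0$ alternative is a valid variant. The reverse direction, however, is genuinely different. The paper argues by a direct capacity estimate: for each finite $F$ it takes the cutoff $g_F=\varrho_F\wedge 1$, which is $0$ on $F$ and $1$ on a suitable neighborhood of $A$, and uses Lemma~\ref{potential}(b) to get the quantitative bound ${\rm cap}_m(A)\leq Q(\varrho_F)+m(X\setminus F)\leq 3\,m(X\setminus F)$, then lets $F$ exhaust $X$ and invokes Lemma~\ref{capacity-trick} only to pass from the one measure $m$ to all finite measures. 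You instead build a single finite-energy function $g=\varrho(\cdot,o)$ (finite energy by Lemma~\ref{potential}(c)), show $\liminf_{x\to A}g(x)=\infty$, and invoke the implication (iii)~$\Rightarrow$~(ii) of Lemma~\ref{capacity-trick} directly. Your route is more unified --- both directions pass through the $\liminf$ characterization --- and it reveals that the separation hypothesis is only needed for the single set $F=\{o\}$, so the corollary's condition ``for any finite $F$'' can be weakened to one point; what it gives up is the paper's explicit numerical estimate ${\rm cap}_m(A)\leq 3\,m(X\setminus F)$, which quantifies how fast the capacity of neighborhoods of $A$ decays along an exhaustion. You are also right to flag the two topologies: neighborhoods are open in $\overline{X}^\sigma$ while distances are measured in $\varrho$, and your proof keeps this straight.
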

\begin{proof} Assume that $A$ has capacity zero (with respect to any finite measure). By the previous lemma,  there exists an $f\in\mathfrak{D}(G)$ with $\liminf_{x\to A} f(x) =\infty$. Without loss of generality we can assume $f(x) \neq f(y)$ for all $x,y\in X$ with $x\neq y$ (else we could add an arbitrary small perturbation by Proposition \ref{p:smallperturbation}). Then, $\varrho:=\sigma_f$ is an intrinsic metric with respect to a finite measure. By $\liminf_{x\to A} f(x) = \infty$ the metric $\varrho$ has the given
property.

Assume now that there exists an intrinsic metric $\varrho$ with respect to the finite measure $m$ that has the given property. Let an arbitrary finite set $F$ be given. Then, there exists an open neighborhood $U$ of $A$ with $\varrho (U\cap X,F)\geq 1$. Hence, $g_F:= \varrho_F \wedge 1$ satisfies $0\leq g_F\leq 1$, equals $0$ on $F$ and  equals $1$ on $U$. Hence, 
$${\rm cap}_m (A)\leq Q(g_F) + \av{g_F}_m^2 \leq Q(\varrho_F) + m (X\setminus F) \leq 3 m(X\setminus F)$$
holds, where we used Lemma \ref{potential} in the last step. As this holds for arbitrary $F$ we infer ${\rm cap}_m (A) =0$. By the previous lemma this implies that the capacity of $A$ vanishes with respect to any finite measure.
\end{proof}

\begin{remark} Replacing $\varrho$ by $d_\varrho \leq\varrho$ we can even take the  intrinsic metric in the preceding corollary to be  a path metric.
\end{remark}

\begin{remark}
Lemma \ref{capacity-trick} and its corollary deal with metric completions of $X$. However, they  can directly  be extended to any topological Hausdorff space $Y$ in which $X$  embeds densely. Indeed, both the definition of $\liminf_{x\to A}$ and the proofs of the lemma and its corollary  carry verbatim  over to this more general situation. This means in particular that these considerations  also holds for  compactifications of $X$.
\end{remark}

Next we discuss how the capacity of sets in the metric boundary is related to infinite paths. We recall the following standard notion for sets of infinite paths in a graph.
\begin{definition}[Null set of paths]
 A set of infinite paths $\Gamma$ in $G = (X,b)$ is called {\em null} if there exists an edge weight $w$ with
$$\sum_{x,y \in X}b(x,y)w(x,y)^2 < \infty$$
such that $L_w(\gamma) = \infty$ for all $\gamma \in \Gamma$.
\end{definition}
Let $\sigma$ be a metric on $X$ that induces the discrete topology. For $A \subseteq \partial_{\sigma} X$ we denote by $\Gamma_{A,\sigma}$ the set of infinite paths which have at least one accumulation point with respect to $\sigma$ lying in $A$.

With the help of our characterization of sets of capacity zero in 
the boundary  we obtain the following relation between sets of
capacity zero in the boundary and sets of paths with accumulation
point in this set.

 This is the key observation relating our approach to the
classical approach to recurrence by means of null sets of paths. As
noted in the introduction, this observation was our motivation to
write this paper.

\begin{theorem}[Capacity and null sets of paths]\label{theorem:capacity vs paths}
Let $G  = (X,b)$ be an infinite graph and let $\sigma$ be an
intrinsic metric with respect to a finite measure $m$ that induces
the discrete topology. Let $A \subseteq \partial_\sigma X$  with
${\rm cap}_m(A) = 0$.  Then $ \Gamma_{A,\sigma}$ is null.
\end{theorem}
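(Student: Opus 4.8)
The plan is to produce an explicit edge weight witnessing that $\Gamma_{A,\sigma}$ is null, built out of a diverging function of finite energy. First I would invoke the hypothesis ${\rm cap}_m(A) = 0$ together with the implication (i) $\Rightarrow$ (iii) of Lemma~\ref{capacity-trick} to obtain a function $f \in \mathfrak{D}(G)$ with $\liminf_{x\to A} f(x) = \infty$. This single function encodes the divergence toward $A$ that I want to exploit.

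Next I would turn $f$ into an admissible edge weight. The natural candidate is $\sigma_f(x,y) = |f(x)-f(y)|$, which by Lemma~\ref{potential2} is an intrinsic pseudometric with respect to the finite measure $m_f$ satisfying $m_f(X) = Q(f) < \infty$; in particular $\sum_{x,y} b(x,y)\sigma_f(x,y)^2 = 2Q(f) < \infty$, which is exactly the summability required in the definition of a null set. The only defect is that $\sigma_f$ need not be strictly positive on edges (two neighbours may share the same $f$-value), so it may fail to be an edge weight. I would repair this by adding the given intrinsic metric $\sigma$ itself, setting $w := \sigma + \sigma_f$: here $\sigma > 0$ off the diagonal, and $\sum_{x,y} b(x,y)\sigma(x,y)^2 \le 2m(X) < \infty$ since $\sigma$ is intrinsic, so $\sum_{x,y} b(x,y)w(x,y)^2 < \infty$ by the elementary inequality $(s+t)^2 \le 2s^2 + 2t^2$. (Alternatively one could replace $f$ by an injective small perturbation via Proposition~\ref{p:smallperturbation}(b), which keeps $f$ of finite energy and leaves $\liminf_{x\to A} f = \infty$ unchanged, the perturbation being uniformly bounded.) Either way I arrive at an edge weight $w$ with $L_w(\gamma) \ge L_{\sigma_f}(\gamma)$ for every path $\gamma$ and with finite $\sum_{x,y} b(x,y)w(x,y)^2$.

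Finally I would check that every $\gamma = (x_1, x_2, \ldots) \in \Gamma_{A,\sigma}$ has $L_w(\gamma) = \infty$. By definition $\gamma$ has an accumulation point $\xi \in A$, so there is a subsequence $(x_{n_k})$ with $\sigma(x_{n_k},\xi) \to 0$, hence $\sigma(x_{n_k},A)\to 0$. Since $\liminf_{x\to A} f(x) = \infty$, the sequential characterization of the limes inferior (any sequence approaching $A$ in $\sigma$-distance forces $f \to \infty$) gives $f(x_{n_k}) \to \infty$. The total variation of $f$ along $\gamma$ then dominates $|f(x_{n_k}) - f(x_1)|$ for each $k$, via the triangle inequality
$$L_{\sigma_f}(\gamma) = \sum_i |f(x_i)-f(x_{i+1})| \ge \sum_{i=1}^{n_k-1}|f(x_i)-f(x_{i+1})| \ge |f(x_{n_k}) - f(x_1)|,$$
and letting $k\to\infty$ shows $L_{\sigma_f}(\gamma) = \infty$, whence $L_w(\gamma) = \infty$. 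This exhibits $w$ as the required edge weight and proves $\Gamma_{A,\sigma}$ null.

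The step I expect to be the main obstacle is the passage from the topological notion \emph{$\gamma$ accumulates at a point of $A$} to the analytic conclusion \emph{$f$ diverges along a subsequence of $\gamma$}: it hinges on correctly matching the definition of accumulation point in $\overline{X}^\sigma$ with the sequential characterization of $\liminf_{x\to A} f = \infty$, and on the observation that the lower bound $L_{\sigma_f}(\gamma)\ge |f(x_{n_k})-f(x_1)|$ already forces infinite length once the endpoints diverge. The remaining bookkeeping, namely verifying the summability and the positivity of $w$ on edges, is routine given the earlier lemmas.
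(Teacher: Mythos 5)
Your proposal follows the paper's own proof almost verbatim: extract $f\in\mathfrak{D}(G)$ with $\liminf_{x\to A}f(x)=\infty$ from Lemma~\ref{capacity-trick}, use $\sigma_f(x,y)=|f(x)-f(y)|$ as the weight, and bound the length of any path accumulating at $A$ from below by $|f(x_{n_k})-f(x_1)|$. Your fix for strict positivity on edges, taking $w=\sigma+\sigma_f$ and combining $(s+t)^2\le 2s^2+2t^2$ with the bound $\sum_{x,y}b(x,y)\sigma(x,y)^2\le 2m(X)$ coming from $\sigma$ being intrinsic with respect to the finite measure $m$, is a sound alternative to the paper's perturbation argument (which you also mention as a fallback); it even avoids Proposition~\ref{p:smallperturbation} altogether.

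The one step you must tighten is the divergence argument. You discard the information that $(x_{n_k})$ converges to the single point $\xi\in A$ and retain only $\sigma(x_{n_k},A)\to 0$, then invoke the sequential characterization of $\liminf_{x\to A}f=\infty$ in terms of sequences whose distance to $A$ tends to zero (the paper's unnumbered remark following the definition of the limes inferior). That principle is actually false in this generality when $A$ is not compact: $\liminf_{x\to A}f=\infty$ only provides, for each $N$, \emph{some} open $U\supseteq A$ with $f\ge N$ on $U\cap X$, and a sequence with $\sigma(x_n,A)\to 0$ need not eventually enter such a $U$. For instance, take $X=\{(1/k,1/(km)) \mid k,m\in\N\}\subseteq\R^2$ with the Euclidean metric, $A=\{(1/k,0)\mid k\in\N\}\subseteq\partial_\sigma X$, and $f(1/k,1/(km))=m$: then $\liminf_{x\to A}f=\infty$, as witnessed by unions of sufficiently small balls around the points of $A$, yet the sequence $x_k=(1/k,1/k)$ satisfies $\sigma(x_k,A)\to 0$ while $f(x_k)=1$ for all $k$. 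The repair is immediate and is what the paper implicitly does: since $x_{n_k}\to\xi$ in $\overline{X}^\sigma$ and every open $U\supseteq A$ is in particular an open neighborhood of $\xi$, the subsequence eventually lies in $U\cap X$, whence $\liminf_k f(x_{n_k})\ge\inf\{f(x)\mid x \in U\cap X\}$ for every such $U$; taking the supremum over $U$ gives $f(x_{n_k})\to\infty$. With this rewording your proof is complete and coincides with the paper's.
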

\begin{proof}
According to the previous lemma there is $f \in \mathfrak D (G)$
such that $\liminf_{x \to A} f(x) = \infty.$ We consider the
function $w \colon X \times X \to \R$, $w(x,y) = |f(x) - f(y)|$.
Without loss of generality we can assume $w(x,y) > 0$ whenever
$(x,y)$ is an edge (else at each vertex $x$ add a small quantity to
$f(x)$ if necessary, see Proposition \ref{p:smallperturbation}).
Then
$$\sum_{x,y \in X} b(x,y)w(x,y)^2 \leq 2 Q(f) < \infty.$$
Let $\gamma = (x_1,x_2,\ldots)$ be an infinite path with an accumulation point in $A$.  We obtain
$$|f(x_n) - f(x_1)| \leq \sum_{k = 1}^{n-1} |f(x_k) - f(x_{k+1})| \leq L_w(\gamma).$$
Since $\liminf_{x \to A} f(x) = \infty,$ the left hand side of this inequality diverges along a suitable subsequence and so we obtain $L_w(\gamma) = \infty$. Hence, $\Gamma_{A,\sigma}$ is null.
\end{proof}

The converse seems not to hold due to the complicated behavior of paths at metric boundaries of general graphs.  For trees however we have the following  converse for path metrics. Recall that $(X,b)$ is a tree if it does not have non-trivial cycles (injective paths $(x_1,\ldots,x_n)$ with $x_1 \sim x_n$).

\begin{proposition}
Let $G = (X,b)$ be a tree and let $\sigma$ be a path metric that induces the discrete topology on $X$ and is intrinsic with respect to a finite measure $m$. If for  $A\subseteq \partial_\sigma X$ the set of paths $\Gamma_{A,\sigma}$ is null, then ${\rm cap}_m(A) = 0$.
 \end{proposition}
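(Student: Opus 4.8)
I want to show that for a tree $G=(X,b)$ with a path metric $\sigma = d_w$ (for some symmetric $w$) inducing the discrete topology and intrinsic with respect to a finite measure $m$, if $\Gamma_{A,\sigma}$ is null then $\mathrm{cap}_m(A)=0$. By Lemma~\ref{capacity-trick} it suffices to produce a function $h \in \mathfrak D(G)$ with $\liminf_{x\to A} h(x) = \infty$.

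**The approach.** The hypothesis that $\Gamma_{A,\sigma}$ is null gives me an edge weight $u$ with $\sum_{x,y} b(x,y)u(x,y)^2 < \infty$ and $L_u(\gamma) = \infty$ for every infinite path $\gamma$ accumulating at $A$. The plan is to turn this length information into a function of finite energy that diverges toward $A$, and here the tree structure is what makes the construction work: fix a root $o \in X$; since $G$ is a tree, every vertex $x$ is joined to $o$ by a unique injective path, and I can define

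\begin{equation*}
h(x) := L_u\big(\gamma_{o,x}\big) = \sum_{i} u(y_i, y_{i+1}),
\end{equation*}

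where $\gamma_{o,x} = (o = y_0, y_1, \ldots, y_k = x)$ is that unique path. On a tree this $h$ is well defined and additive along the tree: if $x \sim y$ with $y$ the parent of $x$, then $h(x) - h(y) = u(x,y)$, so $|h(x) - h(y)| = u(x,y)$ for every edge. Consequently $Q(h) = \frac{1}{2}\sum_{x,y} b(x,y)u(x,y)^2 < \infty$, which gives $h \in \mathfrak D(G)$ directly.

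**Divergence toward $A$.** It remains to check $\liminf_{x\to A} h(x) = \infty$. By the remark following the definition of $\liminf$, this amounts to showing $h(x_n) \to \infty$ for every sequence $(x_n)$ in $X$ with $\sigma(x_n, A) \to 0$. The main work is to relate such a $\sigma$-convergent sequence to an honest infinite path accumulating at $A$, so that the length hypothesis bites. Since $\sigma = d_w$ is a path metric inducing the discrete topology, convergence of $(x_n)$ to a boundary point $\xi \in A$ forces the $x_n$ to escape every finite set, and on a tree the unique rooted paths $\gamma_{o,x_n}$ share longer and longer initial segments, whose union traces out a single infinite injective path $\gamma_\xi$ ending at $\xi$. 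The initial segment of $\gamma_{o,x_n}$ of $u$-length $\min\{h(x_n), N\}$ eventually agrees with the first $N$ worth of $\gamma_\xi$; thus $h(x_n) \geq$ the $u$-length of a growing initial piece of $\gamma_\xi$. Because $\gamma_\xi$ accumulates at $\xi \in A$, it lies in $\Gamma_{A,\sigma}$, so $L_u(\gamma_\xi) = \infty$, whence $h(x_n) \to \infty$.

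**The main obstacle.** The delicate point is precisely this last step: extracting the limit path $\gamma_\xi$ from the sequence and justifying that $h(x_n)$ tracks its length. One must rule out the pathology that $x_n \to \xi$ while the rooted paths $\gamma_{o,x_n}$ ``branch away'' infinitely often, never stabilizing on a common ray. I expect this is handled by the combination of the tree structure (unique geodesics, so branching is controlled by a genuine subtree) and geodesic completeness considerations: because $\sigma$ induces the discrete topology and is a path metric, a sequence converging in $\overline{X}^\sigma$ to a boundary point must, after passing to a subsequence, run out along a single ray of the tree, and the finiteness $\sum b(x,y)u(x,y)^2 < \infty$ prevents the rooted paths from having bounded length while escaping. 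Making the ``stabilizing ray'' argument precise — possibly by a diagonal/König-type extraction of the infinite path from the subtree spanned by $\{\gamma_{o,x_n}\}$ — is where the real care lies; everything else is the routine verification that $h$ has finite energy.
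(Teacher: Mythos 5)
Your construction of $h$ and the finite-energy verification are exactly the paper's (the paper defines the same function $f(x) = L_w(\gamma_x)$ along rooted paths and uses $|f(x)-f(y)| = w(x,y)$ on edges), and reducing the problem to Lemma~\ref{capacity-trick} is also the paper's route. But the step you yourself flag as the ``main obstacle'' is a genuine gap, not a routine verification, and your guesses at how to close it point in the wrong direction. A K\H{o}nig-type extraction needs local finiteness, which is not assumed, and geodesic completeness is not assumed either; neither is the mechanism that actually makes the argument work. The missing idea is the following use of the hypothesis that $\sigma$ is a \emph{path} metric, a hypothesis your sketch never concretely invokes. Let $z_n$ be the greatest common ancestor (relative to the root $o$) of the tail $\{x_n, x_{n+1}, \ldots\}$; these form a monotone sequence of vertices. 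Since $G$ is a tree, for suitable $N>n$ every path from $x_n$ to $x_N$ passes through $z_n$, and because $\sigma$ is a path metric this forces the additivity
$$\sigma(x_n,x_N) = \sigma(x_n,z_n) + \sigma(z_n,x_N) \geq \sigma(x_n,z_n).$$
Since $(x_n)$ is $\sigma$-Cauchy (it converges to $\xi \in A$), this gives $\sigma(x_n,z_n) \to 0$, hence $z_n \to \xi$ as well. This is precisely what rules out your ``branching away'' pathology: if the ancestors stabilized at some vertex $z \in X$, then $\sigma(x_n,z) \to 0$ would force $(x_n)$ to be eventually equal to $z$ (discrete topology), contradicting $\xi \in \partial_\sigma X$.

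With that in hand the rest goes through as you intended: concatenating the (monotone) tree paths from $z_n$ to $z_{n+1}$ produces an infinite monotone path $\gamma$ starting at $o$ which converges to $\xi$ (again using path-metric additivity to control the interpolated vertices), so $\gamma \in \Gamma_{A,\sigma}$ and the nullity hypothesis gives $L_u(\gamma) = \infty$. Since $z_n$ is an ancestor of $x_n$, the rooted path to $x_n$ passes through $z_n$, so $h(x_n) = h(z_n) + d_u(z_n,x_n) \geq h(z_n)$, and $h(z_n)$ increases to $L_u(\gamma) = \infty$. So your outline is the right one, but the ancestor construction together with the displayed additivity identity is the actual content of the proof, and without it the claim that the rooted paths ``stabilize on a common ray converging to $\xi$'' is unsupported.
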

 \begin{proof}
 Let $w$ be an edge weight for $\Gamma_{A,\sigma}$ as in the definition of null sets of paths.  Fix $o \in X$ and for $x \in X$ let $\gamma_x$ be the unique shortest path with respect to the combinatorial distance connecting $o$ and $x$ (uniqueness follows from $(X,b)$ being a tree). We define
 $$f \colon X \to \R, \quad f(x) = L_w(\gamma_x).$$
 Since $(X,b)$ is a tree, for neighbors $x,y \in X$ we have $|f(x) - f(y)| = w(x,y)$ showing $f \in \mathfrak D (G)$.

Let $(x_n)$ be a sequence in $X$ with limit in $x \in A$. We construct a monotone path $\gamma = (y_1,y_2,\ldots)$ (i.e. the combinatorial distance of $y_1$ and $y_{n+1}$ is larger or equal than the combinatrial distance   of  $y_1$ and $y_{n}$) starting in $o$ such that $y_n \to x$ and
$$f(x_k) = f(y_{n_k})  + d_w(x_k,y_{n_k}) \geq f(y_{n_k})$$
for a suitable subsequence $(y_{n_k})$.  The monotonicity of $\gamma$ and that $(X,b)$ is a tree imply
$$\liminf_{k \to \infty} f(x_k) \geq \liminf_{k \to \infty} f(y_{n_k})  = L_w(\gamma) = \infty.  $$

 Construction of $\gamma$: We consider $o$ as a root for the graph and denote by $|x|$ the combinatorial distance of $x$ to $o$. We say that $y$ is an {\em ancestor} of $c$ if all paths from $x$ to $o$ pass through $y$.  Since $(X,b)$ is a tree, every $A \subseteq X$ has a unique {\em greatest common ancestor}, i.e., there exists and element $y \in X$ with:
 \begin{itemize}
  \item $y$ is an ancestor of every element of $A$.
  \item For every $x \in X$ with $|x| > |y|$ there exists an $a \in A$ such that $x$ is not an ancestor of $a$.
  \end{itemize}

We let $z_n$ be the greatest common ancestor of $\{x_n,x_{n+1},\ldots\}$. This sequence is monotone as $z_n$ is an ancestor of $\{x_{n+1},x_{n+2},\ldots\}$ and hence an ancestor of $z_{n+1}$. For every $n \in \N$ there exists $N > n$ such that the greatest common ancestor of $\{x_n,x_N\}$ is $z_n$ (otherwise $z_n$ would not be a greatest common ancestor). Every path from $x_n$ to $x_N$ passes through $z_n$. Since $\sigma$ is a path metric, this implies
$$\sigma(x_n,x_N) = \sigma(x_n,z_n) + \sigma(z_n,x_N)$$
and we obtain
$$\sigma(x_n,z_n) \leq \sup\{\sigma(x_n,x_N) \mid N > n\}.$$
Hence,  $(z_n)$ also converges to $x$ but it need not be a path. We make it a path by inserting monotone paths from $z_n$ to $z_{n+1}$ (these exist since $y_n$ is an ancestor of $y_{n+1}$). Using that $\sigma$ is a path metric yields that any such additional point $z$ lying between $z_n$ and $z_{n+1}$ satisfies
$$\sigma(z_n,z_m) = \sigma(z_n,z) + \sigma(z,z_m) \geq \sigma(z_n,z).$$
Hence, also the so-constructed monotone path $(y_n)$ converges to $x$. We choose $n_k$ such that $y_{n_k} = z_k$. Using that $z_k$ is an ancestor of $x_k$, we obtain
$$f(x_k) = f(z_k)  + L_w(\gamma_{x_k}) - L_w(\gamma_{z_k}) = f(y_{n_k}) + d_w(y_{n_k},x_k).$$
%

%
%
%
\end{proof}

\begin{remark}
In the previous proof    we  used the following observation
utilizing that $(X,b)$ is a tree and $\sigma$ is a path metric:  For
every $x \in \partial_\sigma X$ and every sequence $(x_n)$ in $X$
converging to $x$ there exists a monotone path $(y_n)$ converging to
$x$ such that any $x_n$ has an element from the path $(y_n)$ as an
ancestor.
\end{remark}

%
%
%

\section{Recurrence and intrinsic metrics}
In this section we use similar technics as in
Section~\ref{section:capacity v.s paths} to give a new
characterization of recurrence in terms of intrinsic metrics and to
study the relation of recurrence to the vanishing of the capacity of
the boundary. Moreover, we provide an alternative proof for a
classical characterization of recurrence due to Yamasaki.  For
general background on recurrence we refer the reader to the
textbooks \cite{KLW-book,Woe}.



The word  recurrence stems from the stochastic perspective. In
this perspective the  graph gives rise to a Markov process modeling
a particle jumping between the points of $X$. Recurrence then
describes the phenomenon  that the particle comes back to any point
of $X$ again and again. In the analytic description, which is our
concern here, this is encoded by various forms of  irrelevance of
what is happening far away (i.e. outside of  finite sets). We will
see precise versions as we go along and this is the main topic of
this section.

\begin{definition}[Recurrence]
A  graph $G=(X,b)$ is called \emph{recurrent} if the constant
function $1$ is contained in $\mathfrak{D}_0(G)$.  Graphs that are
not recurrent are called \emph{transient}.
\end{definition}

\begin{remark}
 \begin{enumerate}[(a)]

\item  The definition of recurrence means that there exists a
sequence of functions $(f_n)$ in $C_c (X)$ with $f_n \to 1$
pointwise and $Q(f_n) = Q(f_n -1) \to 0, n\to \infty$. As the $f_n$
have finite support this can be seen  as an instance of how the
behaviour outside of compact sets (in this case the supports of the
$f_n$) becomes irrelevant.

\item Recurrence is equivalent to $\mathfrak{D}_0(G) = \mathfrak{D}(G)$,
i.e., $C_c(X)$ being dense in $\mathfrak{D}(G)$ with respect to the
norm $\aVd_o$, see e.g.\cite[Theorem~3.63]{Soa}.

\item For disconnected graphs transience is a stronger property than not
being recurrent.  Since all the graphs in this paper are assumed to
be connected, we may well use the above definition.  For further
background on recurrence we refer the reader to \cite{Schmi1}.

 \end{enumerate}
\end{remark}

Next we connect recurrence,  vanishing of the capacity and
finiteness of metric balls,  to the existence of certain unbounded
functions of finite energy.

\begin{theorem}[Characterization of recurrence] \label{recurrence}

Let $G=(X,b)$ be an infinite graph. The following conditions are
equivalent:

\begin{enumerate}[(i)]
 \item  $G$ is recurrent.
 \item   There is a function of finite energy $f\in\mathfrak{D}(G)$ that satisfies
 $$\liminf_{x\rightarrow\infty} f(x) = \infty.$$
 \item There is an intrinsic metric $\sigma \in \mathfrak{M}(G)$ that
induces the discrete topology on $X$ such that distance balls with
respect to $\sigma$ are finite.

\item[(iii)'] There exists a finite measure $m$ and an edge weight
$w$ adapted to it such that the distance balls with respect to $d_w$
are finite.

\item For one  (every) finite
measure $m$ on $X$ and  one (every) compactification $Y$ of $X$ the
equality ${\rm cap}_m (\partial_Y X) = 0$ holds.
\item One (every) finite measure has the following feature: For any
$\varepsilon>$ there exists a finite set $F$ in $X$ with ${\rm
cap}_m (X\setminus F) = 0$.
\end{enumerate}
%

\end{theorem}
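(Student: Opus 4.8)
The plan is to treat condition (ii)---the existence of a finite-energy function diverging to infinity---as the hub to which every other condition is tied, and to prove the clusters (i)$\Leftrightarrow$(ii), (ii)$\Leftrightarrow$(iii)$\Leftrightarrow$(iii)$'$, and (ii)$\Leftrightarrow$(iv)$\Leftrightarrow$(v). The tools are the characterization of zero-capacity boundary sets in Lemma~\ref{capacity-trick}, the two-way passage between $\mathfrak{D}(G)$ and $\mathfrak{M}(G)$ in Lemmas~\ref{potential} and~\ref{potential2}, the perturbation Proposition~\ref{p:smallperturbation}, and the path-metric Lemma~\ref{path-metrics}.

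\emph{The equivalence (i)$\Leftrightarrow$(ii).} For (ii)$\Rightarrow$(i), given $f$ with $\liminf_{x\to\infty}f=\infty$ (and, after adding a constant, $f\ge 0$), I would set $h_N:=(1-f/N)_+=1-\tfrac1N(f\wedge N)$. Since $f\ge N$ off a finite set, $h_N\in C_c(X)$; moreover $h_N\to 1$ pointwise and $Q(h_N)=\tfrac1{N^2}Q(f\wedge N)\le\tfrac1{N^2}Q(f)\to0$ by Proposition~\ref{theorem:fundamental contraction property}, which is exactly recurrence. For (i)$\Rightarrow$(ii), I would start from $\varphi_n\in C_c(X)$ with $\varphi_n\to 1$ in $\aVd_o$, pass to $\hat h_n:=(1-\varphi_n)_+\wedge1$, a contraction of $1-\varphi_n$, so that $0\le\hat h_n\le1$, $\hat h_n=1$ off $F_n:=\supp\varphi_n$, $Q(\hat h_n)\le Q(\varphi_n-1)\to0$, and $\hat h_n\to0$ pointwise. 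After extracting a subsequence with $\sum_n\aV{\hat h_n}_o<\infty$, the series $f:=\sum_n\hat h_n$ converges in the Banach space $(\mathfrak{D}(G),\aVd_o)$ (Lemma~\ref{lemma:properties of energy space}), and since $f\ge N$ off the finite set $F_{n_1}\cup\dots\cup F_{n_N}$ we obtain $\liminf_{x\to\infty}f=\infty$.

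\emph{The equivalences (ii)$\Leftrightarrow$(iii)$\Leftrightarrow$(iii)$'$.} For (ii)$\Rightarrow$(iii) I would perturb $f$ via Proposition~\ref{p:smallperturbation} to be injective while keeping $\liminf_{x\to\infty}f=\infty$, and take $\sigma:=\sigma_f$; by Lemma~\ref{potential2} this is intrinsic with respect to the finite measure $m_f$, injectivity makes it a genuine metric, and $\liminf_{x\to\infty}f=\infty$ forces each ball $\{y:|f(x)-f(y)|\le r\}$ to be finite. A short auxiliary observation---a metric all of whose balls are finite induces the discrete topology---then yields (iii). For (iii)$\Rightarrow$(ii) I would use the distance function $f:=\sigma_{\{o\}}$, which lies in $\mathfrak{D}(G)$ by Lemma~\ref{potential}(b) and satisfies $\liminf_{x\to\infty}f=\infty$ since $\{f<N\}\subseteq B_N^\sigma(o)$ is finite. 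For (iii)$\Leftrightarrow$(iii)$'$ I would invoke Lemma~\ref{path-metrics}: from (iii), $w:=\sigma$ is an adapted edge weight and $d_w=d_\sigma\ge\sigma$ still has finite balls; conversely from (iii)$'$, $d_w$ is intrinsic with respect to $m$, and to secure a genuine metric inducing the discrete topology I would pass to $\sigma:=d_w+\tau$ with $\tau$ the ultrametric of Lemma~\ref{disc-top}, which preserves finiteness of balls.

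\emph{The equivalences (ii)$\Leftrightarrow$(iv)$\Leftrightarrow$(v), and the main difficulty.} Applying Lemma~\ref{capacity-trick} in the form extended to compactifications (the remark following Corollary~\ref{cor-infinite-distance}) to the one-point compactification $Y=X\cup\{\mathrm{pt}\}$, where $\liminf_{x\to\{\mathrm{pt}\}}f=\liminf_{x\to\infty}f$, identifies (ii) with ${\rm cap}_m(\partial_Y X)=0$; the same lemma gives independence of the finite measure, and since for any compactification $\liminf_{x\to\partial_Y X}f=\infty$ is equivalent to $\liminf_{x\to\infty}f=\infty$, the choice of compactification in (iv) is immaterial. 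Reading the bound in (v) as ${\rm cap}_m(X\setminus F)<\varepsilon$ (the literal ``$=0$'' cannot hold, as ${\rm cap}_m\ge m$ on subsets of $X$), the equivalence (iv)$\Leftrightarrow$(v) is immediate from the definition of capacity on a compactification together with the fact that the open neighborhoods of $\partial_Y X$ are exactly the complements of finite subsets of $X$. I expect the main obstacle to be (i)$\Rightarrow$(ii): assembling one divergent finite-energy function from the recurrence-approximating sequence while simultaneously controlling energy and pointwise growth, together with the regularity bookkeeping in (iii)---perturbing to injectivity and verifying that finiteness of balls really does force the discrete topology. By contrast, once Lemma~\ref{capacity-trick} is in hand the capacity statements (iv) and (v) follow essentially formally.
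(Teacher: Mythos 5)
Your proof is correct, and its architecture differs genuinely from the paper's. The paper closes the cycle (i) $\Rightarrow$ (iv) $\Rightarrow$ (ii) $\Rightarrow$ (iii) $\Rightarrow$ (i): it bounds ${\rm cap}_m(\partial_Y X)\le Q(1-f_n)+\aV{1-f_n}_m^2$ directly from a recurrence sequence, gets (ii) from (iv) by adapting the proof of Lemma~\ref{capacity-trick}, and returns to (i) from (iii) with the cutoffs $(1-\sigma_F)_+$ along an exhaustion by finite sets. You instead make (ii) the hub: your (iii) $\Rightarrow$ (ii) via $\sigma_{\{o\}}\in\mathfrak D(G)$ (Lemma~\ref{potential}(b)) and (ii) $\Rightarrow$ (i) via $h_N=(1-f/N)_+$ split the paper's single step (iii) $\Rightarrow$ (i) into two more transparent ones, and your direct (i) $\Rightarrow$ (ii) --- summing a norm-summable subsequence of the contractions $(1-\varphi_n)_+\wedge 1$ in the Banach space $(\mathfrak D(G),\aVd_o)$ --- is the same summation device used inside Lemma~\ref{capacity-trick}, deployed without the capacity detour. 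Two further departures are worth noting. For (iii)$'$ $\Rightarrow$ (iii) the paper goes through essential local finiteness and \cite[Lemma~2.2]{KM}, while you add the ultrametric $\tau$ of Lemma~\ref{disc-top}; this is self-contained and correct, but you should record the one-line check that $d_w+\tau\in\mathfrak M(G)$ (e.g. via $(d_w+\tau)^2\le 2d_w^2+2\tau^2$), which your phrase ``preserves finiteness of balls'' does not by itself cover. For (iv) and (v) you argue purely formally from Lemma~\ref{capacity-trick} in its compactification form (the remark following Corollary~\ref{cor-infinite-distance}) together with the fact that the open neighborhoods of $\partial_Y X$ are exactly the complements of finite subsets of $X$, whereas the paper's hands-on proof of (i) $\Rightarrow$ (iv) yields an explicit quantitative capacity bound; your version makes clearer that the capacity statements are formal consequences of the lemma. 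Finally, your reading of (v) as ${\rm cap}_m(X\setminus F)<\varepsilon$ is the intended one: as you observe, the literal ``$=0$'' is impossible since ${\rm cap}_m\ge m$ on subsets of $X$, and without that reading $\varepsilon$ would play no role in the statement.
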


\begin{proof} (i) $\Rightarrow$ (iv):  Let $m$ be an arbitrary finite measure on $X$ and $Y$ a compactification of $X$.  By (i) there exists a sequence $(f_n)$ in $C_c (X)$ with $f_n \to 1$ pointwise and $Q(f_n)\to 0$. Replacing $f_n$ by $(f_n\vee 0)\wedge 1$ we can assume without loss of generality $0\leq f_n \leq 1$  for each $n$. Then, $0\leq 1-f_n \leq 1$ holds and $1 - f_n$ is  $1$ outside the finite support of $f_n$. Hence, 
$${\rm cap}_m (\partial_Y X)\leq Q(1-f_n) + \av{1-f_n}_m^2$$
holds for each $n$. It suffices to show that both terms on the right hand side converge to zero. The first term satisfies $Q(1-f_n) = Q(f_n)\to 0,n\to \infty$. The second term satisfies $\av{1 - f_n}_m^2 \to 0,n\to \infty$ by Lebesgue theorem on dominated convergence (as $0\leq 1 - f_n\leq 1$ holds and $1-f_n$  converges pointwise to $0$ and $m$ is a finite measure).

(iv) $\Rightarrow$ (ii): : This follows by a straightforward adaption of the proof of (i)$\Rightarrow$ (iii) of Lemma~\ref{capacity-trick}.

(ii) $\Rightarrow$ (iii):  Let $f$ be a function satisfying (ii). By Proposition \ref{p:smallperturbation} we can assume without loss of generality that the values of $f$ are pairwise distinct. Set $\sigma(x,y) =\sigma_f(x,y) = |f(x)-f(y)|$ for all $x,y\in X$. This yields a pseudo  metric that is intrinsic with respect to a finite measure, see Lemma \ref{potential}. In fact, it is even a metric as the values of $f$ are pairwise distinct. Its distance balls are given by
$$B_r^\sigma(o) = \{x\in X \mid f(o) -r \leq f(x)  \leq r + f(o)\} .$$
Since $\liminf_{x\rightarrow\infty} f(x) = \infty,$ they are finite. In particular, this metric induces the discrete topology.

(iii) $\Rightarrow$ (i): Let an intrinsic metric $\sigma$ with respect  to a finite measure $m$ be given according to (iii). Hence, $\sigma$  induces the discrete topology and its distance balls are finite. Now, let $F$ be an arbitrary finite set. Then, $\sigma_F:= \sigma(\cdot,F)$ satisfies

\begin{itemize}

\item $\sigma_F =0 $ on $F$.

\item $\sigma_F \geq 1$ outside of $F_1:=\{x\in X :
\sigma(F,x)<1\}$ and $F_1$ is finite (as $F$ is a  finite set and
distance balls with respect to $\sigma$ are finite).
\end{itemize}

Define $g_F := (1-\sigma_F)_+$. Then $g_F$  equals to $1$ on $F$ (by
the first bullet point) and has finite support contained in $F_1$
(by the second bullet point). Moreover, as $Q$ is compatible with
contractions and $Q(1) =0$ holds  we find from Lemma \ref{potential}
the estimate
$$Q(g_F) \leq Q(1-\sigma_F) = Q (\sigma_F) \leq
 2 m(X\setminus F).$$
 So, choosing an increasing  sequence $(F_n)$ of finite sets with
 $\cup_n F_n = X$ we obtain a sequence $f_n:= g_{F_n}$ in $C_c (X)$  converging
 pointwise to $1$ with
 $$Q(f_n) \leq 2 m(X\setminus F_n)\to 0,n\to\infty.$$
 This shows (i).

The equivalence between (iv) and (v) is clear.

(iii)' $\Rightarrow$ (iii): By Lemma~\ref{path-metrics} the metric $d_w$ is intrinsic with respect to the finite measure $m$. To show that it induces the discrete topology we note that finiteness of $d_w$-balls implies that for all $R > 0$ and $x \in X$ the set 
$$\{y \in X \mid y \sim x \text{ with } w(x,y) < R\}$$
is finite (otherwise the $R$-ball around $x$ would contain infinitely many points). This is known as essential local finiteness of $w$ and, according to \cite[Lemma~2.2]{KM}, implies that $d_w$ induces the discrete topology.

(iii) $\Rightarrow$ (iii)': We choose $w :=\sigma$. Then,  $w$ is adapted to a finite measure $m$ and $d_\sigma$ is then an intrinsic metric with respect to $m$ with $\sigma \leq d_\sigma$ by Lemma \ref{path-metrics}. In particular, balls with respect to $d_w$ are contained in the corresponding balls with respect to $\sigma$ and are, hence, finite.
\end{proof}

\begin{remark}

\begin{enumerate}[(a)]
 \item  The equivalence between (i) and (iv) can be seen as a
special instance of the recurrence theory developed by the third
author in his (unpublished) PhD thesis \cite{Schmi}.
\item Clearly, a metric with finite distance balls must induce the
discrete topology.
\item In the proof of (ii) $\Rightarrow$ (iii) we have seen that for
$f$ of finite energy with $\liminf_{x\to\infty} f(x) = \infty$ the
intrinsic (pseudo)metric $\sigma_f$ has finite distance balls. There
is a converse of sorts  to this: Let $\sigma$ be a metric and define
for $x\in X$ the function  $f_x$ by $f_x (y) :=\sigma(y,\{x\})$.
Then, the distance balls around one $x\in X$ are finite if and only
if the distance balls around any $x\in X$ are finite and this holds
if and only if $\liminf_{y\to \infty} f_x (y) =\infty$ holds for one
(all) $x\in X$.

\item The proof of (iii) $\Rightarrow$ (i) only uses
that the balls of radius $1$ are finite. In fact, the number $1$ is
irrelevant. It suffices that there is an $r>0$ such that all balls
of radius $r$ are finite.  However, if $\sigma$ is an intrinsic
metric all of whose distance balls of radius $r$ are finite then for
any sequence $F_n$ of finite sets in $X$ with $F_n\subseteq F_{n+1}$
and $\cup F_n = X$ we can define $f := \sum_{n=1}^\infty
\sigma(F_n,\cdot)$. Then, $f$ will be well-defined with
$\liminf_{x\to \infty} f(x) = \infty$ (by finiteness of $r$-balls).
With a suitable choice of $F_n$ then $f$ will have finite energy and
$\sigma + \sigma_f$ will be an intrinsic metric with respect to a
finite measure that has finite distance balls.

\item The existence of an intrinsic metric with respect to a (finite)
measure $m$ that has finite distance balls has strong consequences.
In particular, as observed in \cite{HKMW},   it implies that
associated graph Laplacians on $\ell^2(X,m)$ (and more general
magnetic Schrödinger operators \cite{GKS,Schmi3}) are essentially
self-adjoint. It is somewhat surprising that recurrence implies
essential self-adjointness for a particular finite measure, as in
general recurrence is strictly weaker than essential
self-adjointness for all finite measures. We refer to discussion
after Theorem~11.6.15 in the survey \cite{Schmi3}.  This survey
contains a version of the previous theorem, which  was first
obtained in the second author's master's thesis \cite{Puch}.
\end{enumerate}
\end{remark}



We are now going to derive some consequence of the preceding
theorem.  As a a first consequence of it  we obtain an alternative
proof for the (by now) classical recurrence criterion of Yamasaki
\cite{Yam77}.

\begin{corollary}[Yamasaki's criterion] \label{coro:yamasaki}
Let $G=(X,b)$ be a locally finite graph.  Then $G$ is recurrent if
and only if the set of all infinite paths is null.
\end{corollary}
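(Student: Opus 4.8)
The plan is to prove Yamasaki's criterion by combining the characterization of recurrence from Theorem~\ref{recurrence} with the connection between capacity and null sets of paths established in Theorem~\ref{theorem:capacity vs paths} and its tree-converse. The key bridge is condition (iii)' of Theorem~\ref{recurrence}: a locally finite graph is recurrent if and only if there exists a finite measure $m$ and an edge weight $w$ adapted to it such that all $d_w$-balls are finite. Since we restrict to locally finite graphs, the Hopf--Rinow type Theorem~\ref{Keller} is available and will be the main technical engine: it equates finiteness of distance balls with geodesic completeness, i.e.\ with \emph{every} infinite path having infinite $w$-length.

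First I would prove the direction ``recurrent $\Rightarrow$ all infinite paths null.'' Assume $G$ is recurrent. By Theorem~\ref{recurrence}~(iii)' there is a finite measure $m$ and an adapted edge weight $w$ such that $d_w$-balls are finite. Being adapted means $\frac{1}{2}\sum_y b(x,y)w(x,y)^2 \leq m(x)$, so summing over $x$ gives $\sum_{x,y} b(x,y)w(x,y)^2 \leq 2m(X) < \infty$; thus $w$ is an admissible weight in the definition of a null set of paths. Because $G$ is locally finite and $d_w$-balls are finite, Theorem~\ref{Keller} applies and $(X,d_w)$ is geodesically complete, which is precisely the statement that $L_w(\gamma) = \infty$ for every infinite path $\gamma$. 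Hence the set of \emph{all} infinite paths is null.

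For the converse, ``all infinite paths null $\Rightarrow$ recurrent,'' suppose the set of all infinite paths is null, witnessed by an edge weight $w$ with $\sum_{x,y} b(x,y)w(x,y)^2 < \infty$ and $L_w(\gamma) = \infty$ for every infinite $\gamma$. Rescaling $w$ by a suitable constant we may arrange $\sum_{x,y} b(x,y)w(x,y)^2 \leq 2$, so that setting $m(x) := \frac{1}{2}\sum_y b(x,y)w(x,y)^2$ gives a finite measure to which $w$ is (by construction) adapted. Since $L_w(\gamma) = \infty$ for every infinite path, $(X,d_w)$ is geodesically complete, and by Theorem~\ref{Keller} (again using local finiteness) every $d_w$-ball is finite. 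This is exactly condition (iii)' of Theorem~\ref{recurrence}, so $G$ is recurrent.

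The main obstacle to watch is the careful use of the Hopf--Rinow dictionary in both directions: one must ensure that the edge weight $w$ produced (or assumed) is genuinely an edge weight, i.e.\ strictly positive on edges, so that $d_w$ is a metric inducing the discrete topology as in Theorem~\ref{Keller}; if $w$ vanishes on some edge one first replaces it by a strictly positive perturbation with still-finite total $\widetilde{Q}$-mass, as in Proposition~\ref{p:smallperturbation}. The other point requiring care is the equivalence ``$d_w$-balls finite $\iff$ every infinite path has infinite $w$-length,'' which is exactly the content of the equivalence of (ii) and (iii) in Theorem~\ref{Keller} and is where local finiteness is genuinely used; on non-locally-finite graphs this equivalence can fail, which is why the corollary is stated only for locally finite graphs.
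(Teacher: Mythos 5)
Your proof is correct and follows essentially the same route as the paper: both directions hinge on condition (iii)' of Theorem~\ref{recurrence} together with the Hopf--Rinow dictionary of Theorem~\ref{Keller}, and your converse direction is identical to the paper's. The only (cosmetic) difference is in the forward direction, where the paper replaces the appeal to Theorem~\ref{Keller} by a direct two-case argument (a path either leaves every finite set, hence every finite-radius $d_w$-ball, or stays in a finite set and traverses some positively weighted edge infinitely often), which shows that this implication does not actually require local finiteness.
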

\begin{proof}
Let $G$ be recurrent. According to Theorem \ref{recurrence}
there exists a finite measure $m$ and a weight $w$ adapted to $m$
such that the intrinsic path metric $d_w$ has finite distance balls.
As $w$ is adapted to $m$ we have 
$$\sum_{x,y}b(x,y) w(x,y)^2 \leq 2m(X) <\infty.$$
It suffices to show   that the length $L_w(\gamma)$ of any infinite path is $\infty$. We consider two cases:

Case 1:  The  path $\gamma$  leaves any finite set. Then, the path
leaves  in particular any ball of finite radius (w.r.t. $d_w$).
Hence, the path  must have infinite length (as the metric is a path
metric).

Case 2: The  path $\gamma$ stays within a fixed finite set. Then, it
must have infinite length anyway.

Suppose that the set of all infinite paths is null and let $w$ be a
corresponding edge weight.  The summability condition on $w$ implies
that
 $$m_w(x) := \frac{1}{2} \sum\limits_{y\in X} b(x,y)w(x,y)^2$$
 is a finite measure.  We consider the path metric $d_w$ induced by $w$. Lemma~\ref{path-metrics}~(a) ensures that it is intrinsic with respect to $m_w$.  Thus, $d_w$ is an intrinsic metric with respect to a finite measure. Since all infinite paths have infinite length, Theorem~\ref{Keller} implies that $d_w$ has finite distance balls. This yields recurrence by Theorem~\ref{recurrence}.
\end{proof}

\begin{remark}
 For proving nullity of the set of all paths on recurrent graphs we did not use local finiteness.
\end{remark}

Another consequence of our characterization of recurrence is
vanishing of the capacity of all boundaries of metric completions in
the recurrent case.

\begin{corollary}\label{coro:polarity boundary}
Let $G = (X,b)$ be a recurrent infinite graph. For any finite
measure $m$ on $X$ and any metric $\sigma$ on $X$ that induces the
discrete topology we have
 $${\rm cap}_m(\partial_\sigma X)~=~0.$$
\end{corollary}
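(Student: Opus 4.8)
The plan is to read off the conclusion directly by chaining together three results already established in the text: the characterization of recurrence in Theorem~\ref{recurrence}, the comparison of limites inferiores in Proposition~\ref{p:xtoinftystronger}, and the capacity criterion of Lemma~\ref{capacity-trick}. The key conceptual point is that recurrence furnishes a single diverging function of finite energy that makes no reference to any metric, and this one function then handles every admissible $\sigma$ at once.

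First I would use recurrence to produce the function. Since $G$ is recurrent, the implication (i) $\Rightarrow$ (ii) of Theorem~\ref{recurrence} yields a function $f \in \mathfrak{D}(G)$ of finite energy with $\liminf_{x\to\infty} f(x) = \infty$. This $f$ depends only on the graph, not on the metric $\sigma$ under consideration. Next I would transport the divergence from infinity to the metric boundary: for any metric $\sigma$ inducing the discrete topology, Proposition~\ref{p:xtoinftystronger} gives
$$\liminf_{x\to \partial_\sigma X} f(x) \geq \liminf_{x\to \infty} f(x) = \infty,$$
so that $\liminf_{x\to \partial_\sigma X} f(x) = \infty$. The content here is that divergence in the coarsest (one-point) sense forces divergence along every sequence approaching $\partial_\sigma X$, irrespective of the fine geometry encoded by $\sigma$.

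Finally I would apply Lemma~\ref{capacity-trick} with $A = \partial_\sigma X \subseteq \partial_\sigma X$. The equivalence (iii) $\Leftrightarrow$ (ii) there states that the existence of such an $f \in \mathfrak{D}(G)$ with $\liminf_{x\to \partial_\sigma X} f(x) = \infty$ is equivalent to ${\rm cap}_m(\partial_\sigma X) = 0$ for \emph{all} finite measures $m$. This delivers the asserted vanishing of the capacity for every finite $m$ and every metric $\sigma$ inducing the discrete topology, completing the proof.

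There is no real obstacle once this machinery is assembled, since the substantive work has already been isolated in Proposition~\ref{p:xtoinftystronger}, whose proof rests on the fact that finite subsets of $X$ are compact, hence closed, in $\overline{X}^\sigma$, so their complements are open neighborhoods of $\partial_\sigma X$. The only points requiring mild attention are that $G$ is infinite (so that Lemma~\ref{capacity-trick} applies) and that the diverging function obtained from Theorem~\ref{recurrence} is genuinely of finite energy and independent of $\sigma$; both are immediate from the hypotheses and from condition (ii) of that theorem.
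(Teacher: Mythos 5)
Your argument is correct, but it is not the paper's proof. The paper derives the corollary in one line from item (v) of Theorem~\ref{recurrence}: for a recurrent graph and any finite measure $m$ one can find finite sets $F \subseteq X$ with ${\rm cap}_m(X\setminus F)$ vanishing; since a finite $F$ is compact, hence closed in $\overline{X}^\sigma$, the set $\overline{X}^\sigma\setminus F$ is an open neighborhood of $\partial_\sigma X$ whose trace on $X$ is $X\setminus F$, so the very definition of the capacity of boundary sets (an infimum over open neighborhoods) gives ${\rm cap}_m(\partial_\sigma X)\leq {\rm cap}_m(X\setminus F) = 0$. You instead go through item (ii) of Theorem~\ref{recurrence}, then Proposition~\ref{p:xtoinftystronger}, then the implication (iii) $\Rightarrow$ (ii) of Lemma~\ref{capacity-trick}. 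This is heavier machinery --- that implication of Lemma~\ref{capacity-trick} is the slicing argument, whereas the paper's proof needs nothing beyond monotonicity of the capacity --- but it isolates a cleaner conceptual point: a single diverging function of finite energy, produced once from recurrence and independent of $\sigma$, witnesses polarity of every metric boundary simultaneously, which is the same mechanism that drives Theorem~\ref{theorem:capacity vs paths}. Both routes deliver the conclusion for all finite measures at once, and the hypotheses you flag ($G$ infinite for Lemma~\ref{capacity-trick}, $\sigma$ inducing the discrete topology for Proposition~\ref{p:xtoinftystronger}) are exactly what those results require, so there is no gap.
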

\begin{proof}
This follows immediately from (v) of the previous theorem as
$\partial_\sigma X$ is contained in $X\setminus F$ for any finite
$F$.
\end{proof}

In some cases the vanishing of the capacity of the boundary is
equivalent to recurrence. For this one needs that $\sigma$ is
intrinsic with respect to the finite measure $m$ and some more
geometric data. In the following theorem we discuss two situations
where this is the case.
%

\begin{theorem}[Capacity criterion] \label{cap-recurrence} 
Let $G=(X,b)$ be a graph. Let $\sigma$ be a metric that induces the
discrete topology and is intrinsic with respect to a finite measure
$m$. Then $G$ is recurrent if  ${\rm cap}_m(\partial_\sigma X) = 0$
and one of the following conditions is satisfied:
 \begin{enumerate}[(a)]
  \item $G$ is locally finite.
  \item $(X,\sigma)$ is totally bounded.
 \end{enumerate}
\end{theorem}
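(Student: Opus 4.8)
The plan is to deduce recurrence by verifying one of the equivalent criteria of Theorem~\ref{recurrence}, using a different criterion in each case. The common starting point is Lemma~\ref{capacity-trick}: applied with $A = \partial_\sigma X$, the hypothesis ${\rm cap}_m(\partial_\sigma X) = 0$ furnishes a function $f \in \mathfrak{D}(G)$ with $\liminf_{x \to \partial_\sigma X} f(x) = \infty$. For case (b) this function is not even needed, and I would argue directly via criterion (iv). If $(X,\sigma)$ is totally bounded, then its completion $\overline{X}^\sigma$ is complete and totally bounded, hence compact. Since $\sigma$ induces the discrete topology, $X$ embeds densely in this compact space, so $Y := \overline{X}^\sigma$ is a compactification of $X$ with $\partial_Y X = \partial_\sigma X$. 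The hypothesis then reads ${\rm cap}_m(\partial_Y X) = 0$, and because criterion (iv) of Theorem~\ref{recurrence} requires only a single compactification with polar boundary, $G$ is recurrent.

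For case (a), where $G$ is locally finite, I would manufacture a single path metric with finite balls out of $\sigma$ and $f$ and then appeal to the Hopf--Rinow theorem (Theorem~\ref{Keller}). Define the symmetric weight $w(x,y) := \sigma(x,y) + \sigma_f(x,y)$, where $\sigma_f(x,y) = |f(x)-f(y)|$ is the pseudometric from Lemma~\ref{potential2}. Since $\sigma$ is intrinsic with respect to $m$ and $m_f(X) = Q(f) < \infty$, the elementary bound $w^2 \le 2\sigma^2 + 2\sigma_f^2$ shows that $w$ is adapted to the finite measure $2m + 2m_f$; moreover $w$ is an edge weight because $\sigma(x,y) > 0$ on edges. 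By Lemma~\ref{path-metrics} the induced path metric $d_w$ is then intrinsic with respect to a finite measure, and by Theorem~\ref{Keller} it induces the discrete topology and has finite distance balls exactly when $(X,d_w)$ is geodesically complete. Hence recurrence will follow from criterion (iii)$'$ of Theorem~\ref{recurrence} once I show that every infinite path has infinite $w$-length.

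This last verification is the main obstacle, and I would settle it by a dichotomy on the quantity $S(\gamma) := \sum_i \sigma(x_i,x_{i+1})$ attached to an infinite path $\gamma = (x_1,x_2,\dots)$. If $S(\gamma) = \infty$, then $L_w(\gamma) \ge S(\gamma) = \infty$ and we are done. If $S(\gamma) < \infty$, the partial-sum estimate $\sigma(x_k,x_n) \le \sum_{i \ge k}\sigma(x_i,x_{i+1})$ shows that $(x_n)$ is $\sigma$-Cauchy, hence converges to some $\xi \in \overline{X}^\sigma$. A limit $\xi \in X$ is impossible, since discreteness of $\sigma$ would force $(x_n)$ to be eventually constant, contradicting $x_i \sim x_{i+1}$ (and thus $x_i \ne x_{i+1}$). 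Therefore $\xi \in \partial_\sigma X$, so $\sigma(x_n,\partial_\sigma X) \to 0$, and the sequential characterization of $\liminf_{x \to \partial_\sigma X} f = \infty$ yields $f(x_n) \to \infty$; consequently $L_w(\gamma) \ge \sum_i |f(x_i)-f(x_{i+1})| \ge \sup_n |f(x_n) - f(x_1)| = \infty$. In either alternative $L_w(\gamma) = \infty$, so $(X,d_w)$ is geodesically complete and the proof concludes as outlined above. I expect the only delicate points to be the clean splitting into these two alternatives and the realization that local finiteness is used precisely through the Hopf--Rinow equivalence ``geodesically complete $\Leftrightarrow$ finite balls'', which is exactly the implication that can fail for general graphs and thereby explains why a structural hypothesis such as (a) or (b) is indispensable.
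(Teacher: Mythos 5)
Your proof is correct and takes essentially the same route as the paper: case (b) is handled identically via compactness of $\overline{X}^\sigma$ and criterion (iv) of Theorem~\ref{recurrence}, and in case (a) you use the same function $f$ from Lemma~\ref{capacity-trick}, build the same weight $\sigma + \sigma_f$, and conclude through the Hopf--Rinow theorem (Theorem~\ref{Keller}) and Theorem~\ref{recurrence}. The only difference is cosmetic: you verify geodesic completeness of $(X,d_w)$ (every infinite path has infinite length, split according to whether $\sum_i \sigma(x_i,x_{i+1})$ is finite), whereas the paper verifies metric completeness (Cauchy sequences converge, split according to where the limit lies); these are equivalent conditions in Theorem~\ref{Keller}, and both verifications exclude the boundary alternative by the same divergence property of $f$.
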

\begin{proof} (a): According to Theorem~\ref{recurrence} it suffices to construct an intrinsic metric $e$  with respect to a finite measure that has finite distance balls and induces the discrete topology on $X$. The metric $e$ that we construct is a path metric. Since $G$ is locally finite,  it automatically induces the discrete topology. By the discrete Hopf-Rinow theorem, Theorem~\ref{Keller}, the finiteness of distance balls is equivalent to the completeness of $(X,e)$.
 According to
Lemma~\ref{capacity-trick} the assumption ${\rm
cap}_m(\partial_\sigma X) = 0$ yields a function
$f\in\mathfrak{D}(G)$ with $\liminf_{x\rightarrow
\partial_{\sigma} X} f(x) = \infty.$ We let $e := d_{\sigma +
\sigma_f}$ be the path metric that is induced by the weight $\sigma
+ \sigma_f$ with $\sigma_f(x,y) = |f(x) - f(y)|$.
Lemma~\ref{potential2} shows that the pseudometric $\sigma_f$ is
intrinsic with respect to a finite measure and so $\sigma +
\sigma_f$ is intrinsic with respect to a finite measure. We infer
from Lemma~\ref{path-metrics} that also the induced path metric $e$
is intrinsic with respect to a finite measure.

It remains to show the completeness of $(X,e)$. Let $(x_n)$ be Cauchy with respect to $e$. Lemma~\ref{path-metrics} yields $e \geq \sigma + \sigma_f \geq \sigma$, so that $(x_n)$ must also be a Cauchy sequence with respect to $\sigma$. Due to completeness it has a limit $x \in  \overline{X}^\sigma$. We show that $x \in X$ and that $(x_n)$ also converges to $x$ with respect to $e$ by considering two cases:

{Case 1:} $x \in \partial_\sigma X$:   $\liminf\limits_{y\rightarrow \partial_{\sigma} X} f(y) = \infty$  yields $\liminf\limits_{n \to \infty}f(x_n) = \infty$, so that for each $m \in \mathbb N$
 $$e(x_m,x_n) \geq \sigma_f(x_n,x_m) =  |f(x_n) - f(x_m)|$$
 is unbounded in $n$. In particular, this contradicts the assumption that $(x_n)$ is Cauchy with respect to  $e$.

 { Case 2:} $x \in X$: Since $\sigma$ induces the discrete topology on $X$, convergence with respect to $\sigma$ to some point in $X$ yields that $(x_n)$ must eventually be constant. Hence, it also converges with respect to $e$.

 (b): By assumption $\overline{X}^\sigma$ is compact. Hence,
 vanishing capacity of $\partial_\sigma X$  implies recurrence by
 Theorem \ref{recurrence}.
\end{proof}

\begin{remark} \begin{enumerate}[(a)]
                \item Part~(a) of this  Theorem is a generalization of \cite[Theorem~3]{HKMW}, which only treats certain path metrics. Note that for finite underlying measures the equality $D(Q) = D(Q^{\rm max})$ discussed in this reference is equivalent to recurrence, see e.g. \cite[Theorem~6.5]{Schmi1}.
                \item The condition of $(X,\sigma)$ being totally bounded means that it can be isometrically embedded into a compact metric space. Below we consider examples of bounded discrete $X \subseteq \R^2$ equipped with the Euclidean metric.
             \item For general graphs it remains an open question whether or not the previous theorem is true.

       \item
       For locally finite $G = (X,b)$ we established the equivalence of the
following assertions.
 \begin{enumerate}[(i)]
  \item $G$ is recurrent.
  \item For one/all intrinsic metrics $\sigma$ with respect to a finite measure $m$ that induce the discrete topology we have
  $${\rm cap}_m(\partial_\sigma X) = 0.$$
  \item The set of all infinite paths is null.
 \end{enumerate}
 The implication (iii) $\Rightarrow$ (ii) can be seen as a sort of converse to Theorem~\ref{theorem:capacity vs paths} when considering the set $A = \partial_\sigma X$.
               \end{enumerate}

\end{remark}

 \section{Resolvable graphs and harmonic functions}
In this section we turn to the study of transient graphs. By the
results of the last section, transience is characterized by
positivity of the capacity of (suitable) boundaries. Here, we turn
to a different aspect based on   a  characterization of recurrence
and transience in terms of superharmonic functions (read on for the
precise definition). Specifically, a graph is transient if and only
if it admits non-constant superharmonic functions of finite energy
(see e.g.  \cite{KLW-book}). In general, these superharmonic
functions will not be harmonic. This stimulates interest in those
(transient) graphs which admit non-constant harmonic functions of
finite energy. The aim of this section is to derive a capacity based
sufficient condition for existence of such functions. To this end,
we introduce strong resolvability for graphs, which is a  somewhat
stronger property than resolvability that was introduced in
\cite{BS}. We prove  that (most) transient strongly resolvable
graphs  admit harmonic functions.

\begin{definition}[Resolvability]
 A   graph $G=(X,b)$ is called \emph{resolvable} if there is an edge weight $w$ with
 $$\sum_{x,y \in X} b(x,y) w(x,y)^2 < \infty$$
  such that for every point  $x\in\partial_{d_w} X$ the set of paths converging to $x$ with respect to $d_w$ is null.   In this case, $w$ is called a \emph{resolving weight} for $G$.
\end{definition}

The previous definition relied on the concept of path. We now
aim at a path-free definition which captures essentially the same
concept. This yields the following definition.

\begin{definition}[Strong resolvability] \label{def-resolv}
A graph $G=(X,b)$ is called \emph{strongly resolvable}, if there
exists an intrinsic metric $\sigma$ with respect to a finite measure
$m$ that induces the discrete topology such that  ${\rm
cap}_m(\{x\})=0$ for all $x\in \partial_\sigma X$. In this case,
$\sigma$ is called a \emph{resolving metric} for $G$.
\end{definition}

%

\begin{proposition}
 A   strongly resolvable graph is resolvable.
\end{proposition}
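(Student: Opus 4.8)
The plan is to take the resolving metric $\sigma$ witnessing strong resolvability and to show that, regarded as an edge weight, it is already a resolving weight. First I would record that $\sigma$ is a \emph{metric} inducing the discrete topology, so $\sigma(x,y)>0$ whenever $x\neq y$; in particular $\sigma$ is an edge weight. Moreover $\sigma$ is intrinsic with respect to a finite measure, i.e.\ $\sigma\in\mathfrak{M}(G)$, which is precisely the summability condition $\sum_{x,y}b(x,y)\sigma(x,y)^2<\infty$ demanded of a resolving weight. Hence the natural candidate is $w:=\sigma$, whose induced path metric is $d_w=d_\sigma$.

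The core of the argument is to transfer the hypothesis ${\rm cap}_m(\{x\})=0$ on $\partial_\sigma X$ into nullity of path sets at the boundary $\partial_{d_\sigma}X$. Since $\sigma$ is a pseudometric, Lemma~\ref{path-metrics}(a) gives $d_\sigma\geq\sigma$, so every $d_\sigma$-Cauchy sequence is $\sigma$-Cauchy and $d_\sigma$-equivalent sequences are $\sigma$-equivalent. This yields a canonical $1$-Lipschitz projection $\pi\colon \overline{X}^{d_\sigma}\to\overline{X}^{\sigma}$ that fixes the vertices. The key technical point is that $\pi$ maps boundary to boundary, i.e.\ $\pi(\partial_{d_\sigma}X)\subseteq\partial_\sigma X$: if some $x\in\partial_{d_\sigma}X$ had $\pi(x)=v\in X$, then a representing sequence would $\sigma$-converge to the vertex $v$; as $\sigma$ induces the discrete topology, $v$ is isolated, so the sequence would be eventually constant equal to $v$, forcing $x$ to be the vertex $v$ and contradicting $x\in\partial_{d_\sigma}X$.

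With the projection available, I would fix $x\in\partial_{d_\sigma}X$ and set $\bar x:=\pi(x)\in\partial_\sigma X$. Any infinite path $\gamma=(y_1,y_2,\ldots)$ converging to $x$ with respect to $d_\sigma$ also converges to $\bar x$ with respect to $\sigma$ (apply the continuous map $\pi$ and use $\pi(y_n)=y_n$), so $\bar x$ is a $\sigma$-accumulation point of $\gamma$ and thus $\gamma\in\Gamma_{\{\bar x\},\sigma}$. Hence the set of paths converging to $x$ with respect to $d_\sigma$ is contained in $\Gamma_{\{\bar x\},\sigma}$. By strong resolvability ${\rm cap}_m(\{\bar x\})=0$, so Theorem~\ref{theorem:capacity vs paths} (with $A=\{\bar x\}$) shows that $\Gamma_{\{\bar x\},\sigma}$ is null; and since a subset of a null set of paths is null (the same witnessing weight works), the set of paths converging to $x$ is null. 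As $x\in\partial_{d_\sigma}X$ was arbitrary, $w=\sigma$ resolves $G$, proving resolvability.

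The step I expect to be the main (if modest) obstacle is verifying $\pi(\partial_{d_\sigma}X)\subseteq\partial_\sigma X$, i.e.\ ensuring that passing from $\sigma$ to the larger path metric $d_\sigma$ does not create boundary points projecting to interior vertices; this is exactly where discreteness of $\sigma$ is used. Everything else is a direct combination of Lemma~\ref{path-metrics} and Theorem~\ref{theorem:capacity vs paths}, with the observation that the null-witnessing weight produced by Theorem~\ref{theorem:capacity vs paths} may legitimately depend on the boundary point, as permitted by the definition of resolvability.
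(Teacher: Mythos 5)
Your proof is correct and follows essentially the same route as the paper: take $w=\sigma$, use $d_\sigma\geq\sigma$ from Lemma~\ref{path-metrics} to obtain the continuous projection $\overline{X}^{d_\sigma}\to\overline{X}^{\sigma}$, show via discreteness that it maps boundary to boundary, and then apply Theorem~\ref{theorem:capacity vs paths} to the singleton $\{\bar x\}$. Your explicit remarks that $\sigma$ is an edge weight and that subsets of null path sets are null are points the paper leaves implicit, but the argument is the same.
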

\begin{proof}
Let $\sigma$ be a resolving metric for $G$ that is intrinsic with
respect to the finite measure $m$. By Lemma \ref{path-metrics}, the
path metric $d_\sigma$ induced by $\sigma$ satisfies $ \sigma \leq
d_\sigma$. Hence, there is a continuous map $\iota \colon
\overline{X}^{d_\sigma} \to \overline X^\sigma$ that extends the
identity on $X$.

 Let $x \in \partial_{d_\sigma}X$. We first show that $\iota (x) \in \partial_\sigma X$. Suppose that this is not the case, i.e. $\iota(x) \in X$. We choose a sequence $(x_n)$ in $X$ with $x_n \to x$ with respect to $d_\sigma$. Since $x_n \to \iota (x)$ with respect to $\sigma$ and   $\sigma$ induces the discrete topology, $(x_n)$ must be eventually constant. Hence, $x = \iota (x) \in X$, a contradiction.

 Any path converging to $x$ with respect to $d_\sigma$ converges to $\iota(x)$ with respect to $\sigma$. Hence, the set of all such paths is contained in $\Gamma_{\{\iota(x)\}, \sigma}$, the set of paths having $\iota(x)$ as an accumulation point with respect to $\sigma$. It therefore suffices to show that the latter set is null. Since $\iota(x) \in \partial_\sigma X$, we have ${\rm cap}_m(\{\iota(x)\}) = 0$ by assumption. Theorem~\ref{theorem:capacity vs paths} implies that $\Gamma_{\{\iota(x)\}, \sigma}$ is null.
\end{proof}

\begin{remark}
\begin{enumerate}[(a)]
\item Strong resolvability transfers the geometric notion of resolvability
introduced in \cite{BS} to a notion of potential theory. This has
two advantages. With strong resolvability one can also treat
non-locally finite graphs, as potential theory does not distinguish
between locally finite and non-locally finite graphs.  This is
an advantage of potential theory. Indeed,   for notions invoking
infinite paths in general the non-locally finite case poses
problems,  as e.g. the discrete Hopf-Rinow theorem
\ref{path-metrics} does not hold on non-locally finite graphs, see
the discussion in \cite[Appendix~A]{HKMW}. Moreover, strong
resolvability is also available on more general spaces that admit a
potential theory, e.g. Riemannian manifolds, fractals  or metric
graphs.

\item As discussed after Theorem~\ref{theorem:capacity vs paths} we
believe that ${\rm cap}_m(\{x\})=0$ for all $x\in \partial_\sigma X$
is strictly stronger than $\Gamma_{\{x\},\sigma}$ being null for all
$x \in \partial_\sigma X$. Hence,  resolvability seems strictly
stronger that resolvability (even though we do not have concrete
examples).  However, below we shall see that planar graphs,  the
main examples for resolvable graphs in \cite{BS}, are also strongly
resolvable.
\end{enumerate}
\end{remark}

\begin{definition}[(Super)Harmonic functions]
Let $G=(X,b)$ be a graph.  A function $f\colon X\to \mathbb{C}$ is called {\em superharmonic} if for all $x \in X$ it satisfies 
 $$f(x) \geq \frac{1}{{\rm deg}(x)} \sum_{y \in X}b(x,y)f(y),$$
where we assume absolute convergence of the sum on the right side of the equation. The function $f$ is called harmonic if both $f$ and $-f$ are superharmonic. We write $\mathfrak H (G)$ for the {\em space of harmonic functions}. 
\end{definition}

An important property of these functions is that on transient graphs
functions of finite energy are uniquely represented as sums of
harmonic functions of finite energy and functions in
$\mathfrak{D}_0(G)$, see Theorem~6.3 in \cite{Soa} for reference.

\begin{theorem}[Royden decomposition] \label{Royden}
 Let $G=(X,b)$ be a transient graph. For all $f\in\mathfrak{D}(G)$ there exists a unique $f_0\in\mathfrak{D}_0(G)$ and a unique harmonic $f_h\in \mathfrak{D}(G)$ such that
 $$ f = f_0 + f_h $$
 and
 $$ Q(f) = Q(f_0) + Q(f_h). $$
 The function $f_h$ is the unique function in $\mathfrak D(G)$ that satisfies
 $$Q(f_h) = \inf\{Q(f - g) \mid g \in \mathfrak D_0(G)\} = \inf\{Q(f - g) \mid g \in C_c(X)\}.$$
 Moreover, if $f$ is bounded, then $f_0$ and $f_h$ are bounded as well.
\end{theorem}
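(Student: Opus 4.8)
The plan is to realize the decomposition as a $Q$-orthogonal projection onto $\mathfrak{D}_0(G)$ and then to read off harmonicity, uniqueness and boundedness from the defining orthogonality relation. First I would record the decisive consequence of transience: the form $Q$ is a genuine inner product on $\mathfrak{D}_0(G)$. Indeed, if $g\in\mathfrak{D}_0(G)$ satisfies $Q(g)=0$ then $g$ is constant by connectedness, and a nonzero constant would force $1\in\mathfrak{D}_0(G)$, contradicting transience; hence $g=0$. Together with the standard transience estimate $|g(o)|^2\leq C\,Q(g)$ on $\mathfrak{D}_0(G)$ (see \cite{Soa}), this makes $Q^{1/2}$ equivalent to $\aVd_o$ on $\mathfrak{D}_0(G)$. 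Since $\aVd_o$ is itself a Hilbert norm, induced by the inner product $(f,g)\mapsto Q(f,g)+f(o)g(o)$, and $\mathfrak{D}_0(G)$ is $\aVd_o$-closed by definition inside the Banach space of Lemma~\ref{lemma:properties of energy space}, the pair $(\mathfrak{D}_0(G),Q)$ is a Hilbert space. The functional $g\mapsto Q(f,g)$ is $Q$-bounded by Cauchy--Schwarz, so Riesz representation yields a unique $f_0\in\mathfrak{D}_0(G)$ with $Q(f_0,g)=Q(f,g)$ for all $g\in\mathfrak{D}_0(G)$. Setting $f_h:=f-f_0$ gives the orthogonality $Q(f_h,g)=0$ for every $g\in\mathfrak{D}_0(G)$, which drives everything that follows.

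Next I would extract the algebraic conclusions. Testing the orthogonality against $1_x\in C_c(X)$ yields $\sum_y b(x,y)(f_h(x)-f_h(y))=0$, that is $f_h(x)=\frac{1}{\deg(x)}\sum_y b(x,y)f_h(y)$; absolute convergence follows from Cauchy--Schwarz via $\sum_y b(x,y)|f_h(y)-f_h(x)|\leq \deg(x)^{1/2}(2Q(f_h))^{1/2}$, so $f_h\in\mathfrak{H}(G)$. The energy identity $Q(f)=Q(f_0)+Q(f_h)$ is the Pythagorean theorem, since $Q(f_0,f_h)=0$. For the variational statement the projection theorem gives $Q(f_h)=\min_{g\in\mathfrak{D}_0(G)}Q(f-g)$, and because $g\mapsto Q(f-g)$ is $\aVd_o$-continuous while $C_c(X)$ is $\aVd_o$-dense in $\mathfrak{D}_0(G)$, the infimum over $C_c(X)$ agrees with the one over $\mathfrak{D}_0(G)$. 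Uniqueness I would obtain by noting that any second splitting $f=f_0'+f_h'$ gives $h:=f_0-f_0'=f_h'-f_h\in\mathfrak{D}_0(G)$ which is harmonic; harmonicity yields $Q(h,\varphi)=0$ for all $\varphi\in C_c(X)$, and approximating $h$ by such $\varphi$ in $\aVd_o$ forces $Q(h)=0$, whence $h=0$ by positive definiteness.

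The main obstacle is boundedness, where I must control $f_h$ pointwise by a contraction argument without first proving that contractions preserve $\mathfrak{D}_0(G)$. Put $m=\inf f$, $M=\sup f$ and let $C=C_{[m,M]}$. Choose $g_n\in C_c(X)$ with $g_n\to f_0$ in $\aVd_o$, so $h_n:=f-g_n\to f_h$ lies in the closed affine coset $K:=f+\mathfrak{D}_0(G)$ on which $f_h$ minimizes $Q$. Since $h_n=f\in[m,M]$ off the finite set $\supp g_n$, the difference $Ch_n-h_n$ has finite support, hence $Ch_n\in K$; moreover $Q(Ch_n)\leq Q(h_n)\to Q(f_h)$ by Proposition~\ref{theorem:fundamental contraction property} and $|Ch_n|\leq\max\{|m|,|M|\}$, so $(Ch_n)$ is $\aVd_o$-bounded in the Hilbert space $(\mathfrak{D}(G),\aVd_o)$. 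Extracting a weakly convergent subsequence, its limit equals $Cf_h$ pointwise (point evaluations are weakly continuous by Lemma~\ref{lemma:properties of energy space}) and lies in $K$, since $K$ is norm-closed and convex, hence weakly closed. Semicontinuity of $Q$ then gives $Q(Cf_h)\leq\liminf_n Q(Ch_n)\leq Q(f_h)$, while minimality in $K$ forces $Q(Cf_h)\geq Q(f_h)$; equality together with uniqueness of the projection yields $Cf_h=f_h$, i.e. $m\leq f_h\leq M$. Thus $f_h$ is bounded and $f_0=f-f_h$ is bounded as well. The delicate point is keeping the truncated competitors inside $K$, which is exactly why I route the argument through the finitely supported approximants $g_n$ rather than truncating $f_h$ directly.
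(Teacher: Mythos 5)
Your proof is correct. For comparison: the paper does not prove this theorem at all --- it quotes it from Soardi's book (Theorem~6.3 in \cite{Soa}) --- so the relevant benchmark is the classical argument, and yours is essentially that argument, carried out carefully. The decisive structural point is exactly where you place it: transience makes $(\mathfrak{D}_0(G),Q)$ a Hilbert space, via the point-evaluation bound $|g(o)|^2\le C\,Q(g)$. That bound is your one external input; it is a standard characterization of transience, proved well before (and independently of) the Royden decomposition in \cite{Soa}, so there is no circularity. (If you wanted full self-containment, it follows by the same weak-compactness technique you use later: were it false, functions $\varphi_n\in C_c(X)$ with $\varphi_n(o)=1$ and $Q(\varphi_n)\to 0$ would have a weak limit point $\psi\in\mathfrak{D}_0(G)$ with $Q(\psi)=0$ and $\psi(o)=1$, forcing $1\in\mathfrak{D}_0(G)$.) The Riesz-representation construction of $f_0$, the harmonicity of $f_h$ obtained by testing orthogonality against $1_x$ (with the Cauchy--Schwarz estimate supplying the absolute convergence the paper's definition demands), Pythagoras, the identification of the two infima by density, and the uniqueness argument are all sound.

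The only genuinely delicate claim in the theorem is the boundedness of $f_0$ and $f_h$, and your handling of it is the right one: one cannot truncate $f_h$ directly, since $C_{[m,M]}f_h-f_h$ is not known a priori to lie in $\mathfrak{D}_0(G)$, so you truncate the approximants $h_n=f-g_n$ instead, where the truncation only changes values on the finite set $\supp g_n$ and therefore keeps $C_{[m,M]}h_n$ inside the coset $K=f+\mathfrak{D}_0(G)$. The chain you then run --- uniform boundedness of $(C_{[m,M]}h_n)$ in the Hilbert space $(\mathfrak{D}(G),\aVd_o)$, weak sequential compactness, weak closedness of the norm-closed convex set $K$, pointwise identification of the weak limit with $C_{[m,M]}f_h$ via weak continuity of evaluations, lower semicontinuity of $Q$ under pointwise convergence, and uniqueness of the $Q$-minimizer over $K$ --- is correct and yields $C_{[m,M]}f_h=f_h$, hence $m\le f_h\le M$. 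Nothing is missing.
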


Resolvability was introduced to prove the existence of non-constant harmonic functions on transient locally finite resolvable graphs. This result of Benjamini and Schramm carries over to strongly resolvable graphs that need not be locally finite. Before we prove this we need a result on harmonic functions induced by Lipschitz functions.

Let $\sigma \in \mathfrak M(G)$ which is intrinsic with respect to the finite measure $m$ and  suppose now that $(X,b)$ is transient. Using the Royden decomposition we define the map
$$\Phi \colon  \mathfrak D(G) \cap C_{b}(\overline{X}^\sigma)  \to \mathfrak  D(G) \cap \mathfrak H(G), \quad f \mapsto f_h. $$
Since the Royden decomposition preserves boundedness and $m$ is finite, we even obtain that $\Phi$ maps to $H^1(G,m)$. The following is the main observation in this section, which will be used to construct many non-constant hamronic functions of finite energy. 
 
\begin{lemma}\label{lemma:support capacity and boundary} 
 Let $(X,b)$ be a graph and let $\sigma$ be an intrinsic metric with respect to a finite measure $m$ that induces the discrete topology such that ${\rm cap}_m(\partial_\sigma X) > 0$. Moreover, let  
  $$S = \{x \in \partial_\sigma X \mid {\rm cap}_m(U \cap \partial_\sigma X) > 0 \text{  all open } x \in U\subseteq \overline X^\sigma\}.$$
 Then 
  $$\ker \Phi \subseteq \{f \in\mathfrak D(G) \cap C_{b}(\overline{X}^\sigma) \mid f|_S = 0\}.  $$
  In particular, if $\Phi(f)$ is constant, then $f$ is constant on $S$. 
\end{lemma}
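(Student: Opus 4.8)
The plan is to prove the contrapositive in spirit: take $f \in \ker\Phi$ and show that $f$ must vanish on every point of $S$. Since $f \in \ker\Phi$ means $f_h = 0$, the Royden decomposition (Theorem~\ref{Royden}) tells us that $f = f_0 \in \mathfrak{D}_0(G)$, i.e. $f$ lies in the closure of $C_c(X)$ with respect to $\aVd_o$. The key intuition is that functions in $\mathfrak{D}_0(G)$ are ``small at the boundary'' in a capacity sense, so they cannot be bounded away from zero on any portion of $\partial_\sigma X$ that carries positive capacity in every neighborhood. First I would fix $x \in S$ and suppose for contradiction that $f(x) \neq 0$; by continuity of $f$ on $\overline{X}^\sigma$ (recall $f \in C_b(\overline{X}^\sigma)$), there is an open neighborhood $U \ni x$ in $\overline{X}^\sigma$ on which $|f| \geq c > 0$ for some constant $c$. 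By the definition of $S$, this neighborhood satisfies ${\rm cap}_m(U \cap \partial_\sigma X) > 0$.

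The core of the argument is then to show that a function in $\mathfrak{D}_0(G)$ cannot be bounded below in absolute value on a set of positive capacity in the boundary. I would approximate $f$ by $\varphi_n \in C_c(X)$ with $\aV{f - \varphi_n}_o \to 0$. Since convergence in $\aVd_o$ implies pointwise convergence (Lemma~\ref{lemma:properties of energy space}(b)) and control of the energy, the functions $\frac{1}{c}(|f| - |\varphi_n|)$ (suitably contracted into $[0,1]$ via $(\,\cdot\wedge 1)_+$) should serve as admissible test functions witnessing that ${\rm cap}_m(U \cap \partial_\sigma X)$ is arbitrarily small. Concretely, on $U \cap X$ we have $|f| \geq c$, and the $\varphi_n$ having compact support means that near the boundary point $x$ the functions $g_n := \left(1 - \tfrac{1}{c}|\varphi_n|\right)_+ \wedge 1$ are equal to $1$ on $U \cap X$ outside the (finite) support of $\varphi_n$, while their $\aVd_{Q,m}$-norm is controlled by $\aV{f - \varphi_n}_o$ via the contraction property (Proposition~\ref{theorem:fundamental contraction property}) and finiteness of $m$. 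Since any open neighborhood of $x$ in $\overline{X}^\sigma$ minus a finite subset of $X$ is still an open neighborhood of the boundary portion near $x$, these $g_n$ are admissible in the definition of ${\rm cap}_m(U \cap \partial_\sigma X)$ and drive it to $0$, contradicting $x \in S$.

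The main obstacle I anticipate is the bookkeeping that makes the test functions $g_n$ simultaneously (i) admissible for the capacity of the boundary set $U \cap \partial_\sigma X$ — meaning $\geq 1$ on an open neighborhood of that set — and (ii) small in the $\aVd_{Q,m}$-norm. The subtlety is that $g_n$ equals $1$ only where $\varphi_n$ vanishes, so one must use that $\varphi_n$ has finite support together with the fact (from the preliminaries) that removing a finite set from an open neighborhood of a boundary set again yields an open neighborhood of that boundary set in $\overline{X}^\sigma$. Ensuring that $g_n \geq 1_{W \cap X}$ for a genuinely open $W \supseteq U \cap \partial_\sigma X$ requires being careful that $W = U \setminus \supp\varphi_n$ remains open and still contains the relevant boundary points, which it does precisely because finite subsets of $X$ are closed in $\overline{X}^\sigma$. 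Once the test functions are correctly set up, the norm estimate is routine from contraction invariance of $Q$ and finiteness of $m$.

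The final clause follows immediately: if $\Phi(f)$ is constant, say $\Phi(f) = \gamma \cdot 1$, then $f - \gamma \cdot 1 \in \ker\Phi$ (harmonicity and the uniqueness in the Royden decomposition make $\Phi$ linear with $\Phi(1) = 1$ in the transient case, since constants lie in $\mathfrak{H}(G)$), so the inclusion just proved forces $f - \gamma$ to vanish on $S$, i.e. $f$ is constant (equal to $\gamma$) on $S$.
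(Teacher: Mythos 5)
Your overall strategy is sound and is essentially the same mechanism as the paper's, run in the opposite direction: the paper proves the contrapositive, taking a minimizing sequence $(g_n)\subseteq C_c(X)$ for $Q(f_h)=\inf_g Q(f-g)$ (upgraded, via a truncation-and-dominated-convergence claim, to converge in $H^1(G,m)$) and using multiples of $f-g_n$ as capacity test functions near a point of $S$; you instead use $f=f_0\in\mathfrak{D}_0(G)$ (note that transience, needed for the Royden decomposition and for $\Phi$ to be defined, follows from ${\rm cap}_m(\partial_\sigma X)>0$ by Corollary~\ref{coro:polarity boundary}) and its defining approximation $\varphi_n\in C_c(X)$, which is if anything leaner. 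However, your concrete test functions are wrong, and this is a genuine gap. For $g_n=\bigl(1-\tfrac1c|\varphi_n|\bigr)_+\wedge1$ the claimed control of $\aV{g_n}_{Q,m}$ by $\aV{f-\varphi_n}_o$ fails: $g_n$ is built from $\varphi_n$ alone, and since $\varphi_n\to f$ pointwise we have $g_n\to\bigl(1-\tfrac1c|f|\bigr)_+$ pointwise, so by dominated convergence $\aV{g_n}_m\to\aV{(1-\tfrac1c|f|)_+}_m$ and by lower semicontinuity $\liminf_n Q(g_n)\geq Q\bigl((1-\tfrac1c|f|)_+\bigr)$; neither limit is $0$ in general. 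Thus these functions only show that ${\rm cap}_m(U\cap\partial_\sigma X)$ is finite, not that it vanishes, and no contradiction with $x\in S$ is reached. Your alternative $\tfrac1c(|f|-|\varphi_n|)$ is not rescued by Proposition~\ref{theorem:fundamental contraction property} either: $|f|-|\varphi_n|$ is not a contraction applied to $f-\varphi_n$, and its edge differences can strictly exceed those of $f-\varphi_n$ (take $f(x)=1$, $f(y)=-1$, $\varphi_n(x)=2$, $\varphi_n(y)=0$: then $f-\varphi_n$ has edge difference $0$ while $|f|-|\varphi_n|$ has edge difference $2$), so $Q(|f|-|\varphi_n|)\leq Q(f-\varphi_n)$ is false in general.

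The repair is to build the test functions from the difference $f-\varphi_n$ itself, which is exactly what the paper does with $f-g_n$: set $h_n:=\tfrac1c|f-\varphi_n|\wedge1$. Since $t\mapsto|t|\wedge c$ is a contraction, $Q(h_n)\leq\tfrac1{c^2}Q(f-\varphi_n)\to0$; since $\varphi_n\to f$ pointwise by Lemma~\ref{lemma:properties of energy space}(b), $0\leq h_n\leq1$ and $m$ is finite, dominated convergence gives $\aV{h_n}_m\to0$, so $h_n\in H^1(G,m)$ with $\aV{h_n}_{Q,m}\to0$. On $\bigl(U\setminus\supp\varphi_n\bigr)\cap X$ we have $|f-\varphi_n|=|f|\geq c$, hence $h_n=1$ there, and $U\setminus\supp\varphi_n$ is open in $\overline{X}^\sigma$ and contains $U\cap\partial_\sigma X$ --- precisely the point about finite subsets of $X$ being closed in $\overline{X}^\sigma$ that you already made. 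Therefore ${\rm cap}_m(U\cap\partial_\sigma X)\leq\aV{h_n}_{Q,m}^2\to0$, contradicting $x\in S$. With this single replacement your argument is correct, and your treatment of the final clause (linearity of $\Phi$, $\Phi(1)=1$, hence $f-\gamma\in\ker\Phi$) is fine as written and matches the paper.
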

\begin{proof} 
 Let  $f \in \mathfrak D(G) \cap C_{b}(\overline{X}^\sigma)$  with $f|_S \neq 0$.    We show $f_h = \Phi(f) \neq 0$. Without loss of generality there exists $\varepsilon > 0$ such that $f(x) \geq \varepsilon$ for some $x \in S$.  Using Theorem~\ref{Royden} we choose a sequence $(g_n)$ in $C_c(X)$ with $Q(f_h) = \inf_{n \geq 1} Q(f - g_n) = \lim_{n \to \infty} Q(f-g_n)$. As noted above we have $f_h \in H^1(G,m)$.
 
 {\em Claim:} The sequence $(g_n)$ can be chosen such that $f - g_n \to f_h$ in $H^1(G,m)$.   
 
 {\em Proof of the claim.} It suffices to show that $(g_n)$ can be chosen such that $f - g_n \to f_h$ in $\ell^2(X,m)$.  We can assume  $\aV{g_n}_\infty \leq 2 \aV{f}_\infty$, as otherwise  we could write
 $$ ((f - g_n)\wedge \aV{f}_\infty) \vee (-\aV{f}_\infty) = f - h_n $$
 with appropriate $h_n \in C_c(X)$. These satisfy $\aV{h_n}_\infty \leq 2\aV{f}_\infty$ and, using the compatibility of $Q$ with contractions,  also
 $$Q(f_h) \leq Q(f-h_n)  \leq Q(f-g_n). $$

 Now assume $(g_n)$ is chosen with $\aV{g_n}_\infty \leq 2 \aV{f}_\infty$. The Royden decomposition shows $Q(f_0 - g_n) \to 0$. Moreover, our assumption ${\rm cap}_m(\partial_\sigma X) > 0$ implies transience of the graph, see Corollary~\ref{coro:polarity boundary}. On transient graphs convergence  on $\mathfrak D_0(G)$ with respect to $Q$ implies pointwise convergence, see e.g. \cite[Theorem~B.2]{KLSW}. Hence, we obtain $f_0 - g_n \to 0$ pointwise, which implies $f - g_n \to f_h$ pointwise. Since the functions $f- g_n$ are uniformly bounded by $3 \aV{f}_\infty$ and since $m$  is a finite measure, Lebesgue's dominated convergence theorem yields $f - g_n \to f_h$ in $\ell^2(X,m)$, which shows the claim.

Let now $(g_n)$ be a sequence as in the claim.   Since $\sigma$ induces the discrete topology, the compactly supported function $g_n$ can be continuously extended to $\overline{X}^\sigma$ by letting $g_n = 0$ on $\partial_\sigma X$. Then $f - g_n$ is continuous on $\overline{X}^\sigma$ with $f-g_n = f$ on $\partial_\sigma X$.  By the continuity of $f$ there exists a relatively open neighborhood $U_x \subseteq \partial_\sigma X$ of $x$ in $\partial_\sigma X$ with $f - g_n = f \geq \varepsilon/2$ on $U_x$ for all $n \in \N$.  Using that $x \in S$ we obtain ${\rm cap}_m(U_x) > 0$. By the continuity of $f - g_n$ there exists an open  $O_n \subseteq \overline{X}^\sigma$ with $U_x \subseteq O_n$ and $f-g_n \geq \varepsilon/4$ on $O_n$ such that $4(f-g_n)/\varepsilon \geq 1$ on $X\cap O_n$. The way the capacity is defined for subsets of the boundary yields 
 $$\frac{16}{\varepsilon^2} \aV{f- g_n}^2_{Q,m} \geq {\rm cap}_m (X \cap O_n) \geq {\rm cap}_m(U_x).$$
 Using $f - g_n \to f_h$ in $H^1(G,m)$, we obtain
 $$\aV{f_h}^2_{Q,m} = \lim_{n \to \infty} \aV{f - g_n}_{Q,m}^2 \geq \frac{\varepsilon^2}{16} {\rm cap}_m(U_x) > 0$$
 and arrive at $f_h \neq 0$. 
 
 For the 'In particular'-part assume that $\Phi(f)$ is constant equal to $C$. Since the harmonic part of a constant function is just the constant function itself, we obtain $\Phi(f-C) = \Phi(f) - C = 0$. Hence, what we previously proved shows $f = C$ on $S$.  
\end{proof}

\begin{remark} The set $S$ in this lemma is the support of the outer measure ${\rm cap}_m$ restricted to subsets of $\partial_\sigma X$.
\end{remark}

 Assume $\sigma \in \mathfrak M(G)$ is intrinsic with respect to the finite measure $m$.  We denote the set of bounded Lipschitz functions with respect to $\sigma$ by ${\rm Lip}_b(X) = {\rm Lip}_{b,\sigma}(X)$.  If  $f \in {\rm Lip}_b(X)$, then  Lemma~\ref{potential} shows that $f\in  H^1(G,m)$ (the lemma implies $f \in \mathfrak D(G)$ and the boundedness of $f$ yields $f \in \ell^2(X,m)$). Moreover, $f$ can be uniquely extended to a Lipschitz function $\overline{X}^\sigma$, which we also denote by $f$ with a slight abuse of notation. Hence, ${\rm Lip}_b(X) \subseteq \mathfrak D(G) \cap C(\overline{X}^\sigma)$. This observation is used in the proof of the following theorem.

\begin{theorem}

 Let $(X,b)$ be a graph and let $\sigma$ be an intrinsic metric with respect to a finite measure $m$ that induces the discrete topology such that ${\rm cap}_m(\partial_\sigma X) > 0$. Moreover, let  
  $$S = \{x \in \partial_\sigma X \mid {\rm cap}_m(U \cap \partial_\sigma X) > 0 \text{  all open } x \in U\subseteq \overline X^\sigma\}.$$
 Then $\dim (\mathfrak D(G) \cap \mathfrak H(G)) \geq |S|$. In particular, if $|S| \geq 2$, then the graph admits a non-constant harmonic function of finite energy. 
\end{theorem}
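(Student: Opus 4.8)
The plan is to feed suitable bounded Lipschitz functions through the projection $\Phi$ and to read off linear independence of their harmonic parts from their boundary values on $S$. First I would record the two structural facts I intend to use. Since ${\rm cap}_m(\partial_\sigma X) > 0$, Corollary~\ref{coro:polarity boundary} guarantees that $G$ is transient, so the Royden decomposition (Theorem~\ref{Royden}) applies and $\Phi$ is defined on $V := \mathfrak D(G) \cap C_b(\overline{X}^\sigma)$; moreover $\Phi$ is linear, because uniqueness in Theorem~\ref{Royden} forces the decomposition $f = f_0 + f_h$ to respect sums and scalar multiples. The decisive input is Lemma~\ref{lemma:support capacity and boundary}, which tells us that $\ker \Phi \subseteq \{f \in V \mid f|_S = 0\}$, i.e.\ the harmonic part of a function in $V$ remembers its values on $S$.

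Next I would construct, for any finite collection of distinct points $x_1, \dots, x_n \in S$, functions in $V$ whose boundary values on $\{x_1,\dots,x_n\}$ form a basis of $\R^n$. Put $\delta := \min_{i \neq j} \sigma(x_i,x_j) > 0$ and set $g_i := \bigl(1 - \tfrac{1}{\delta}\,\sigma(\cdot, x_i)\bigr)_+$, where $\sigma(\cdot, x_i)$ denotes the distance to $x_i$ taken in the completion $\overline{X}^\sigma$. Each $g_i$ is a bounded Lipschitz function with respect to $\sigma$, hence by Lemma~\ref{potential} lies in $\mathfrak D(G)$ and, being bounded, in $V$ (its continuous extension to $\overline{X}^\sigma$ is given by the same formula, as $\sigma(\cdot,x_i)$ is continuous on $\overline{X}^\sigma$). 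By the choice of $\delta$ the extensions satisfy $g_i(x_i) = 1$ and $g_i(x_j) = 0$ for $j \neq i$, so $g_i(x_j) = \delta_{ij}$ on $S$.

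I would then derive the dimension bound directly from linearity and the kernel inclusion. If $\sum_{i=1}^n c_i \Phi(g_i) = 0$, then $\Phi\bigl(\sum_i c_i g_i\bigr) = 0$, so $\sum_i c_i g_i \in \ker\Phi$ and therefore vanishes on $S$ by Lemma~\ref{lemma:support capacity and boundary}; evaluating at $x_j$ gives $c_j = 0$. Hence $\Phi(g_1), \dots, \Phi(g_n)$ are linearly independent elements of $\mathfrak D(G) \cap \mathfrak H(G)$, and $\dim(\mathfrak D(G) \cap \mathfrak H(G)) \geq n$. If $S$ is finite this already gives $\geq |S|$ on taking $\{x_1,\dots,x_n\} = S$; if $S$ is infinite it shows the space is infinite-dimensional, and to reach the stated bound $\geq |S|$ one assembles a single linearly independent family indexed by $S$ (for instance from McShane extensions of Lipschitz bump functions, or from truncated distance functions to the points of $S$), using the same kernel argument. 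For the \emph{in particular} part, $|S| \geq 2$ yields two linearly independent harmonic functions of finite energy; since the constants form only a one-dimensional subspace of $\mathfrak D(G) \cap \mathfrak H(G)$, some harmonic function of finite energy must be non-constant.

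I expect the main obstacle to lie entirely in the infinite case: the finite separation by truncated distance functions is routine, but producing a linearly independent family of boundary restrictions of the full cardinality $|S|$ (rather than only arbitrarily large finite subfamilies) is delicate, since points of $S$ may accumulate and the restrictions of distance functions to $S$ need not be manifestly independent. Everything else --- linearity of $\Phi$, membership of the $g_i$ in $V$, and the passage from vanishing boundary values to vanishing coefficients --- is immediate from the results already established.
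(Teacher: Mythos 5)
Your proposal follows exactly the paper's route: transience via Corollary~\ref{coro:polarity boundary}, linearity of the Royden projection $\Phi$, and the kernel inclusion of Lemma~\ref{lemma:support capacity and boundary} to convert linear independence of restrictions to $S$ into linear independence of harmonic parts. Your finite count with the bump functions $g_i=(1-\sigma(\cdot,x_i)/\delta)_+$ is correct (they lie in $\mathfrak D(G)\cap C_b(\overline{X}^\sigma)$ by Lemma~\ref{potential}, and $g_i(x_j)=\delta_{ij}$), and it settles the theorem for finite $S$, for countably infinite $S$ (an infinite-dimensional space has dimension at least $\aleph_0$), and the ``in particular'' clause.

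The genuine gap is the bound $\dim(\mathfrak D(G)\cap\mathfrak H(G))\geq|S|$ for uncountable $S$ --- precisely the regime relevant to the paper's Corollary~\ref{harmonic}, where $S$ is shown to be uncountable. You flag this yourself, but neither of your suggested remedies closes it: there is no general fact that truncated distance functions on a metric space are linearly independent (take four points with $\sigma(a_1,a_2)=\sigma(b_1,b_2)=1$ and all cross-distances $\tfrac12$; then $\sigma_{a_1}+\sigma_{a_2}-\sigma_{b_1}-\sigma_{b_2}=0$ there), and since an uncountable $S$ inside the separable space $\overline{X}^\sigma$ has accumulating points, bump functions have unavoidably overlapping supports, so no fixed choice of radii makes all finite subfamilies manifestly independent. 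The paper compresses this step into the inequality $\dim{\rm Lip}_b(X)\geq\dim{\rm Lip}_b(S)\geq|S|$, using McShane extension plus truncation to pass from $S$ to $\overline{X}^\sigma$; the second inequality is true but requires an idea of a different kind from bumps: for infinite $S$ the space ${\rm Lip}_b(S)$ is an infinite-dimensional Banach space, hence has Hamel dimension at least $\mathfrak{c}$, while $|S|\leq\mathfrak{c}$ because $S$ is a subset of a separable metric space. (A hands-on alternative: if $S$ contains an infinite $\varepsilon$-separated set, sum disjoint bumps along an almost disjoint family of $\mathfrak{c}$ subsets of $\N$; otherwise $S$ contains a Cauchy sequence of distinct points $q_n\to q$, and the functions $(\sigma(q,\cdot)\wedge 1)^t$, $t\in[1,2]$, form $\mathfrak{c}$ linearly independent elements of ${\rm Lip}_b(S)$.) Supplying either argument, combined with your kernel step, would complete the proof.
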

\begin{proof}  
Without loss of generality we can assume $|S| \geq 2$ for otherwise the statement is trivial because constant functions belong to $\mathfrak D(G) \cap \mathfrak H(G)$. Let $f_1,\ldots,f_n \in {\rm Lip}_b(X)$.  The previous lemma shows that if $f_1|_S,\ldots,f_n|_S$ are linearly independent, then $\Phi(f_1),\ldots,\Phi(f_n)$ are linearly independent in $\mathfrak D(G) \cap \mathfrak H(G)$. With this at hand the statement follows from $ \dim {\rm Lip}_b(X) \geq \dim {\rm Lip}_b(S) \geq |S|$ (use that any bounded Lipschitz function on $S$ can be extended to a bounded Lipschitz function on $\overline{X}^\sigma$). 
\end{proof}

\begin{corollary}[Existence of non-constant harmonic functions] \label{harmonic}
 Let $G = (X,b)$ be a strongly resolvable graph and let $\sigma$ be a resolving metric that is intrinsic with respect to the finite measure $m$. If  ${\rm cap}_m(\partial_\sigma X) > 0$, then the space of bounded harmonic functions of finite energy is infinite dimensional.
\end{corollary}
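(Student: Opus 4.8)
The plan is to deduce the corollary directly from the preceding theorem by showing that the support set $S$ of the capacity on the boundary is infinite; strong resolvability is exactly what forces this infinitude. Throughout, let $\sigma$ be the given resolving metric, intrinsic with respect to the finite measure $m$, so that by the definition of strong resolvability ${\rm cap}_m(\{x\}) = 0$ for every $x \in \partial_\sigma X$. Let
$$S = \{x \in \partial_\sigma X \mid {\rm cap}_m(U \cap \partial_\sigma X) > 0 \text{ for all open } x \in U \subseteq \overline{X}^\sigma\}$$
be the set appearing in the preceding theorem. Since constant functions are bounded, harmonic and of finite energy, it suffices to produce infinitely many linearly independent harmonic functions of finite energy, and the theorem reduces this to establishing $|S| = \infty$.

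First I would show that the complement of the support carries no capacity, i.e. ${\rm cap}_m(\partial_\sigma X \setminus S) = 0$. By the very definition of $S$, every point of $\partial_\sigma X \setminus S$ possesses an open neighborhood $U \subseteq \overline{X}^\sigma$ with ${\rm cap}_m(U \cap \partial_\sigma X) = 0$. These neighborhoods form an open cover of $\partial_\sigma X \setminus S$. Because $X$ is countable and dense in $\overline{X}^\sigma$, the completion is separable, hence second countable and therefore Lindelöf, so the cover admits a countable subcover. Countable subadditivity of the outer measure ${\rm cap}_m$ then yields ${\rm cap}_m(\partial_\sigma X \setminus S) = 0$. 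Combining this with subadditivity gives ${\rm cap}_m(S) \geq {\rm cap}_m(\partial_\sigma X) - {\rm cap}_m(\partial_\sigma X \setminus S) = {\rm cap}_m(\partial_\sigma X) > 0$, where the last inequality is the standing hypothesis of the corollary.

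Next I would argue that $S$ must be infinite. If $S$ were finite, subadditivity of ${\rm cap}_m$ together with strong resolvability would force ${\rm cap}_m(S) \leq \sum_{x \in S} {\rm cap}_m(\{x\}) = 0$, contradicting ${\rm cap}_m(S) > 0$. Hence $|S| = \infty$. Applying the preceding theorem we obtain $\dim(\mathfrak D(G) \cap \mathfrak H(G)) \geq |S| = \infty$. Finally, to match the stronger conclusion about \emph{bounded} harmonic functions, I would recall that the linearly independent harmonic functions produced in the theorem arise as $\Phi(f)$ for $f \in {\rm Lip}_b(X)$, hence bounded; since the Royden decomposition of Theorem~\ref{Royden} preserves boundedness, each $\Phi(f)$ is bounded. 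Thus the space of bounded harmonic functions of finite energy is infinite dimensional.

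The main obstacle is the second step, namely discarding $\partial_\sigma X \setminus S$ without losing capacity: this is where one genuinely needs both the topological input (separability of $\overline{X}^\sigma$, to pass to a countable subcover) and the measure-theoretic structure of ${\rm cap}_m$ as a countably subadditive outer measure. Once that is in hand, the passage from ${\rm cap}_m(S) > 0$ and the vanishing of the capacity of singletons to the infinitude of $S$ is a one-line subadditivity argument, and the reduction to the preceding theorem is immediate.
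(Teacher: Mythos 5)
Your proposal is correct and follows essentially the same route as the paper's own proof: identify the complement of the support $S$ as the set of points with capacity-zero neighborhoods, use separability of $\overline{X}^\sigma$ together with countable subadditivity of ${\rm cap}_m$ to conclude ${\rm cap}_m(\partial_\sigma X \setminus S) = 0$ and hence ${\rm cap}_m(S) > 0$, then invoke strong resolvability plus subadditivity to rule out a small $S$, and finish with the preceding theorem. The only (harmless) differences are that the paper deduces $S$ is uncountable where you settle for $S$ infinite, which suffices, and that you explicitly record why the resulting harmonic functions are bounded, a point the paper leaves implicit in its appeal to the theorem.
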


\begin{proof}
 Let $S$ be the support of the capacity on the boundary introduced in the previous theorem. Its complement is given by
 $$\partial_\sigma X \setminus S =  \{ x \in \partial_\sigma X \mid \text{ ex. open } x\in  U\subseteq \overline X^\sigma \text{ with }  {\rm cap}_m(U \cap \partial_\sigma X)  = 0\}.$$
 Since $(\overline{X}^\sigma,\sigma)$ is separable, its topology has a countable basis. Hence, the $\sigma$-sub-additivity of  ${\rm cap}_m$ yields  ${\rm cap}_m(\partial_\sigma X \setminus S) = 0$. Using the subadditivity of the capacity again shows
 $${\rm cap}_m(S) = {\rm cap}_m(\partial_\sigma X) > 0.$$
 By our assumption every point in $S$ has capacity $0$. Hence, $S$ must be uncountable for otherwise $\sigma$-subadditivity would imply ${\rm cap}_m(S) = 0$, which contradicts our previous considerations. With this at hand the claim follows from the previous theorem.
\end{proof}

 This theorem is a version of \cite[Theorem~3.1]{BS} for strongly resolvable but possibly non-locally finite graphs. We had to replace the transience assumption of \cite{BS} by the stronger ${\rm cap}_m(\partial_\sigma X) > 0$. As discussed in Theorem~\ref{cap-recurrence}, for some classes of graphs transience implies this condition. We mention these situations in the following corollary.

\begin{corollary}\label{coro:existence of harmonic funcitons}
 Let $G=(X,b)$ be a transient, strongly resolvable graph and let one of the following conditions be fulfilled:
 \begin{enumerate}[(a)]
  \item $G$ is locally finite.
  \item $\sigma$ is a resolving metric and $(X,\sigma)$ is totally bounded.
 \end{enumerate}
 Then the space of bounded harmonic functions of finite energy is infinite dimensional.
\end{corollary}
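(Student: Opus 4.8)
The plan is to reduce the statement directly to Corollary~\ref{harmonic} by verifying that its capacity hypothesis ${\rm cap}_m(\partial_\sigma X) > 0$ holds automatically under either assumption (a) or (b). First I would unpack strong resolvability: by Definition~\ref{def-resolv} there is a resolving metric $\sigma$ that induces the discrete topology, is intrinsic with respect to some finite measure $m$, and satisfies ${\rm cap}_m(\{x\}) = 0$ for every $x \in \partial_\sigma X$. In case (a) I would use this very $\sigma$, and in case (b) the statement already provides that the resolving metric $\sigma$ additionally makes $(X,\sigma)$ totally bounded.

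The central step is to show ${\rm cap}_m(\partial_\sigma X) > 0$, which I would prove by contradiction. Suppose instead that ${\rm cap}_m(\partial_\sigma X) = 0$. Then $\sigma$ meets the standing hypotheses of Theorem~\ref{cap-recurrence}, namely a metric inducing the discrete topology that is intrinsic with respect to the finite measure $m$. Under assumption (a) the graph $G$ is locally finite, so Theorem~\ref{cap-recurrence}~(a) applies; under assumption (b) the space $(X,\sigma)$ is totally bounded, so Theorem~\ref{cap-recurrence}~(b) applies. In either case Theorem~\ref{cap-recurrence} forces $G$ to be recurrent, which contradicts the hypothesis that $G$ is transient. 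Hence ${\rm cap}_m(\partial_\sigma X) > 0$.

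With this inequality in hand, $\sigma$ satisfies all the hypotheses of Corollary~\ref{harmonic}: it is a resolving metric intrinsic with respect to the finite measure $m$ and has boundary of positive capacity. Corollary~\ref{harmonic} then immediately yields that the space of bounded harmonic functions of finite energy is infinite dimensional, completing the proof. There is no serious obstacle here beyond ensuring the hypotheses align; the only point requiring care is that a single metric $\sigma$ must simultaneously play the role of the resolving metric in Corollary~\ref{harmonic} and satisfy the structural conditions of Theorem~\ref{cap-recurrence}, which is exactly why cases (a) and (b) are singled out, since those are precisely the settings in which vanishing boundary capacity is known to entail recurrence.
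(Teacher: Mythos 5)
Your proof is correct and follows essentially the same route as the paper: the paper likewise invokes Theorem~\ref{cap-recurrence} (in contrapositive form, via transience) to deduce ${\rm cap}_m(\partial_\sigma X) > 0$ for the resolving metric, and then applies Corollary~\ref{harmonic}. You have merely spelled out the contradiction argument and the bookkeeping of which metric is used in each case, which the paper leaves implicit.
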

\begin{proof}
By Theorem~\ref{cap-recurrence} both conditions imply ${\rm cap}_m(\partial_\sigma X) > 0$ with respect to a resolving metric $\sigma$. Hence, the claim follows from the previous corollary.
\end{proof}

Constructing harmonic functions from  functions on a potential theoretic boundary (the support of the capacity on the metric boundary) is reminiscent of solving the Dirichlet problem. Under suitable additional conditions on the graph and on the function on the boundary this can be made precise.

\begin{remark}[Solving the Dirichlet problem on $S$ for uniformly transient graphs]
Let $(X,b)$ be a  graph with $\mathfrak D_0(G) \subseteq C_0(X)$, where $C_0(X)$ denotes the uniform closure of $C_c(X)$. Graphs with this property are called {\em uniformly transient}. As the name suggests uniformly transient graphs are transient, see \cite{KLSW} for this fact and further background on uniform transience.   Let $\sigma \in \mathfrak M(G)$ be intrinsic with respect to the finite measure $m$ and let $S$ denote the support of the capacity on the boundary discussed above. Then for any bounded Lipschitz function $\varphi \colon S \to \mathbb R$ the Dirichlet problem 
$$\begin{cases}
   h \in \mathfrak H (G) \cap \mathfrak D(G)  \\
  h \in C_b \left(\overline{X}^\sigma \right)  \text{ with } h|_S = \varphi
  \end{cases}
$$
has a unique solution.

Uniqueness: This follows directly from Lemma~\ref{lemma:support capacity and boundary}.

Existence: The bounded Lipschitz function $\varphi \colon S \to \mathbb R$ can be extended to a bounded Lipschitz function $f \colon \overline{X}^\sigma \to \mathbb R$. Consider the Royden decomposition $f = f_0 + f_h$ with $f_0 \in \mathfrak D_0(G) \subseteq C_0(X)$ and harmonic $f_h \in \mathfrak D(G)$. Any sequence in $X$ converging to a point in $\partial_\sigma X$ must eventually leave any finite set. Hence, $f_0$ can be extended to a continuous function on $\overline{X}^\sigma$ by letting $f_0 = 0$ on $\partial_\sigma X$. This shows that also $f_h = f - f_0$ has a bounded continuous extension to $\overline{X}^\sigma$ with $f_h|_{\partial_\sigma X} = f|_{\partial_\sigma X}$. By constructions this yields $f_h|_S = \varphi$.

\end{remark}

\section{Planar and canonically compactifiable graphs}
In this section we show that circle packings of bounded geometry and
hence locally finite planar graphs of bounded geometry are always
strongly resolvable. Moreover, we prove that canonically
compactifiable graphs   are never strongly resolvable showing that
planar graphs of bounded geometry can never be canonically
compactifiable.

First we recall the notion of circle packings and their contact
graphs. For an extensive background on these topics we refer to the
book \cite{Nach}.

\begin{definition}[Circle packing and subordinated graphs]
 A {\em circle packing} is a set $X \neq \emptyset$ and two maps $r \colon X \to (0,\infty)$ and $\varphi \colon X \to \R^2$ such that the collection of closed circles $C_x = B_{r(x)}(\varphi(x))$, $x \in  X,$ in $\R^2$ satisfies  $C_x^\circ \cap C_y^\circ = \emptyset$ whenever $x \neq y$. It is called {\em bounded} if $\bigcup_{x \in X} C_x$ is a bounded set.   An edge weight $b$ on $X$ is called {\em subordinate} to the circle packing if $b(x,y) > 0$ implies $C_x \cap C_y \neq \emptyset$.
\end{definition}

\begin{remark}
In what follows we simply write $C_x, x \in X,$ to denote a circle packing.  The {\em contact graph} or {\em nerve} of a circle packing $C_x, x \in X$, is the combinatorial graph on $X$ with $x \sim y$  if $C_x \cap C_y \neq \emptyset$. Hence, an edge weight $b$ on $X$ is subordinate to the circle packing if and only if the induced discrete graph is a subgraph of the contact graph.
\end{remark}

The following is our main observation in this section.

\begin{theorem}
Let $C_x, x \in X,$ be a bounded circle packing and suppose $(X,b)$ is subordinate to the circle packing and has bounded geometry. Then $(X,b)$ is strongly resolvable.  In particular, if $(X,b)$ is transient, then it possesses a non-constant harmonic function of finite energy.
\end{theorem}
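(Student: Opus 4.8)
The plan is to exhibit a single metric witnessing strong resolvability, namely the Euclidean metric pulled back through the centres of the circles, and then to show that every boundary point is polar by producing a function of finite energy that diverges there.

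First I would set $\sigma(x,y) := |\varphi(x)-\varphi(y)|$. Since the open discs $C_x^\circ$ are pairwise disjoint, $\sigma(x,y)\ge r(x)+r(y)>0$ for $x\ne y$, so $\sigma$ is a genuine metric and $B^\sigma_{r(x)}(x)=\{x\}$, i.e. $\sigma$ induces the discrete topology. Because the packing is bounded and the discs are disjoint, $\sum_x r(x)^2 \le \pi^{-1}\operatorname{area}\big(\bigcup_x C_x\big)<\infty$. Using $\sigma(x,y)^2\le (r(x)+r(y))^2\le 2r(x)^2+2r(y)^2$, the symmetry of $b$, and bounded geometry ($\deg(x)\le D$), I obtain
\[
\sum_{x,y} b(x,y)\sigma(x,y)^2 \le 4\sum_x \deg(x)\, r(x)^2 \le 4D\sum_x r(x)^2 <\infty,
\]
so $\sigma\in\mathfrak M(G)$; concretely $\sigma$ is intrinsic with respect to the finite measure $m=m_\sigma$. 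Moreover $(X,\sigma)$ is isometric to a bounded subset of $\R^2$, hence totally bounded, a fact I reuse for the final assertion.

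By the definition of strong resolvability (Definition~\ref{def-resolv}) it then remains to prove $\operatorname{cap}_m(\{\xi\})=0$ for every $\xi\in\partial_\sigma X$, which by Lemma~\ref{capacity-trick} is equivalent to finding $f\in\mathfrak D(G)$ with $\liminf_{x\to\xi} f(x)=\infty$. Each such $\xi$ is a limit of centres, and since infinitely many disjoint discs of a fixed size cannot sit in a bounded region, the radii of nearby circles tend to $0$; in particular $d_x:=|\varphi(x)-\xi|\to 0$ along any approaching sequence and $r(x)\le d_x$ for all but one $x$. The guiding principle is that a single point is polar in the plane, witnessed on the continuous side by the doubly logarithmic potential, so I would try $f(x)=\log\log(1/d_x)$, made constant for $d_x$ above a fixed threshold. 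Organising $Q(f)=\tfrac12\sum b(x,y)\big|\log\log(1/d_x)-\log\log(1/d_y)\big|^2$ over the dyadic annuli $A_k=\{2^{-k-1}\le d_x< 2^{-k}\}$, the discs with centre in $A_k$ are disjoint and contained in the ball of radius $2^{1-k}$ about $\xi$, so $\sum_{x\in A_k} r(x)^2\lesssim 2^{-2k}$; combined with $\deg(x)\le D$ and $|g'(2^{-k})|\approx 2^k/k$ this bounds the contribution of edges between circles of comparable scale by $\sum_k D/k^2<\infty$, the convergence being exactly what the second logarithm buys.

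The hard part will be the remaining edges, those joining a tiny circle near $\xi$ to a much larger circle whose boundary nearly passes through $\xi$: here the naive radial estimate $|g(d_x)-g(d_y)|\le |g'(d_x)||d_x-d_y|$ is far too lossy, and a single such tangency can, for a badly chosen potential, contribute an amount growing like the square of the scale gap. The resolution I would pursue is to replace the Euclidean radial distance by a scale-adapted substitute — equivalently, to run the length--area (vertex extremal length) method over the annuli $A_k$ — so that a scale-jumping edge is charged only the genuine logarithmic difference $\log\log(1/d_x)-\log\log(1/d_y)$ rather than the full radial slope. Bounded geometry then limits the number of such exceptional tangencies per scale and disjointness limits their total area, and summing the logarithmic contributions over the infinitely many scales separating $\xi$ from a fixed disc again yields a convergent $\sum_k 1/k^2$; this scale-jump control is where I expect essentially all the work to lie. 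Once $\operatorname{cap}_m(\{\xi\})=0$ holds for every $\xi\in\partial_\sigma X$, the metric $\sigma$ is a resolving metric and $(X,b)$ is strongly resolvable. For the final statement, if $(X,b)$ is transient then, since $(X,\sigma)$ is totally bounded, Corollary~\ref{coro:existence of harmonic funcitons}(b) applies to the resolving metric $\sigma$ and produces an (indeed infinite-dimensional) space of bounded harmonic functions of finite energy, so in particular a non-constant one exists.
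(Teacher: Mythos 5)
Your setup coincides with the paper's: the metric $\sigma(x,y)=|\varphi(x)-\varphi(y)|$, the proof that it is a metric inducing the discrete topology, the intrinsicness estimate via disjointness of discs and bounded geometry, the total boundedness of $(X,\sigma)$, and the final appeal to Corollary~\ref{coro:existence of harmonic funcitons}(b) are all exactly what the paper does. The gap is in the core step, ${\rm cap}_m(\{\xi\})=0$, and it is not merely unfinished bookkeeping: the potential $f(x)=\log\log(1/d_x)$ can genuinely have infinite energy, and so can \emph{any} function of $d_x$ alone that diverges at $\xi$. Concretely, take one disc $C_y$ of radius $1$ with $\xi$ on its boundary circle, and tiny discs $C_{x_n}$ tangent to $C_y$ at points accumulating at $\xi$, placed so that $\log\log(1/d_{x_n})\geq 4^n$, with weights $b(x_n,y)=2^{-n}$ and no other edges. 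This star graph is connected, subordinate to a bounded packing, and has bounded geometry ($\deg(y)\leq 1$), yet
\begin{equation*}
Q(f)\;\geq\;\sum_{n} b(x_n,y)\,\bigl|f(x_n)-f(y)\bigr|^2\;\geq\;\sum_n 2^{-n}\bigl(4^n-O(1)\bigr)^2\;=\;\infty .
\end{equation*}
The same configuration defeats any radial profile $g(d_x)$: just choose $d_{x_n}$ with $g(d_{x_n})\geq 2^n$. Note also that your claim that bounded geometry ``limits the number of such exceptional tangencies per scale'' is false in this setting: bounded geometry bounds the \emph{weighted} degree $\sum_y b(x,y)$, not the number of neighbours, and here $y$ has infinitely many tangencies accumulating at $\xi$. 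So the witness required by Lemma~\ref{capacity-trick}(iii) must be adapted to the edge weights $b$ (in the example above $f(x_n)=n$ works, whatever the $d_{x_n}$ are), and no purely geometric potential — including the gestured-at ``scale-adapted substitute'' — can be chosen uniformly in terms of the packing alone. The hard case you flagged is therefore not where the remaining work lies; as set up, it cannot be closed.

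The paper circumvents this entirely by never constructing a diverging function. It uses the clamped tent functions $f_r(x)=\bigl(2-|x-w|/r\bigr)_+\wedge 1$, whose jump across an edge is at most $1$ no matter how many scales the edge spans, so scale-jumping tangencies contribute at most the weighted degree. The area comparison $\lambda\bigl((C_x\cup C_y)\cap B_{2r}(w)\bigr)\geq C\bigl(r(x)^2+r(y)^2\bigr)$ for tangent discs with both centres in $B_{2r}(w)$, together with disjointness, gives the \emph{uniform} bound $Q(f_r)\leq 8\pi\Omega/C$ — bounded, not small — while $\aV{f_r}_m\to 0$ as $r\to 0$. The Banach--Saks theorem then produces Ces\`aro means $g_n$ of a subsequence with $g_n\to 0$ in $H^1(G,m)$ and $g_n\geq 1$ near $w$, which yields ${\rm cap}_m(\{w\})=0$ directly; the diverging finite-energy function exists afterwards by Lemma~\ref{capacity-trick}, but it is built abstractly from these small-norm cutoffs, not from the Euclidean geometry. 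If you want to salvage your route, this is the mechanism you would have to reproduce.
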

\begin{proof} Let $\Omega =  \sup_{x \in X} \deg(x)$. Then $\Omega  < \infty$ due to $(X,b)$ having bounded geometry.   As above we let $r, \varphi$ denote the maps inducing the circle packing. We consider the metric $\sigma$ on $X$ defined by $\sigma(x,y) = |\varphi(x) - \varphi(y)|.$ We first show that $\sigma$ is an intrinsic metric with respect to a finite measure inducing the discrete topology.

The assumption $C_x \cap C_y \neq \emptyset$ for all $x\neq y$ implies
 $$\sigma(x,y) \geq r(x) + r(y) > r(x)$$
for all $y \neq x$. Hence, $\sigma$ induces the discrete topology. Since $b$ is subordinate to the circle packing, we also have  $C_x \cap C_y \neq \emptyset$ whenver $x \sim y$.  For $x \sim y$ this implies  $\sigma(x,y) = r(x) + r(y)$.  We infer
\begin{align*}
   \sum_{x,y\in X} b(x,y) \sigma(x,y)^2 &\leq 2 \sum_{x,y\in X} b(x,y)(r(x)^2 + r(y)^2)\\
   &\leq 4\Omega \sum_{x \in X}r(x)^2\\
   &\leq \frac{4\Omega}{\pi} \lambda (A) < \infty,
  \end{align*}
with $A = \bigcup_{x \in X}C_x$ and $\lambda$ the Lebesgue measure. This shows that $\sigma$ is intrinsic with respect to a finite measure $m$.

Using that $\varphi$ is an isometry we identify  $(X,\sigma)$  with  $\varphi(X)$ in $\R^2$. In particular, the boundary with respect to $\sigma$ is just the Euclidean boundary.   Given $w \in \partial X$ we show  ${\rm cap}_m(\{w\}) = 0$. For $r > 0$ we consider the function
$$f_r \colon X \to \R, \quad f_r(x) =  \left(2 - \frac{|x - w|}{r}\right)_+ \wedge 1. $$
It satisfies $f_r =  1$ on $B_r(w) \cap X$ and $f_r = 0$ on $X \setminus B_{2r}(w)$. Moreover, for $x \sim y$ we have
$$|f_r(x) -f_r(y)|^2 \leq \frac{|x - y|^2}{r^2} = \frac{(r(x) + r(y))^2}{r^2}.$$
Next we compare $(r(x) + r(y))^2$  with  $\lambda ((C_x \cup C_y) \cap B_{2r}(w))$ as long as $x \sim y$ and $x,y \in B_{2r}(w)$.  The boundary point $w$ does not belong to the interior of the discs $C_x,C_y$. This leads to $r(x),r(y) \leq 2r$. Using this observation and that $C_x,C_y$ are tangent, we obtain
\begin{align*}
 \lambda ((C_x \cup C_y) \cap B_{2r}(w)) &=\lambda (C_x \cap B_{2r}(w)) + \lambda (C_y \cap B_{2r}(w)) \\
 &\geq C (r(x)^2 + r(y)^2)
\end{align*}
for some constant $C > 0$ independent of $x,y$ and $r$ (for the last inequality we simply estimated the area of the intersection of two discs with the given parameters). Combining these estimates we infer
\begin{align*}
 Q(f_r) &\leq  \frac{1}{2r^2} \sum_{x,y\in X} b(x,y) (r(x) + r(y))^2 \\
 &\leq  \frac{1}{Cr^2} \sum_{x,y\in X} b(x,y) \lambda ((C_x \cup C_y) \cap B_{2r}(w))\\
 &\leq \frac{2\Omega}{Cr^2} \lambda (B_{2r}(w))\\
 &\leq \frac{8 \pi\Omega}{C}.
\end{align*}
Since $m$ is finite, we also have $\aV{f_r}_m \to 0$, as $r \to 0+$. Both observations combined imply  that $(f_r)$ is bounded in the Hilbert space $H^1(G,m)$. Using the Banach-Saks theorem we obtain a decreasing sequence $r_k \to 0$ such that
$$g_n = \frac{1}{n} \sum_{k = 1}^n f_{r_k}$$
converges in $H^1(G,m)$ to some $g \in H^1(G,m)$. Since convergence in  $H^1(G,m)$  implies $\ell^2(X,m)$-convergence and since $f_{r_k} \to 0$ in $\ell^2(X,m)$, we obtain $g = 0$. By construction we also have $g_n \geq 1$ on $B_{r_n}(w)$, which leads to

$${\rm cap}_m(\{w\}) \leq \inf_{n \in \N} \left( Q(g_n) + \aV{g_n}^2_m\right) = 0.$$

The 'In particular'-part follows from Corollary~\ref{coro:existence of harmonic funcitons} and the observation that $(X,\sigma)$ is totally bounded as it is isometric to a  bounded and hence totally bounded subset of $\R^2$.
\end{proof}

\begin{remark}
We do not assume local finiteness in the previous theorem.  If
$\bigcup_{x \in X} C_x$ is not dense in $\R^2$, then the assumption
on the boundedness of the circle packing can be dropped. In this
case, one just uses inversion at a circle in the complement of
$\bigcup_{x \in X} C_x$ to obtain a bounded circle packing with
isomorphic contact graph. For more details see also the proof of the
following corollary.
\end{remark}

In the following corollary we call a weighted graph planar if the
induced combinatorial graph is planar (for a precise definition of
the latter see e.g. \cite[Section~2.1]{MT}).

\begin{corollary}
 Let $G = (X,b)$ be a locally finite planar graph of bounded geometry. Then $G$ is strongly resolvable. In particular, if $G$ is transient, then $G$ possesses a non-constant harmonic function of finite energy.
\end{corollary}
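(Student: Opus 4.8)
The plan is to reduce the statement to the preceding theorem by realizing the combinatorial graph underlying $G$ as a graph subordinate to a circle packing; the only new ingredient is the Koebe--Andreev--Thurston circle packing theorem together with its extension to infinite planar triangulations. First I would enlarge $G$ to a locally finite planar triangulation $T$ on the same vertex set $X$ whose edge set contains that of $G$, obtained by triangulating the faces of a planar embedding of $G$ while retaining local finiteness. Packing $T$ then produces circles $C_x$, $x \in X$, with contact graph $T$, lying after a Möbius normalization either in all of $\R^2$ or inside a round disc. Since $x \sim y$ in $G$ implies $C_x \cap C_y \neq \emptyset$, the weight $b$ is subordinate to this packing, and bounded geometry, being a condition on $b$ alone, is retained by $(X,b)$.

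If the packing lies in a disc, it is bounded, and the preceding theorem applies at once: $(X,b)$ is a bounded-geometry graph subordinate to a bounded circle packing, hence strongly resolvable. This half uses only the weighted vertex degree bound and says nothing about the combinatorial degree.

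The main obstacle is the parabolic case, in which $\bigcup_{x \in X} C_x$ is dense in $\R^2$, so its complement contains no disc and the inversion trick from the remark after the preceding theorem is unavailable. I would dispose of this case by showing that it is incompatible with transience, and by treating recurrent $G$ directly. For the recurrent half: Lemma~\ref{disc-top} supplies an intrinsic metric $\sigma$ with respect to a finite measure that induces the discrete topology, and Corollary~\ref{coro:polarity boundary} gives ${\rm cap}_m(\partial_\sigma X) = 0$, whence ${\rm cap}_m(\{x\}) = 0$ for every $x \in \partial_\sigma X$; thus a recurrent $G$ is strongly resolvable outright. For the transient half one invokes the type theory of circle packings, by which a transient planar graph of bounded degree is packed hyperbolically, i.e.\ in a disc, so that the dense parabolic picture cannot arise. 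I expect this to be the crux: it rests on the He--Schramm type theorem, and one must reconcile it with the fact that ``bounded geometry'' here bounds the weighted and not the combinatorial degree, passing between $G$ and the combinatorial triangulation that governs the packing type by means of an energy comparison $Q(f) \leq \Omega\, Q_T(f)$ with $\Omega = \sup_x \deg(x)$ and $Q_T$ the combinatorial Dirichlet energy of $T$.

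Finally, the ``In particular'' assertion is immediate: a transient $G$ is locally finite, of bounded geometry, and---by the above---strongly resolvable, so Corollary~\ref{coro:existence of harmonic funcitons}~(a) yields an infinite-dimensional space of bounded harmonic functions of finite energy, and in particular a non-constant one.
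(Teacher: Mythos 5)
Your reduction is sound in two of its three cases, but there is a genuine gap in the one that carries the weight. The hyperbolic case (packing inside a disc) is fine, and your recurrent case is a correct and even pleasant observation: by Lemma~\ref{disc-top} and Corollary~\ref{coro:polarity boundary}, \emph{every} recurrent graph is strongly resolvable, with no planarity needed. The gap is the claim that transience of $(X,b)$ forces the packing of your triangulation $T$ to be hyperbolic. The He--Schramm type theorem applies to disk triangulations of \emph{bounded combinatorial degree}, and neither hypothesis is available here: ``bounded geometry'' in this paper bounds only the weighted degree $\deg(x)=\sum_{y} b(x,y)$, so the combinatorial degree of $G$ may already be unbounded, and triangulating the faces of an embedding (in particular infinite faces) typically inflates combinatorial degrees further; moreover nothing in your construction guarantees that $T$ is simply connected, which the parabolic/hyperbolic dichotomy also requires. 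Your energy comparison $Q(f)\leq \Omega\, Q_T(f)$ does correctly transfer transience from $(X,b)$ to the combinatorial graph $T$ (since $b(x,y)\leq\Omega$ on edges), but without the combinatorial degree bound the implication ``$T$ transient $\Rightarrow$ $T$ is CP-hyperbolic'' fails in general --- the degree bound is known to be essential in the type theorem --- so the transient parabolic case is not excluded, and your proof does not close.

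The paper needs none of this machinery, because the inversion trick is not applied to a circle in the complement of a packing of $G$ itself (which, as you note, may not exist); instead the graph is augmented \emph{before} packing. One adds a single new vertex $o$ joined by one edge to some vertex of $X$; the augmented graph is still planar and locally finite, hence by Claim~4.3 of \cite{Nach} it is the contact graph of a circle packing $C_x$, $x\in X\cup\{o\}$. Since all circles $C_x$, $x\in X$, have interiors disjoint from that of $C_o$, inversion in the circle $\partial C_o$ maps them into the bounded set $C_o$, producing a bounded circle packing whose contact graph restricted to $X$ is the combinatorial graph of $G$, and to which $b$ is subordinate. The preceding theorem then applies directly, with no case distinction on type and no appeal to recurrence or transience. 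To salvage your structure you would either have to prove a type theorem under weighted degree bounds (false as stated) or otherwise exclude the transient parabolic case; the augment-then-invert trick is precisely the way around this obstruction.
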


 \begin{proof}
  According to Claim~4.3 in \cite{Nach} any locally finite graph is isomorphic to the contact graph of a circle packing.  We show that the circle packing can be chosen to be  bounded.  With this at hand the claim follows from the previous theorem.

We add one additional point $o$ and one edge from $o$ to a point in $X$ such that the resulting graph $(X',b')$ is still planar. According to Claim~4.3 in \cite{Nach} the graph $(X',b')$ is isomorphic to a contact graph of a circle packing $C_x,x \in X'$. In order to make this circle packing bounded, we use inversion at the circle $C_o$ corresponding to the new vertex $o$. We denote the inversion map by $\psi$. Since inversions map circles to circles, $\psi(C_x), x\in X,$ is a circle packing inside the bounded set $C_o$. By construction its contact graph is the combinatorial graph underlying $(X,b)$.
 \end{proof}

\begin{remark}
The existence of non-trivial harmonic functions on transient planar graphs of bounded geometry was one of the main results \cite{BS}.  Subsequently, even more explicit description of all harmonic functions of planar graphs were given  via boundaries of sphere packings \cite{ABGM} or square tilings \cite{Geo}. For a unified approach we refer to \cite{HP}.
\end{remark}

Recently the class of canonically compactifiable graphs (see below
for a definition) has gathered some attention. Our previous
considerations allow us to show that locally finite planar graphs of
bounded geometry are never canonically compactifiable.

According to \cite{GHKLW}  a graph $G = (X,b)$ is called {\em canonically compactifiable} if $\mathfrak D(G) \subseteq \ell^\infty(X)$  (see \cite{Puch1} for different equivalent characterizations as well). Examples are $\Z^n$ with $n \geq 3$, see \cite[Section~6]{KLSW},  or graphs $(X,b)$ for which
$$\sum_{x,y \in X} \frac{1}{b(x,y)} < \infty, $$
see \cite[Example~4.6]{GHKLW}. Note that the latter condition implies very large vertex degrees. We note the following.

\begin{theorem}\label{theorem:not canonically compactifiable}
Infinite canonically compactifiable graphs are not strongly resolvable. In particular, locally finite infinite planar graphs of bounded geometry are not canonically compactifiable.
\end{theorem}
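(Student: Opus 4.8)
The plan is to establish the first assertion in contrapositive form, namely: \emph{every infinite strongly resolvable graph fails to be canonically compactifiable}, by producing an unbounded function of finite energy. Granting this, the ``in particular'' part is immediate: the corollary establishing strong resolvability of locally finite planar graphs of bounded geometry shows that such an (infinite) graph is strongly resolvable, so if it were canonically compactifiable the first assertion would forbid strong resolvability, a contradiction. Thus the whole theorem reduces to the contrapositive statement.

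So I would fix an infinite graph $G = (X,b)$ together with a resolving metric $\sigma$, i.e.\ an intrinsic metric with respect to a finite measure $m$ that induces the discrete topology and satisfies ${\rm cap}_m(\{x\}) = 0$ for every $x \in \partial_\sigma X$, and then split into two cases according to whether $(X,\sigma)$ is totally bounded. In the totally bounded case the completion $\overline{X}^\sigma$ is compact; since $X$ is infinite and discrete it is not compact, hence cannot equal $\overline{X}^\sigma$, so $\partial_\sigma X \neq \emptyset$. Choosing any $x \in \partial_\sigma X$, the hypothesis ${\rm cap}_m(\{x\}) = 0$ and Lemma~\ref{capacity-trick} furnish $f \in \mathfrak{D}(G)$ with $\liminf_{y \to x} f(y) = \infty$. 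As $x$ is a boundary point, every neighbourhood of $x$ meets $X$, so $f$ takes arbitrarily large values on $X$ and is unbounded, as required.

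The remaining, harder, case is that $(X,\sigma)$ is not totally bounded. Then there are $\varepsilon > 0$ and infinitely many pairwise $\varepsilon$-separated points $x_1, x_2, \ldots$; with $r = \varepsilon/3$ the balls $B_r^\sigma(x_n)$ are pairwise disjoint. On each I would place the Lipschitz bump $g_n(y) = h_n\,(1 - r^{-1}\sigma(y,x_n))_+$, of height $h_n$ and Lipschitz constant $h_n/r$, vanishing off $B_r^\sigma(x_n)$, and set $f = \sum_n g_n$; this is well defined since the supports are disjoint, and $f(x_n) = h_n$. Lemma~\ref{potential}~(a) bounds each bump by $Q(g_n) \leq 2 r^{-2} h_n^2 \mu_n$ with $\mu_n = m(B_r^\sigma(x_n))$, while a crude vertex-by-vertex comparison gives $(f(x)-f(y))^2 \leq 2\sum_n (g_n(x)-g_n(y))^2$ on every edge, whence $Q(f) \leq 2\sum_n Q(g_n) \leq 4 r^{-2}\sum_n h_n^2 \mu_n$. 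Disjointness yields $\sum_n \mu_n \leq m(X) < \infty$, so a standard summability lemma lets me pick heights $h_n \to \infty$ with $\sum_n h_n^2 \mu_n < \infty$. Then $f$ has finite energy yet is unbounded, finishing this case.

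The main obstacle is exactly this last construction: arranging bumps whose heights diverge while their energies stay summable, and controlling the cross terms in $Q(f)$ from edges joining distinct bumps (handled by the crude per-edge inequality above). Once an unbounded function of finite energy is produced in both cases, $G$ is not canonically compactifiable, and the theorem follows.
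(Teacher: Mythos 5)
Your proof is correct, but it runs in the opposite direction from the paper's and uses different tools. The paper argues directly: assuming canonical compactifiability, it applies the closed graph theorem to the inclusion $H^1(G,m) \subseteq \ell^\infty(X)$ to obtain a constant $C>0$ with $\aV{f}_\infty^2 \leq C(Q(f)+\aV{f}_m^2)$, whence \emph{every} nonempty set, in particular every boundary point of every admissible metric, has capacity at least $1/C$; it then invokes \cite{Puch1} (canonical compactifiability plus an intrinsic metric with respect to a finite measure forces total boundedness) to guarantee that the boundary is nonempty, contradicting strong resolvability. You instead prove the contrapositive by exhibiting an unbounded function of finite energy, splitting on whether $(X,\sigma)$ is totally bounded: in the totally bounded case you use Lemma~\ref{capacity-trick} to turn the capacity-zero hypothesis at a (necessarily existing) boundary point into a finite-energy function with infinite limes inferior there, hence unbounded; in the non-totally-bounded case your disjoint-bump construction (heights $h_n \to \infty$ chosen with $\sum_n h_n^2\, m(B_r^\sigma(x_n)) < \infty$, cross terms controlled by the per-edge inequality $(f(x)-f(y))^2 \leq 2\sum_n (g_n(x)-g_n(y))^2$, each bump estimated via Lemma~\ref{potential}(a)) is in effect a self-contained proof of the contrapositive of the very fact the paper cites from \cite{Puch1}. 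The trade-off: the paper's argument is shorter but leans on functional-analytic machinery and an external reference, while yours is more elementary and self-contained, at the cost of the case analysis and the careful bookkeeping in the bump construction; your route also never needs the uniform capacity lower bound, only the qualitative equivalence of Lemma~\ref{capacity-trick}. All steps check out, including the separation argument ($r=\varepsilon/3$ makes the balls disjoint) and the standard fact that a summable sequence admits weights tending to infinity preserving summability.
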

\begin{proof}
Let $(X,b)$ be an infinite canonically compactifiable graph and let $m$ be a finite measure on $X$. Canonical compactifiability yields $H^1(G,m) \subseteq \ell^\infty(X)$. The closed graph theorem implies the existence of $C > 0$ such that
$$\aV{f}_\infty^2 \leq C (Q(f) + \aV{f}_m^2)$$
for all $f \in H^1(G,m)$. This implies ${\rm cap}_m (U) \geq 1/C$ for any $\emptyset \neq U \subseteq X$ such that points in any metric boundary have a capacity at least $1/C$.

¸It remains to prove that for any intrinsic metric $\sigma$ with respect to $m$, which induces the discrete topology, the space $(X,\sigma)$ is not complete (and hence it has at least one boundary point). According to \cite{Puch1} $(X,b)$ being canonically compactifiable and $\sigma$ being an intrisic metric with respect to a finite measure imply that $(X,\sigma)$ is totally bounded. Hence, $\overline{X}^\sigma$ is compact. But $(X,\sigma)$ is not compact as an infinite  set with the discrete topology. This shows $\partial_\sigma X =   \overline{X}^\sigma \setminus X \neq \emptyset$.

The 'In particular'-part follows from the previous corollary.
\end{proof}

 \bibliographystyle{plain}

\bibliography{literatur}

\end{document}